\newcommand\inverse{{^{-1}}}
\newcommand\ra{\rightarrow}
\newcommand{\iso}{\cong}
\newcommand{\NN}{{\mathbb N}}
\newcommand{\ZZ}{{\mathbb Z}}
\newcommand{\QQ}{{\mathbb Q}}
\newcommand{\RR}{{\mathbb R}}
\DeclareMathOperator{\Aut}{Aut}
\DeclareMathOperator{\diag}{diag}
\DeclareMathOperator{\GL}{GL}
\DeclareMathOperator{\Gal}{Gal}
\DeclareMathOperator{\SL}{SL}
\DeclareMathOperator{\sgn}{sgn}
\DeclareMathOperator{\Mat}{Mat}
\numberwithin{equation}{section}
\newtheorem{thm}[equation]{Theorem}
\newtheorem{lem}[equation]{Lemma}
\newtheorem{cor}[equation]{Corollary}
\newtheorem{prop}[equation]{Proposition}
\newtheorem{conj}[equation]{Conjecture}
\theoremstyle{definition}
\newtheorem{defn}[equation]{Definition}
\newtheorem{exmp}[equation]{Example}
\theoremstyle{remark}
\newtheorem{rem}[equation]{Remark}
\theoremstyle{remark}
\newtheorem{rems}[equation]{Remarks}
\newcommand{\ovl}{\overline}
\subjclass[2010]{51E24, 20E42, 20G15}
\keywords{Spherical buildings, Tits' Centre Conjecture, Geometric Invariant Theory}
\title[The strong Centre Conjecture]
{The strong Centre Conjecture:\\ an invariant theory approach}
\author[M.\  Bate]{Michael Bate}
\address
{Department of Mathematics,
University of York,
York YO10 5DD,
United Kingdom}
\email{michael.bate@york.ac.uk}
\author[B.\ Martin]{Benjamin Martin}
\address
{Department of Mathematics,
University of Auckland,
Private Bag 92019,
Auckland 1142,
New Zealand}
\email{ben.martin@auckland.ac.nz}
\author[G. R\"ohrle]{Gerhard R\"ohrle}
\address
{Fakult\"at f\"ur Mathematik,
Ruhr-Universit\"at Bochum,
D-44780 Bochum, Germany}
\email{gerhard.roehrle@rub.de}
\begin{document}

\begin{abstract}
The aim of this paper is to 
describe an approach to
a strengthened form of J.\ Tits' Centre Conjecture for spherical buildings.
This is accomplished by generalizing a fundamental result of G.R.\ Kempf
from Geometric Invariant Theory and interpreting this generalization
in the context of  spherical buildings.
We are able to recapture the conjecture
entirely in terms of our generalization of Kempf's notion of a state.
We demonstrate the utility of this approach by
proving the Centre Conjecture in some special cases.
\end{abstract}

\maketitle

\setcounter{tocdepth}{1}
\tableofcontents

\section{Introduction}
\label{sec:intro}
The main focus of this paper is the long-standing Centre Conjecture of J.\ Tits about the structure of
convex subsets of spherical buildings.
Roughly speaking, the Centre Conjecture asserts that a convex subset $\Sigma$ of a spherical building $\Delta$ should be
a subbuilding in an appropriate sense, or should contain a natural centre --
a point of $\Sigma$ which is fixed by all automorphisms of $\Delta$ that stabilize $\Sigma$.
See Conjecture \ref{conj:tcc} below for a precise statement and references.
Apart from its independent interest, this conjecture arises in many areas of mathematics,
particularly the theory of reductive linear algebraic groups and their subgroups (\cite{serre2}, \cite{BMR}, \cite{BMR:tits})
and the study of algebraic groups acting on algebraic varieties, which we 
refer to as Geometric Invariant Theory (GIT) (\cite{mumford}, \cite{rousseau}, \cite{GIT}).

In this paper we give an approach to the Centre Conjecture using 
GIT. A key idea is to consider not just 
convex subcomplexes  of $\Delta$ but instead a more general class
of convex \emph{subsets}. We study this strengthened version of 
the Centre Conjecture.

The original formulation of the Centre Conjecture in the 1950s came about
as a possible way to answer a fundamental question
about the subgroup structure of a reductive algebraic group $G$, \cite[Lem.\ 1.2]{tits0},
later answered by Borel and Tits via different means \cite{boreltits2}.
The Centre Conjecture also occurs naturally in GIT, when one is considering
the notion of unstable points in an affine $G$-variety \cite[Ch.\ 2]{mumford}.
In this context, solutions to the Centre Conjecture were found by Kempf \cite{kempf} and Rousseau \cite{rousseau} in the 1970s;
see Remark \ref{rem:getkempf}.
There has also been a recent renewal of interest in the Centre Conjecture from building theorists, 
culminating in a proof of the Centre Conjecture for convex \emph{subcomplexes} of thick spherical buildings.
This proof relies on case-by-case studies by M\"uhlherr and Tits \cite{muhlherrtits},
Leeb and Ramos-Cuevas \cite{lrc}, and Ramos-Cuevas \cite{rc}.

The purpose of this paper is to bring together some of the GIT-methods of Kempf \cite{kempf}, Rousseau \cite{rousseau}
and Hesselink \cite{He} in the context of the Centre Conjecture.
We concentrate in particular on the work of Kempf \cite{kempf}, who never makes explicit
the connection between his work and the Centre Conjecture.
By carefully modifying some of Kempf's key constructions, we are able to significantly extend his results.
In the original context of GIT, this gives new results about instability
for $G$-actions on affine varieties (see Remark \ref{rem:twosettings}).
In the context of spherical buildings and the Centre Conjecture,
our extensions provide a
scheme for attacking the Centre Conjecture for a large class of convex subsets of $\Delta_G$, the spherical building of $G$.
By combining these two points of view,
we are able to apply our methods to provide uniform (rather than case-by-case) proofs of some cases of the Centre Conjecture.
Our methods have the additional advantage of being \emph{constructive} -- not only do we prove the
existence of a centre, but we give a way of finding this centre --
and they also cover new cases of the Centre Conjecture (in general, the subsets coming from GIT are not subcomplexes of $\Delta_G$).
On the other hand, we restrict attention in this paper to finding ``$G$-centres'' for
convex subsets
of $\Delta_G$ -- that is, we restrict attention to those building automorphisms
that come from $G$.
Our main reason for this is to keep the exposition more accessible;
in the final section we briefly indicate how our methods may be extended to cover automorphisms which do not come from $G$.

Our principal result, Theorem \ref{thm:partialTCC}, gives a necessary and
sufficient criterion for the existence of a $G$-centre of a
convex subset of $\Delta_G$
in terms of our generalization of Kempf's notion of a state.
While GIT-methods have been used in the past to approach
the Centre Conjecture, cf.\
\cite{rousseau}, Theorem \ref{thm:partialTCC}
provides a new strategy for an attack on this conjecture.

The paper is laid out as follows.
In Section \ref{sec:prelims} we collect 
a wide range of prerequisites.
Starting with basic properties of algebraic groups and their sets of cocharacters and characters,
we construct the vector and spherical buildings associated to a semisimple group $G$.
This allows us to give a formal statement of Tits' Centre Conjecture \ref{conj:tcc}.
We also provide some basic material on convex cones, Serre's notion of $G$-complete reducibility,
and the notion of instability in GIT.
In Section \ref{sec:quasistates}, we proceed with our generalization of Kempf's work from \cite{kempf}.
This section lies at the heart of the paper, and culminates with our Theorem \ref{thm:kempf2.2},
which generalizes Kempf's key theorem \cite[Thm.\ 2.2]{kempf}.
When translated into the language of spherical buildings in Section \ref{sec:buildings}, our results give an
equivalent formulation of the Centre Conjecture
in terms of our generalization of Kempf's notion of a state: see Theorem \ref{thm:bldgversion},
Theorem \ref{thm:partialTCC}, and Remark \ref{rems:howtoprovetcc}(i).
In particular, Theorem \ref{thm:partialTCC}
gives a complete characterization of the existence of a $G$-centre
of a convex subset of $\Delta_G$.

In Section \ref{sec:gitstcc}, we apply our strengthening of Kempf's
results to situations arising from GIT.
In particular, we show how to recover existing results in the literature (especially from \cite{kempf}, \cite{GIT})
from our constructions; see Remark \ref{rem:getkempf}.
Subsequently, we then apply our methods
to prove the Centre Conjecture in some special cases; see Theorem \ref{thm:precise}, Theorem \ref{thm:tcc},
and Theorem \ref{thm:apt}.
It is rather striking that
these last two results provide applications of our new GIT-methods
to situations which have no apparent connection with GIT.
This supports our view that these
methods provide valuable insight into the Centre Conjecture.

The final section of the
paper briefly discusses various ways in which our results can be extended,
depending on the situation at hand.

\section{Preliminaries}
\label{sec:prelims}


\subsection{Basic notation}
\label{subsec:notn}
Throughout the paper (except in part of Section \ref{sec:extra}), $G$ denotes a 
semisimple linear algebraic group defined over an algebraically closed field $k$.  Many of our results hold for an arbitrary reductive algebraic group $G$ (see Section \ref{sub:redgps}).
By a subgroup of $G$ we mean a closed subgroup.  Let $H$ be a subgroup of $G$.
We denote by $R_u(H)$ the unipotent radical of $H$.

Let $T$ be a maximal torus of $G$ and let
$\Psi(G,T)$ denote the set of roots of $G$ with respect to $T$.
For $\alpha \in \Psi(G,T)$, we denote the corresponding root subgroup
of $G$ by $U_\alpha$. For a $T$-stable subgroup $H$ of $G$, we denote the
set of roots of $H$ with respect to $T$ by
$\Psi(H,T) := \{\alpha \in \Psi(G,T) \mid U_\alpha \subseteq H\}$.

Whenever a group $\Gamma$ acts on a set $\Omega$, we let $C_\Gamma(\omega)$ denote the stabilizer in $\Gamma$ of $\omega\in \Omega$.
If $\Sigma$ is a subset of $\Omega$, we let $N_\Gamma(\Sigma)$ denote the subgroup of elements of $\Gamma$ that stabilize $\Sigma$ setwise.

\subsection{Cocharacters and parabolic subgroups}
\label{sub:cochar}

For any linear algebraic group $H$, we let $Y_H$, $X_H$ denote the
sets of cocharacters and characters of $H$, respectively.
When $H=G$, we drop the suffix and write $Y=Y_G$.
We write $X$ for the \emph{disjoint} union of the $X_T$, where $T$ runs over the maximal tori of $G$.
If $H$ is a torus, then $Y_H$ and $X_H$ are abelian groups which we write additively:
e.g., if $\lambda, \mu \in Y_H$ and $a \in k^*$, then
$(\lambda+\mu)(a) := \lambda(a)\mu(a)$.
For any torus $H$, we denote by $\langle \ {,}\  \rangle$ the usual
pairing $Y_H\times X_H\ra \ZZ$.
We have a left action of $G$ on $Y$ given by $(g,\lambda)\mapsto g\cdot\lambda$,
where $(g\cdot \lambda)(a):=g\lambda(a)g^{-1}$ for $a \in k^*$.
Moreover, there is a left action of $G$ on $X$ given by $(g,\beta)\mapsto g_!\beta$,
where $(g_!\beta)(x)= \beta(g^{-1}xg)$ for $x \in G$.
Note that if $H$ is a subgroup of $G$, $\lambda \in Y_H$, $\beta \in X_H$ and
$g\in G$, then
$g\cdot \lambda \in Y_{gHg^{-1}}$ and
$g_!\beta \in X_{gHg^{-1}}$.
If $H$ is a torus of $G$, $\lambda\in Y_H$, $\beta\in X_H$, and $g\in G$, we have
\begin{equation}
\label{eqn:ipinvce}
 \langle g\cdot\lambda,g_!\beta \rangle= \langle \lambda,\beta\rangle.
\end{equation}

We recall \cite[Def.\ 4.1]{GIT}.

\begin{defn}
\label{def:norm}
A \emph{norm} on $Y$ is a
non-negative real-valued function $\left\|\,\right\|$ on $Y$
such that
\begin{itemize}
\item[(a)] $\left\| g \cdot \lambda \,\right\| = \left\| \lambda \,\right\|$
for any $g \in G$ and any $\lambda \in Y$;
\item[(b)] for any maximal torus $T$ of $G$, there is a positive definite
integer-valued form $(\ {,}\ )$ on $Y_T$ such that
$(\lambda, \lambda) = \left\| \lambda \,\right\|^2$ for any $\lambda \in Y_T$.
\end{itemize}
\end{defn}

Such norms always exist, as follows from \cite[Lem.\ 2.1]{kempf}.
From now on, we fix a norm $\left\|\,\right\|$ on $Y$.

We now extend the notion of a cocharacter.
For the rest of the paper, whenever it is not specified,
the letter $K$ stands for either one of $\QQ$ or $\RR$.
Let $H$ be a subgroup of $G$.
Define $Y_H(\QQ)$ to be the quotient of $Y_H\times \NN_0$ by the equivalence relation:
$(\lambda,m)\equiv (\mu,n)$ if $n\lambda=m\mu$.
In particular, $Y_H(\QQ)\iso Y_H\otimes_\ZZ \QQ$ if $H$ is a torus.
For any maximal torus $T$ of $G$, we define $Y_T(\RR)= Y_T(\QQ)\otimes_\QQ \RR$.
Given $\lambda,\mu\in Y_T(K)$, we denote by $[\lambda,\mu]$ the line segment
$\{a\lambda+ b\mu\mid a,b \in K, a,b\geq 0, a+b=1\}$ between $\lambda$ and $\mu$ in $Y_T(K)$.
It is clear from the definition that this line segment does not depend on
the choice of $T$ with $\lambda,\mu \in Y_T(K)$.

Let $T$ be a maximal torus of $G$, and let $\Psi(G,T)$
be the set of roots of $G$ with respect to $T$.
If $\lambda\in Y_T(\RR)$,
then we define $P_\lambda$ to be the subgroup
generated by $T$ and the root groups $U_\alpha$,
where $\alpha$ ranges over all roots in $\Psi(G,T)$ such that
$\langle \lambda,\alpha\rangle\geq 0$; note that $P_\lambda$
is a parabolic subgroup of $G$, \cite[Prop.\ 8.4.5]{spr2}.
A Levi decomposition of $P_\lambda$ is given by
$P_\lambda = L_\lambda R_u(P_\lambda)$, where
$L_\lambda = C_G(\lambda)$ is the Levi subgroup of $P_\lambda$
generated by $T$ and the root groups $U_\alpha$ with
$\langle\lambda,\alpha\rangle = 0$.
The unipotent radical $R_u(P_\lambda)$ is generated by the root groups
$U_\alpha$,  where $\alpha$ ranges over all roots such that
$\langle \lambda,\alpha\rangle> 0$.
If $P$ is a parabolic subgroup of $G$ and $L$ is a Levi subgroup of $P$,
then there exists $\nu\in Y$ such that $P= P_\nu$ and $L= L_\nu$.

The space $Y(\QQ) = Y_G(\QQ)$ is made by glueing pieces $Y_T(\QQ)$.
We now construct a space $Y(\RR)$ from pieces $Y_T(\RR)$ in a similar way.
If $g\in G$ and $T$ is a maximal torus of $G$, then $g$ gives rise to a $\QQ$-linear map from $Y_T(\QQ)$ to $Y_{gTg^{-1}}(\QQ)$.
Hence $g$ gives rise to an $\RR$-linear map from $Y_T(\RR)$ to $Y_{gTg^{-1}}(\RR)$.
It follows that $G$ acts on the disjoint union $\bigcup_T Y_T(\RR)$.
Now we identify $\nu\in Y_T(\RR)$ with $x\cdot\nu\in Y_{xTx^{-1}}(\RR)$, for $x\in L_\nu$.
Then $Y(\RR)$ is the resulting quotient space.  Given $\lambda\in Y(\RR)$,
we define $P_\lambda$ and $L_\lambda$ in the obvious way.
We identify $Y(\QQ)$ with a subset of $Y(\RR)$; this embedding is
equivariant with respect to the actions of $G$ on $Y(\QQ)$ and $Y(\RR)$.

We also define $X(\QQ)$ and $X(\RR)$ as the disjoint union of pieces
$X_T(\QQ)$ and $X_T(\RR)$ as $T$ runs over the maximal tori of $G$.
The left action of $G$ on $Y$ (resp.\ $X$) extends to a left
action of $G$ on $Y(K)$ (resp.\ $X(K)$);
the pairings $\langle \ {,}\  \rangle$ between $Y_T$ and $X_T$
extend to give non-degenerate pairings
$Y_T(K) \times X_T(K) \to K$ for each maximal torus $T$ of $G$.
The norm $\left\|\,\right\|$ on $Y$ comes from integer-valued bilinear
forms on $Y_T$ for each maximal torus $T$ of $G$, by Definition \ref{def:norm}(b);
since each of these forms extends to a $K$-valued bilinear form on $Y_T(K)$, the norm
on $Y$ extends to a $G$-invariant norm on $Y(K)$, which we also denote by $\left\|\,\right\|$.
In particular, for any maximal torus $T$ of $G$, the subset
$Y_T(\RR)$ of $Y(\RR)$ is a real normed vector space,
and hence carries a natural topology coming from the norm.
We endow $Y_T(\QQ)$ with the relative topology coming from
the inclusion $Y_T(\QQ) \subset Y_T(\RR)$.


\begin{lem}
\label{lem:semicty}
Recall that $K = \QQ$ or $\RR$.
\begin{itemize}
\item[(i)] For any $\alpha \in X_T(K)$, the set of $\lambda \in Y_T(K)$ such that
$\langle \lambda,\alpha\rangle >0$ is open in $Y_T(K)$.
\item[(ii)] For any $\lambda\in Y_T(K)$, there is an open neighbourhood
$U$ of $\lambda$ in $Y_T(K)$ such that for any $\mu\in U$, we have $P_\mu\subseteq P_\lambda$.
\end{itemize}
\end{lem}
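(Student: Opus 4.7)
The plan is to view both statements as consequences of the continuity of the pairing functionals $\langle \,\cdot\,, \alpha \rangle \colon Y_T(K) \to K$ on the normed vector space $Y_T(\RR)$ (with $Y_T(\QQ)$ carrying the subspace topology).

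For part (i), I would argue as follows. By Definition \ref{def:norm}(b) and the discussion preceding the lemma, $Y_T(\RR)$ is a finite-dimensional real normed vector space and the pairing with $\alpha \in X_T(K) \subseteq X_T(\RR)$ extends to an $\RR$-linear functional $\langle\,\cdot\,,\alpha\rangle \colon Y_T(\RR) \to \RR$. Any linear functional on a finite-dimensional normed space is continuous, so the preimage of the open half-line $(0,\infty) \subset \RR$ is open in $Y_T(\RR)$. Intersecting with $Y_T(\QQ)$ handles the case $K = \QQ$, by definition of the relative topology. This gives (i).

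For part (ii), the idea is that $P_\mu \subseteq P_\lambda$ is equivalent to the implication: for every $\alpha \in \Psi(G,T)$, if $\langle \lambda, \alpha \rangle < 0$ then $\langle \mu, \alpha \rangle < 0$ (since $U_\alpha \subseteq P_\mu$ iff $\langle \mu, \alpha \rangle \geq 0$, and $U_\alpha \subseteq P_\lambda$ iff $\langle \lambda, \alpha \rangle \geq 0$, while $T$ lies in both). Let
\[
\Psi^- := \{\alpha \in \Psi(G,T) \mid \langle \lambda, \alpha \rangle < 0\}.
\]
For each $\alpha \in \Psi^-$, part (i) applied to $-\alpha \in X_T(K)$ gives an open neighbourhood $U_\alpha'$ of $\lambda$ in $Y_T(K)$ on which $\langle \,\cdot\,, \alpha\rangle < 0$. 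Since $\Psi(G,T)$ is finite, the intersection
\[
U := \bigcap_{\alpha \in \Psi^-} U_\alpha'
\]
is an open neighbourhood of $\lambda$ in $Y_T(K)$. For $\mu \in U$ and any root $\alpha$, if $\langle \mu, \alpha \rangle \geq 0$ then $\alpha \notin \Psi^-$, so $\langle \lambda, \alpha \rangle \geq 0$; thus every root subgroup generating $P_\mu$ lies in $P_\lambda$, and $T \subseteq P_\lambda$, so $P_\mu \subseteq P_\lambda$.

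Neither step presents a real obstacle: the only substantive ingredient is the continuity of linear functionals on a finite-dimensional normed space, which is immediate from Definition \ref{def:norm}(b). The minor point to be careful about is making sure the argument for (i) is phrased so that it applies equally to $K = \QQ$ and $K = \RR$, which is handled by passing through $Y_T(\RR)$ and then using the subspace topology on $Y_T(\QQ)$.
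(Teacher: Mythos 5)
Your proof is correct and follows essentially the same approach as the paper. The one small difference is in how you conclude $P_\mu \subseteq P_\lambda$ in part (ii): the paper first observes that $R_u(P_\mu) \supseteq R_u(P_\lambda)$ and then invokes the standard fact that containment of unipotent radicals reverses containment of parabolics, whereas you verify directly that every generator of $P_\mu$ (namely $T$ and the root groups $U_\alpha$ with $\langle\mu,\alpha\rangle\geq 0$) already lies in $P_\lambda$; your set $U$ is the same as the paper's (your condition on $\Psi^-$ is the contrapositive, after replacing $\alpha$ by $-\alpha$, of the condition the paper uses), and your conclusion is marginally more self-contained.
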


\begin{proof}
(i). This is clear: $\alpha$ defines an open half-space in $Y_T(K)$.

(ii). Choose $U$ to be the set of $\mu \in Y_T(K)$ such that whenever $\langle \lambda,\alpha\rangle >0 $
for a root $\alpha$, we have $\langle \mu,\alpha\rangle >0$ also.
By (i), $U$ is a finite intersection of open sets, so is open.
For $\mu \in U$, we then have $R_u(P_\mu) \supseteq R_u(P_\lambda)$.
It is a standard fact that this implies $P_\mu \subseteq P_\lambda$.
\end{proof}

\subsection{Convex cones}\label{sec:cvxcones}
Let $E$ be a finite-dimensional vector space over $K=\QQ$ or $\RR$;
in the former case we give $E$ the relative topology it
inherits from its embedding in $E\otimes_\QQ \RR$.
A subset $C$ of $E$ is called a \emph{cone} if it is closed under multiplication by
non-negative elements of $K$.
We recall some standard facts about cones;
for more detail, see for example the appendix and additional references in \cite{oda}.
A \emph{convex cone} in $E$ is a cone in $E$ which is also a convex subset.
Let $D\subseteq E^*$, where $E^*$ denotes the dual of $E$.
The set $\{e\in E \mid \beta(e)\geq 0 \mbox{ for all } \beta\in D\}$
is a closed convex cone in $E$;
we call this the \emph{cone defined by $D$}.
A convex cone $C$ is said to be \emph{polyhedral} if it is the cone defined
by some finite subset of $E^*$.
If $K=\QQ$ and $D$ is a finite subset of $E^*$,
then the subset of $E\otimes_\QQ \RR$ defined by $D$ is the closure of the subset of $E$ defined by $D$.

By the Minkowski-Weyl Theorem \cite{charnes},
a convex cone $C$ is polyhedral if and only if it is
\emph{finitely generated}: that is, if and only if
there exist $e_1,\ldots,e_s\in E$ for some $s$ such that
$C=\{c_1e_1+\cdots +c_se_s \mid c_1,\ldots,c_s\geq 0\}$
(we say that $C$ is the
\emph{cone generated by $e_1,\ldots,e_s$}).
In particular, a finitely generated convex cone is closed.
If $K=\QQ$ and $C$ is the finitely generated convex cone in $E$ generated by $e_1,\ldots,e_s\in E$,
then the cone in $E\otimes_\QQ \RR$ generated by $e_1,\ldots,e_s$ is the closure of $C$.  Likewise, if $D\subseteq E^*$ and $C$ is the finitely generated convex cone defined by $D$, then the cone in $E\otimes_\QQ \RR$ defined by $D$ is the closure of $C$.


\subsection{Vector buildings and spherical buildings}
\label{sub:buildings}
We derive our main results in this paper for subsets of $Y(K)$,
but we also wish to translate them into the language of spherical buildings.
In order to do this, we need to recall how to construct buildings from $Y(K)$.
Instead of moving straight from $Y(K)$ to the associated spherical building of $G$, we first pass to
the vector building and then identify the spherical building of $G$ as a subset of this vector building.
The additional structure afforded by the vector building makes the exposition more transparent.

First, define an equivalence relation on $Y(K)$ by $\lambda\equiv \mu$ if $\mu= u\cdot \lambda$ for some $u\in R_u(P_\lambda)$.
We let $V(K)= V_G(K)$ be the set of equivalence classes and let $\varphi\colon Y(K)\ra V(K)$ be the canonical projection
(to ease notation, we use $\varphi$ for the projection in both cases $K=\QQ$ and $K=\RR$).
We call $V(\RR)$ and $V(\QQ)$ the \emph{vector building of $G$} and \emph{rational vector building of $G$}, respectively,
see \cite[Sec.\ II, Sec.\ IV]{rousseau};
we have an obvious $G$-equivariant embedding from $V(\QQ)$ to $V(\RR)$.
Since the norm $\left\|\,\right\|$ on $Y(K)$ is $G$-invariant, it descends to give a real-valued function on $V(K)$, which we also call a norm and denote by $\left\|\,\right\|$.

Given a maximal torus $T$ of $G$, we set $V_T(K):= \varphi(Y_T(K))$.
We call the subsets $V_T(K)$ the \emph{apartments} of $V(K)$.
The restriction of $\varphi$ to $Y_T(K)$ is a bijection, so we can regard $V_T(K)$ as a vector space over $K$.
Any two points of $V(K)$ lie in a common apartment, because any two parabolic subgroups of $G$ contain a common maximal torus.
Because of this, we can put a metric $d$ on
$V(K)$ by defining $d(x,y) = \left\|x-y\right\|$ to be the Euclidean distance between $x$ and $y$ in any apartment that contains them both.
Similarly, we let $[x,y]$ denote the line segment between $x$ and $y$ in any apartment containing them both.
Neither of these constructions depends on the choice of apartment (\cite[Sec.\ II]{rousseau}).  Likewise, if $a,b\in K$, then the linear combination $ax+by$ does not depend on the choice of apartment.
By \cite[Prop.\ 2.3]{rousseau}, $V(\RR)$ is a complete geodesic metric space; it is the completion of the space $V(\QQ)$
with respect to the norm.

If $W\subseteq V(K)$ and $T$ is a maximal torus of $G$, then we define $W_T:= W\cap V_T(K)$.
We say that $W$ is \emph{convex} if $W$ contains the line segment $[x,y]$ for all $x,y \in W$.
If $W$ is convex, then $W_T$ is a convex subset of $V_T(K)$ for every maximal torus $T$ of $G$, and vice versa.

Now the \emph{spherical Tits building $\Delta(\RR) = \Delta_G(\RR)$ of $G$} can be defined simply as the unit sphere in $V(\RR)$,
and the \emph{rational spherical building $\Delta(\QQ) = \Delta_G(\QQ)$ of $G$} is the projection of $V(\QQ)\setminus\{0\}$ onto $\Delta(\RR)$,
\cite[IV]{rousseau}, \cite[Ch.\ 2,$\S$2]{mumford}.
We have an obvious $G$-equivariant embedding from $V(\QQ)$ to $V(\RR)$.
Since the norm on $V(K)$ is $G$-invariant, $\Delta(K)$ is a $G$-invariant subspace of $V(K)$.
It is clear that $\Delta(K)$ is a closed subspace of $V(K)$ and that
the metric on $V(K)$ restricts to give a metric on $\Delta(K)$,
\cite[II]{rousseau}; since we are working with vectors of norm $1$ in $V(K)$,
this metric gives the same topology on $\Delta(K)$ as that coming from the angular metric defined in \cite[Ch.\ 2, $\S$2, p.\ 59]{mumford}.
In particular, $\Delta(\RR)$ is the completion of $\Delta(\QQ)$.
There is a natural notion of opposition of points in $\Delta(K)$ inherited from $V(K)$;
$x$ and $y$ are \emph{opposite} if and only if $d(x,y) = 2$.
Given any two points in $\Delta(K)$ that are not opposite, there is a unique geodesic between them;
this is the projection of the corresponding line segment in $V(K)$ onto the unit sphere.
We define the \emph{apartments} of $\Delta(K)$ to be the intersections of the apartments of $V(K)$ with $\Delta(K)$;
we set $\Delta_T(K):= \Delta(K)\cap V_T(K)$.
Each apartment $\Delta_T(K)$ is the unit sphere centred at the origin in the Euclidean space $V_T(K)$.
We denote the projection map from $V(K)\setminus\{0\}$ to $\Delta(K)$ by $\xi$ and we
define
$$
\zeta\colon Y(K)\setminus\{0\}\ra \Delta(K)
$$
to be the composition $\xi\circ \varphi$
(again, here we use the same letter for these maps in both cases $K=\QQ$ and $K=\RR$).

If $\Sigma\subseteq \Delta(K)$, then we define $\Sigma_T:= \Sigma\cap \Delta_T(K)$.
We say that $\Sigma$ is \emph{convex} if whenever $x,y\in \Sigma$ are not opposite, then $\Sigma$
contains the geodesic between $x$ and $y$, \cite[\S 2.1]{serre2}.
It follows that $\Sigma$ is convex if and only if $\Sigma_T$ is a convex subset of $\Delta_T(K)$ for every maximal torus $T$ of $G$.

The spherical building $\Delta(K)$ has a simplicial structure.
It is the geometric realization over $K$ of an abstract building, whose
simplices correspond to the parabolic subgroups of $G$ (ordered by reverse inclusion).
In our notation, given a proper parabolic subgroup $P$ of $G$, we can recover the corresponding
topological simplex
as $\sigma_P= \{\zeta(\lambda)\mid \lambda\in Y(K), P\subseteq P_\lambda\}$.
Since $\left\|\,\right\|$ is $G$-invariant, the action of $G$ on $V(K)$ restricts to give an action of $G$ on $\Delta(K)$ by isometries;
this action preserves the simplicial structure.  Note that $\zeta, \xi$ and $\varphi$ are $G$-equivariant.
For any $\lambda \in Y(K)\setminus\{0\}$, the stabilizer in $G$ of
$\varphi(\lambda) \in V(K)$ and the stabilizer in $G$ of $\zeta(\lambda) \in \Delta(K)$ are both equal to
$P_\lambda$.

\subsection{Cones in $Y(K)$}
In this paper, we wish to move back and forth between $Y(K)$ and the building $\Delta(K)$, using the map $\zeta\colon Y(K)\setminus\{0\}\ra \Delta(K)$.
In particular, we study what happens to convex subsets of spherical buildings when we pull them back to $Y(K)$.
This leads to the following basic definitions:

\begin{defn}
\label{def:convex-saturated}
Given a subset $C$ of $Y(K)$ and a maximal torus $T$ of $G$, we set $C_T := C\cap Y_T(K)$.
\begin{itemize}
\item[(i)] We say that $C$ is \emph{convex} if $C_T$ is a
convex subset of $Y_T(K)$ for every maximal torus $T$ of $G$.
\item[(ii)]
We say that $C$ is \emph{saturated}
if whenever $\lambda \in C$, then $u\cdot \lambda \in C$ for all
$u \in R_u(P_\lambda)$.
\item[(iii)]
We say that $C$ is a \emph{cone} if $C_T$ is a cone
for every maximal torus $T$ of $G$.
In this case we say that $C$ is \emph{polyhedral}
if every $C_T$ is polyhedral,
and that $C$ is of \emph{finite type} if for every $T$,
the set $\{g\cdot(C_{g\inverse Tg})\mid g\in G\}$ is finite.
\end{itemize}
\end{defn}

\begin{defn}
\label{def:polyhedralsubset}
Let $\Sigma$ be a  convex subset of $\Delta(K)$
and let $C= \zeta^{-1}(\Sigma) \cup \{0\}$.
From the definition of $\zeta$, it is clear that $C$ is a saturated cone in $Y(K)$.
We say that $\Sigma$ is \emph{polyhedral}
if $C$ is polyhedral, and in this case we
say $\Sigma$ is \emph{of finite type} if $C$ is of finite type.
\end{defn}


The next lemma shows how these definitions allow us to translate back and forth between $Y(K)$ and $\Delta(K)$.

\begin{lem}
\label{lem:Ycones}
Let $\Sigma$ be a subset of $\Delta(K)$ and let $C$ be any saturated cone in $Y(K)$ such that $\zeta(C\setminus\{0\}) = \Sigma$.
Then the following hold:
 \begin{itemize}
  \item[(i)] $C = \zeta^{-1}(\Sigma) \cup \{0\}$;
  \item[(ii)] $\Sigma$ is convex if and only if $C$ is convex;
  \item[(iii)] $\Sigma$ is polyhedral if and only if $C$ is polyhedral;
  \item[(iv)] $\Sigma$ is of finite type if and only if $C$ is of finite type.
 \end{itemize}
\end{lem}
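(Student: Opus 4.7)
The plan is to prove (i) directly, derive (iii) and (iv) as formal consequences of (i), and then handle (ii) by translating between line segments in apartments of $Y(K)$ and geodesics in $\Delta(K)$.

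For (i), the inclusion $C\subseteq \zeta^{-1}(\Sigma)\cup\{0\}$ is immediate since $0\in C$ (the cone is nonempty) and any nonzero $\lambda\in C$ satisfies $\zeta(\lambda)\in\zeta(C\setminus\{0\})=\Sigma$. For the reverse inclusion, take $\mu\in\zeta^{-1}(\Sigma)$ and pick $\lambda\in C\setminus\{0\}$ with $\zeta(\lambda)=\zeta(\mu)$. Since any two parabolics of $G$ contain a common maximal torus, there is a torus $T$ with $\varphi(\lambda),\varphi(\mu)\in V_T(K)$, and using that $\varphi$ restricts to a bijection $Y_T(K)\to V_T(K)$ we may choose $\lambda',\mu'\in Y_T(K)$ with $\varphi(\lambda')=\varphi(\lambda)$ and $\varphi(\mu')=\varphi(\mu)$. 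Saturation of $C$ (invariance under the symmetric $R_u$-equivalence relation defining $\varphi$) yields $\lambda'\in C_T$, and will likewise give $\mu\in C$ once we show $\mu'\in C$. Now $\xi(\varphi(\mu'))=\zeta(\mu)=\zeta(\lambda)=\xi(\varphi(\lambda'))$ with both arguments nonzero in the $K$-vector space $V_T(K)$, so $\varphi(\mu')=a\varphi(\lambda')$ for some $a>0$; since $V_T(K)$ is a $K$-vector space, $a\in K$, and then $\mu'=a\lambda'$ in $Y_T(K)$ by $K$-linearity of $\varphi|_{Y_T(K)}$. The cone property of $C_T$ gives $\mu'\in C_T$, as required.

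Parts (iii) and (iv) are then formal: by (i), the cone $C$ we were given coincides with $\zeta^{-1}(\Sigma)\cup\{0\}$, which is precisely the cone used in Definition \ref{def:polyhedralsubset} to test polyhedrality and finite type of $\Sigma$. For (ii), assume first that $\Sigma$ is convex, fix a maximal torus $T$, and take $\lambda,\mu\in C_T$ with $s,t\in K_{\geq 0}$; we show $s\lambda+t\mu\in C_T$. After reducing to $s,t>0$, $\lambda,\mu\neq 0$ and $s\lambda+t\mu\neq 0$ (the exceptional cases being handled by the cone property alone), we perform a small case analysis on the positions of $\zeta(\lambda),\zeta(\mu)$ in $\Delta_T(K)$: if they coincide or are antipodal, $s\lambda+t\mu$ collapses in $Y_T(K)$ to a non-negative scalar multiple of $\lambda$ or $\mu$; otherwise $\zeta(s\lambda+t\mu)$ lies on the unique geodesic from $\zeta(\lambda)$ to $\zeta(\mu)$ in $\Delta_T(K)$, hence in $\Sigma$ by convexity. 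In every case $s\lambda+t\mu\in C$ via (i), and membership in $Y_T(K)$ then forces it into $C_T$. Conversely, if $C$ is convex, any non-opposite $x,y\in\Sigma$ lift (after moving into a common apartment $Y_T(K)$ via saturation) to $\lambda,\mu\in C_T$, and any point $z$ on the geodesic from $x$ to $y$ is of the form $\zeta(s\lambda+t\mu)$ for some $s,t>0$; convexity of $C_T$ puts $s\lambda+t\mu\in C_T$, whence $z\in\zeta(C\setminus\{0\})=\Sigma$.

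The main obstacle is (i), where the delicate point is to combine the saturation condition with the cone structure in a way that converts the identity $\zeta(\lambda)=\zeta(\mu)$ into an actual relation $\mu'=a\lambda'$ inside a single $C_T$; this rests on the scalar $a$ genuinely lying in $K$ (which needs $V_T(K)$ as a $K$-vector space) and on the $R_u$-equivalence being symmetric so saturation works in both directions. Once (i) is in hand, (iii) and (iv) are immediate from the definitions, and (ii) is a mechanical case analysis in a single apartment.
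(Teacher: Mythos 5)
Your argument is correct and follows the same route as the paper's proof: unpack $\zeta=\xi\circ\varphi$ and use saturation plus the cone property to transfer membership, then read (ii)--(iv) off the definitions once (i) is established. The paper's own proof is just far more terse (one sentence for (ii)--(iv), and for (i) it compresses your apartment argument into the single assertion that $\zeta(\mu)=\zeta(\lambda)$ forces $\lambda$ to be a positive multiple of $u\cdot\mu$ for some $u\in R_u(P_\mu)$); you have simply filled in the details.
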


\begin{proof}
(i). Since $\zeta(C\setminus\{0\}) = \Sigma$, we have $C \subseteq \zeta^{-1}(\Sigma) \cup \{0\}$.
On the other hand, suppose $\lambda \in \zeta^{-1}(\Sigma)$.
Then there exists $\mu \in C$ such that $\zeta(\mu) = \zeta(\lambda)$.
By definition of $\zeta$, this implies that there exists $u \in R_u(P_\mu)$ such that
$\lambda$ is a positive multiple of $u\cdot\mu$.
But $C$ is a saturated cone, so we must have $\lambda \in C$.

Now (ii), (iii) and (iv) follow from the definitions.
\end{proof}

We now show that subcomplexes of the building fit into this framework.

\begin{lem}
\label{lem:cxcvx}
Let $\Sigma$ be a convex subcomplex of $\Delta(K)$.
Then $\Sigma$ is closed, polyhedral, and of finite type.
\end{lem}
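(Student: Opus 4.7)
The plan is to reduce each assertion to an analysis within a single apartment and then lift it to the building via Lemma \ref{lem:Ycones}. Fix a maximal torus $T$ of $G$. The parabolic subgroups of $G$ containing $T$ correspond bijectively to subsets of the simple roots (relative to any Borel containing $T$), so there are only finitely many of them, and $\Delta_T(K)$ carries a finite simplicial structure --- the Coxeter complex of $G$ with respect to $T$. The intersection $\Sigma_T = \Sigma \cap \Delta_T(K)$ is therefore a subcomplex of this finite complex, hence a finite union of closed spherical simplices in $\Delta_T(K)$, and in particular $\Sigma_T$ is closed in $\Delta_T(K)$.

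For polyhedrality, each closed simplex $\sigma_P \cap \Delta_T(K)$ with $P \supseteq T$ pulls back under $\zeta$ to the polyhedral cone $\{\lambda \in Y_T(K) : \langle \lambda, \alpha\rangle \geq 0 \text{ for all } \alpha \in \Psi(P,T)\}$, so the cone $C_T = (\zeta^{-1}(\Sigma) \cup \{0\}) \cap Y_T(K)$ is a finite union of polyhedral cones sharing the apex $0$. By convexity of $\Sigma$ and Lemma \ref{lem:Ycones}(ii), $C_T$ is convex. A convex union of finitely many polyhedral cones sharing an apex is itself polyhedral: taking $v_1, \ldots, v_N$ to be the union of finite generating sets of the constituent cones, the convex cone they generate is contained in $C_T$ by convexity and trivially contains $C_T$, so $C_T$ is finitely generated and polyhedral by the Minkowski--Weyl theorem. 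Lemma \ref{lem:Ycones}(iii) then gives that $\Sigma$ is polyhedral.

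For finite type, $G$-equivariance of $\zeta$ identifies $g \cdot C_{g^{-1}Tg}$ with the cone in $Y_T(K)$ over $(g\cdot \Sigma)_T$. Since $G$ acts on $\Delta(K)$ by simplicial automorphisms, $g \cdot \Sigma$ is again a subcomplex, and so $(g \cdot \Sigma)_T$ is a subcomplex of the finite Coxeter complex on $\Delta_T(K)$. There are only finitely many such subcomplexes, so only finitely many $g \cdot C_{g^{-1}Tg}$ arise as $g$ varies over $G$. Lemma \ref{lem:Ycones}(iv) then gives that $\Sigma$ is of finite type.

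For closedness of $\Sigma$ in $\Delta(K)$, suppose $x_n \to x$ with $x_n \in \Sigma$. Write $\mathcal{F} = \{P : \sigma_P \subseteq \Sigma\}$; the subcomplex hypothesis says $\mathcal{F}$ is upward closed under parabolic inclusion, since the faces of $\sigma_P$ are the simplices $\sigma_Q$ with $Q \supseteq P$. For $n$ large, $x_n$ lies in the open star of $x$, so the stabilizers satisfy $P_{x_n} \subseteq P_x$; this is the building-level semicontinuity of stabilizers, analogous to Lemma \ref{lem:semicty}(ii). Since $P_{x_n} \in \mathcal{F}$, upward closure gives $P_x \in \mathcal{F}$, whence $x \in \sigma_{P_x} \subseteq \Sigma$. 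The main obstacle in this argument is verifying that the open star of $x$ really is a neighborhood of $x$ in the metric topology of $\Delta(K)$: this can fail in arbitrary infinite simplicial complexes but holds here because spherical buildings are CAT(1) spaces whose apartments carry finite Coxeter-type simplicial structures, so the link of each point has uniformly positive diameter and foreign simplices cannot accumulate at $x$.
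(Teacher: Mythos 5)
Your proof is correct and follows essentially the same strategy as the paper's: restrict to an apartment, observe that $C_T$ is a convex finite union of polyhedral cones (one for each parabolic $P\supseteq T$ with $\sigma_P\subseteq\Sigma$) and hence polyhedral by Minkowski--Weyl, and deduce finite type from the finiteness of the set of parabolic subgroups containing a given maximal torus. The one real difference is that you attempt to prove closedness, which the paper simply cites as standard; your argument there is sound in outline but leans on the informal assertion that the open star of the carrier simplex of $x$ is a metric neighborhood of $x$ in $\Delta(K)$ --- this does hold, ultimately because $G$ acts transitively on apartments and so the distance from $x$ to the complement of its open star within any apartment containing it is bounded below by a constant depending only on the position of $x$ in its carrier simplex, but you gesture at it rather than pinning it down.
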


\begin{proof}
It is standard that a convex subcomplex is closed in $\Delta(K)$.
Let $C = \zeta^{-1}(\Sigma) \cup \{0\}$;
then $C$ is a saturated convex cone in $Y(K)$.
Let $P$ be a parabolic subgroup of $G$ such that
$\sigma_P \in \Sigma$.
Let $T$ be a maximal torus of $P$ and let
$B$ be a Borel subgroup of $P$ with $B\supseteq T$.
Let $\Pi=\{\alpha_1,\ldots,\alpha_r\}$ be the
base for the root system $\Psi(G,T)$ corresponding to $B$.
Then $\Pi$ is a basis for the space $X_T(K)$.
Let $\{\lambda_1,\ldots,\lambda_r\}$ denote the
corresponding dual basis of $Y_T(K)$,
so that $\langle \lambda_i,\alpha_j\rangle =\delta_{ij}$
for $1\leq i\leq r$.

Now there exists a subset $\Pi'\subseteq \Pi$
such that $P$ is of the form $P(\Pi')$ in the notation of \cite[IV, 14.17]{Bo}
(this means that the Levi subgroup of $P$ containing $T$ has root system
spanned by the subset $\Pi'$, and the unipotent radical of $P$
contains all the root groups $U_\alpha$ with $\alpha \in \Pi\setminus\Pi'$).
Now for any $\lambda\in Y_T(K)$, $P\subseteq P_\lambda$
if and only if $\langle \lambda,\alpha_i\rangle=0$
for $\alpha_i\in \Pi'$ and $\langle \lambda,\alpha_i\rangle\geq 0$ for
$\alpha_i\not\in \Pi'$.

Let $C^P = \zeta\inverse(\sigma_P) \cup \{0\}$.
Then $C^P_T = C^P \cap Y_T(K)$ consists of all the
$\lambda \in Y_T(K)$ such that $P \subseteq P_\lambda$.
We claim that $C^P_T$ is the cone in $Y_T(K)$ generated by the
set $\{\lambda_j\mid \alpha_j \notin \Pi'\}$.
This follows easily from the characterisation of parabolic
subgroups containing $P$ given in the previous paragraph.
This shows that $C^P_T$ is a finitely generated
cone in $Y_T(K)$ for every maximal torus $T$ of $G$ contained in $P$.

Now suppose $T$ is a maximal torus of $G$ not contained in $P$,
and let $Q$ denote the subgroup of $G$ generated by $P$ and $T$.
Since $Q$ contains $P$, $Q$ is also a parabolic subgroup of $G$,
and $C^P_T = C^P\cap Y_T(K) = C^Q_T$ is a finitely generated cone
in $Y_T(K)$ by the above arguments applied to $Q$.
We have now shown that $C^P_T$ is a finitely generated cone in
$Y_T(K)$ for every maximal torus $T$ of $G$.

It is clear that $C_T = C \cap Y_T(K)$ is the union of the cones $C^P_T$ as $P$ runs over
the parabolic subgroups of $G$ containing $T$ with $\sigma_P \in \Sigma$.
Since there are only finitely many parabolic subgroups of $G$ containing any given
maximal torus and each $C_T^P$ is finitely generated, we can conclude that because $C_T$ is convex,
$C_T$ is also finitely generated.
Hence, by the Minkowski-Weyl Theorem, $C_T$ is polyhedral for each $T$,
and hence $C$ is polyhedral.

It remains to show that $C$ is of finite type.
This also follows from the fact that for any maximal torus $T$ of $G$,
the set of parabolic subgroups of $G$ that contain $T$ is finite.
So there are only finitely many possibilities for $g\cdot(C_{g\inverse Tg})$
as $g$ ranges over all the elements of $G$.
Hence $C$ is of finite type, as required.
\end{proof}

\subsection{Tits' Centre Conjecture}
\label{subsec:tcc}

Suppose $\Sigma$ is a closed convex subset of $\Delta(\RR)$.
If there exists a point of $\Sigma$ which has no opposite in $\Sigma$, then $\Sigma$ is \emph{contractible}:
that is, $\Sigma$ has the homotopy type of a point.
The converse is also true: if every point of $\Sigma$ has an opposite in $\Sigma$, then $\Sigma$ is not contractible.
For these results, and further characterizations of contractibility, see \cite[Thm.\ 1.1]{BL06}, and also
\cite[\S 2.2]{serre2}.
This dichotomy leads to the following definitions, where our terminology is motivated by that of Serre \cite[Def.\ 2.2.1]{serre2}:

\begin{defn}
Recall that $K = \QQ$ or $\RR$.

\begin{itemize}
\item[(i)] Let $\Sigma$ be a convex subset of $\Delta(K)$.
We say that $\Sigma$ is \emph{$\Delta(K)$-completely reducible} (or $\Delta(K)$-cr) if every point in $\Sigma$ has an opposite in $\Sigma$.
\item[(ii)] Let $C$ be a convex, saturated cone in $Y(K)$.
We say that $C$ is \emph{$Y(K)$-completely reducible} (or $Y(K)$-cr) if for every $\lambda\in C$,
there exists $u \in R_u(P_\lambda)$ such that $-(u\cdot\lambda)\in C$
(note that it is automatic that $u\cdot\lambda \in C$ for all $u \in R_u(P_\lambda)$, since $C$ is saturated).
\end{itemize}
\end{defn}

Recall from Definition \ref{def:polyhedralsubset} that $C:=  \zeta^{-1}(\Sigma) \cup \{0\}$
is a saturated convex cone;
it is immediate that $\Sigma$ is
$\Delta(K)$-cr if and only if $C$ is $Y(K)$-cr.

\begin{defn}
\label{defn:centre}
Let $\Sigma$ be a subset of $\Delta(K)$ and let $c\in \Sigma$.
Let $\Gamma$ be a group acting on $\Delta(K)$ by building automorphisms.
We say that $c$ is a \emph{$\Gamma$-centre of $\Sigma$} if $c$ is fixed by $N_\Gamma(\Sigma)$.
\end{defn}



The following is a version of the so-called ``Centre Conjecture'' by J.\ Tits,
cf.\  \cite[Lem.\ 1.2]{tits0}, \cite[\S 4]{serre1.5},  \cite[\S 2.4]{serre2}, \cite{tits2}, \cite[Ch.\ 2, $\S$3]{mumford},
\cite[Conj.\ 3.3]{rousseau}, \cite{muhlherrtits}, \cite{lrc}, \cite{rc}.

\begin{conj}
\label{conj:tcc}
Let $\Sigma$ be a closed convex 
subset of $\Delta(K)$.
Then at least one of the following holds:
\begin{itemize}
\item[(i)] $\Sigma$ is $\Delta(K)$-cr;
\item[(ii)] $\Sigma$ has an $\Aut \Delta(K)$-centre.
\end{itemize}
\end{conj}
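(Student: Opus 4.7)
The plan is to reduce the conjecture to a Kempf-style optimization problem on the saturated convex cone $C := \zeta^{-1}(\Sigma) \cup \{0\} \subseteq Y(K)$ associated to $\Sigma$ via Definition \ref{def:polyhedralsubset} and Lemma \ref{lem:Ycones}. If (i) holds there is nothing to do, so assume $\Sigma$ is not $\Delta(K)$-cr; equivalently, there exists $\lambda_0 \in C$ such that no $R_u(P_{\lambda_0})$-conjugate of $-\lambda_0$ lies in $C$. This witness of non-complete-reducibility plays the role of an unstable vector in classical GIT whose destabilizing locus has no opposites, and provides the instability data needed to feed into Kempf's machinery.

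Following the strategy indicated in the introduction, I would associate to $C$ a \emph{quasi-state} in the generalized sense of Kempf: a compatible family of convex functions on the apartments $Y_T(K)$, obtained by duality from the polyhedral structure of $C$, at least when $\Sigma$ is polyhedral of finite type (which is automatic for subcomplexes by Lemma \ref{lem:cxcvx}). Pairing this quasi-state with a suitable state encoding the non-cr witness $\lambda_0$, one obtains a $G$-invariant functional of the form $\lambda \mapsto f(\lambda)/\|\lambda\|$ on $C \setminus \{0\}$. By the positive-definiteness of the norm (Definition \ref{def:norm}(b)) together with the convex-analytic facts from Section \ref{sec:cvxcones}, such a ratio attains a maximum on any bounded slice of $C_T$, and strict convexity of the squared norm yields a unique maximizing ray within each apartment.

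The crucial next step is to verify that the apartment-local maximizers glue to a well-defined $G$-conjugacy class of cocharacters $[\lambda_{\mathrm{opt}}] \subseteq C$. This requires comparing maximizers on pairs of apartments $Y_T(K)$ and $Y_{T'}(K)$ along the common chambers of $T \cap T'$, using the saturated-cone property (Definition \ref{def:convex-saturated}(ii)) and the description of $P_\lambda$ in terms of roots from Section \ref{sub:cochar}. Once this is done, $c := \zeta(\lambda_{\mathrm{opt}})$ is a well-defined point of $\Sigma$, and since the norm and the quasi-state are canonically determined by $\Sigma$, any $g \in N_G(\Sigma)$ permutes apartments while preserving the functional, hence fixes $[\lambda_{\mathrm{opt}}]$. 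This establishes that $c$ is a $G$-centre of $\Sigma$ in the sense of Definition \ref{defn:centre}.

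The principal obstacle, and the reason the paper achieves only a partial version of the conjecture, is passing from a $G$-centre to an $\Aut\Delta(K)$-centre as actually demanded by Conjecture \ref{conj:tcc}. The optimization argument is intrinsically tied to $G$ through the cocharacters, the roots, and the $G$-invariant norm; a building automorphism not induced by $G$ has no a priori reason to preserve the optimizing functional or the choice of norm on each apartment. Overcoming this would require either constructing the norm and functional in a fully $\Aut\Delta(K)$-equivariant way, or invoking the classification-based results of M\"uhlherr-Tits, Leeb-Ramos-Cuevas, and Ramos-Cuevas to reduce to the $G$-equivariant case. I expect this to be the genuinely hard step, and the one where any new GIT-based attack on the conjecture must ultimately prove its worth.
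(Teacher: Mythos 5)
The statement you are addressing is a conjecture; the paper does not claim a proof of it, and a central message of the paper (Remark~\ref{rems:howtoprovetcc}(i)) is that even the weaker $G$-centre version for polyhedral subsets of finite type is \emph{equivalent} to the existence of a suitable second quasi-state. Your proposal does not escape this circularity.

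The gap is in your second paragraph, where you write that ``pairing this quasi-state with a suitable state encoding the non-cr witness $\lambda_0$'' yields a functional to optimize. You treat this second state as if it were manufactured from $\lambda_0$ by a general recipe, but that is precisely where the whole difficulty lies. The defining quasi-state $\Xi$ for $C$, the one you obtain by cone duality, can have numerical functions vanishing identically on $C$ (Remark~\ref{rem:needtwostates}), giving no optimization data at all. The second quasi-state $\Upsilon$ must simultaneously be bounded, attain finite positive values on $C$, be admissible at its local maxima in $C$, and satisfy $N_G(\Sigma) \subseteq C_G(\Upsilon)$. By Theorem~\ref{thm:partialTCC} the existence of such a $\Upsilon$ is \emph{equivalent} to the existence of a $G$-centre, so assuming it is assuming the conclusion. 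The paper only produces such a $\Upsilon$ in special situations: when $C$ is a GIT destabilizing locus (Proposition~\ref{prop:kempfstate}), when $\Sigma$ lies in a single apartment (Theorem~\ref{thm:apt}), and when a subgroup $H$ witnesses non-complete-reducibility with $\Sigma \subseteq \Delta(K)^H$ (Theorem~\ref{thm:tcc}). A single non-opposed $\lambda_0$ is not enough data for any of these: in the proof of Theorem~\ref{thm:tcc} one needs the full subgroup $H = \bigcap_{\nu \in C} P_\nu$, a generic tuple for $H$, and the fact that $H$ is not contained in a Levi of $P_{\lambda_0}$, in order to set up a genuine instability problem on $G^n$ from which $\Upsilon$ can be built.

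Two further points. Your cone-duality construction only applies when $\Sigma$ is polyhedral of finite type, whereas the conjecture concerns arbitrary closed convex subsets; you flag this, but it is a substantive restriction, not a harmless normalization. And your final paragraph correctly identifies that the machinery yields at best $G$-centres rather than $\Aut\Delta(K)$-centres --- the paper acknowledges this and only sketches in Section~\ref{sec:extra} how to enlarge to $\Aut(G)$ and Galois automorphisms --- but the argument does not even secure the $G$-centre in general because of the circularity above. The honest assessment is that your proposal reformulates Conjecture~\ref{conj:tcc} in the language of quasi-states rather than resolving it, which is what the paper does as well.
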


Conjecture \ref{conj:tcc} often appears in the literature with the assumption that $\Sigma$ is
a \emph{subcomplex} of $\Delta(K)$, rather than an arbitrary closed convex subset.
In this form, the conjecture is known;
this is the culmination of work of B.\ M\"uhlherr and J.\ Tits \cite{muhlherrtits}
($G$ of classical type or type $G_2$),
B.\ Leeb and C.\ Ramos-Cuevas \cite{lrc}
($G$ of type $F_4$ or $E_6$) and C.\ Ramos-Cuevas \cite{rc}
($G$ of type $E_7$ or $E_8$).

\begin{thm}
\label{thm:knowntcc}
If $\Sigma$ is a convex subcomplex of $\Delta(K)$,
then Conjecture \ref{conj:tcc} holds.
\end{thm}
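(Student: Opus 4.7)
The plan is essentially to organize the theorem as a citation, with a small amount of preliminary reduction. The cited results handle the case-by-case analysis; no new input is needed beyond passing correctly between $K=\QQ$ and $K=\RR$ and between the semisimple and simple settings.

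First I would reduce to the case in which $G$ is (almost) simple. Since $G$ is semisimple, it is an almost direct product $G_1\cdots G_r$ of simple factors, and the building $\Delta(K)$ decomposes as a spherical join $\Delta_{G_1}(K)\ast\cdots\ast\Delta_{G_r}(K)$. A convex subcomplex $\Sigma\subseteq\Delta(K)$ is either contained in one of the factors or, for suitable $i$, projects nontrivially to $\Delta_{G_i}(K)$; the Centre Conjecture behaves well under joins, in the sense that a $\Delta_{G_i}(K)$-cr decomposition of each factor gives a $\Delta(K)$-cr decomposition of the join, while an $\Aut\Delta_{G_i}(K)$-centre in one of the factor projections, combined with an appropriate point in the complementary join, yields an $\Aut\Delta(K)$-centre of $\Sigma$. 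This standard reduction allows us to assume that $G$ is simple.

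Second, once $G$ is simple, I would invoke the three case-by-case results directly: \cite{muhlherrtits} handles $G$ of classical type and of type $G_2$; \cite{lrc} handles types $F_4$ and $E_6$; and \cite{rc} handles $E_7$ and $E_8$. Together these cover every simple type.

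The only mildly subtle point is that the published proofs are phrased for the topological realization, i.e.\ for $K=\RR$. To deduce the statement for $K=\QQ$, I would note that $\Delta(\QQ)$ is a dense subspace of $\Delta(\RR)$ (see Section \ref{sub:buildings}) and that a convex subcomplex $\Sigma\subseteq\Delta(\QQ)$ has a canonical extension to a convex subcomplex $\overline\Sigma\subseteq\Delta(\RR)$ with the same combinatorial structure; a centre for $\overline\Sigma$ in $\Delta(\RR)$ lies inside some closed simplex $\sigma$ and is fixed by the stabilizer of that simplex, and therefore can be replaced by the barycentre of the orbit under the (finite) stabilizer action on $\sigma$, which lies in $\Delta(\QQ)$. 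Similarly, complete reducibility in $\Delta(\RR)$ descends to complete reducibility in $\Delta(\QQ)$ because opposite vertices of a simplex are $\QQ$-rational.

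The main obstacle, such as it is, lies entirely in the cited case-by-case work; the reduction sketched above and the $\QQ$ versus $\RR$ comparison are routine. There is no new idea needed here, and I expect the write-up in the paper to be a short paragraph of attributions rather than an argument.
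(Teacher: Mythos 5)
Your proposal is correct and takes essentially the same route as the paper: Theorem~\ref{thm:knowntcc} is presented there as an attribution to \cite{muhlherrtits}, \cite{lrc}, and \cite{rc}, with no further argument. The reduction to irreducible buildings and the passage between $K=\QQ$ and $K=\RR$ are both routine and are handled in the cited works (the centre produced there is a simplex, so rationality of the resulting barycentre is automatic); the paper does not spell these out, and neither step is required by the statement.
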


When $\Sigma$ is a closed convex subset of $\Delta(K)$ but not a subcomplex, very few cases of Conjecture \ref{conj:tcc} are known.
If the dimension of $\Sigma$ is at most $2$, then the conjecture is true, \cite{BL05}.

The proofs of the various cases of Theorem \ref{thm:knowntcc} in \cite{muhlherrtits}, \cite{lrc} and \cite{rc}
rely on the extra simplicial structure carried by a subcomplex, and it is not clear whether these methods
can be extended to arbitrary convex subsets of $\Delta(K)$, \cite[Sec.\ 1]{rc}.
One area in which relevant cases of the Centre Conjecture have been known for some time is Geometric Invariant Theory,
see \cite{kempf}, \cite{rousseau}, \cite{mumford}.
It is our intention in this paper to elucidate and extend the methods of {\em loc.\ cit.}\ with particular reference to Conjecture \ref{conj:tcc}.
In doing this, we are able to consider a wider class of convex subsets of a spherical building $\Delta(K)$ and show how to reformulate
the Centre Conjecture for a subset of this class.
This class consists precisely of the convex subsets
of $\Delta(K)$ that are polyhedral and of finite type, cf. Definition \ref{def:polyhedralsubset}.

\begin{rem}
It is worth pointing out that in Conjecture \ref{conj:tcc} the subset $\Sigma$ is assumed to be closed
in $\Delta(K)$, whereas in most of our results in the sequel we do not require this hypothesis of closedness.
Thus, in some sense,
we are looking at a slightly generalized version of the conjecture.
However, we do need to impose the extra conditions that $\Sigma$ is polyhedral and of finite type, and we
restrict attention in this paper to finding $G$-centres, rather than $\Aut \Delta(K)$-centres,
so this narrows the field again.
Note that a convex \emph{subcomplex} of a spherical building, being both closed and polyhedral
of finite type by Lemma \ref{lem:cxcvx} above, fits into either camp.
\end{rem}



\subsection{$G$-complete reducibility}
\label{sec:gcr}

We briefly recall some definitions and results concerning Serre's notion
of $G$-complete reducibility for subgroups of $G$, see
\cite{serre1.5}, \cite{serre2},
\cite{BMR}, \cite{BMRT}, and \cite{GIT} for more details.
A subgroup $H$ of $G$ is called \emph{$G$-completely reducible} ($G$-cr) if
whenever $H$ is contained in a parabolic subgroup $P$ of $G$,
there exists a Levi subgroup of $P$ containing $H$.
This concept can be interpreted in the building
$\Delta(K)$ of $G$:
let
$$
\Delta(K)^H := \bigcup_{H \subseteq P} \sigma_P,
$$
the fixed point set of $H$ in $\Delta(K)$.  Then $\Delta(K)^H$ is a convex subcomplex of $\Delta(K)$, which is $\Delta(K)$-cr if and only if $H$ is $G$-cr \cite[\S3]{serre2}.
The study of $G$-complete reducibility motivated much of the work in this paper;
for a direct application of (the known cases of) the Centre Conjecture \ref{conj:tcc}
to $G$-complete reducibility, see \cite{BMR:sepext}.

In the proof of Theorem \ref{thm:tcc} below, we require a piece of terminology introduced in \cite[Def.\ 5.4]{GIT}.
Let $H$ be a subgroup of $G$, let $G \to \GL_m$ be an embedding of algebraic groups, and let $n \in \NN$.
We call $\mathbf{h} \in H^n$ a \emph{generic tuple of $H$} if the components of $\mathbf{h}$ generate the
associative subalgebra of $\Mat_m$ spanned by $H$.
Generic tuples always exist if $n$ is sufficiently large.
Now $G$ acts on $G^n$ by simultaneous conjugation, and $H$ is $G$-cr if and only if the
$G$-orbit of $\mathbf{h} \in H^n$ is closed in $G^n$,
where $\mathbf{h}$ is a generic tuple of $H$, \cite[Thm.\ 5.8(iii)]{GIT}.

\subsection{Instability in GIT}
\label{sec:instab}

Many of the results in this paper are inspired by
constructions of Kempf \cite{kempf} and Hesselink \cite{He}, and
also by our generalization of their work in \cite{GIT}.
We briefly recall some of the main definitions which are relevant to our subsequent discussion
(see especially Section \ref{sec:gitstcc} below).
Throughout this section,
$G$ acts on an affine variety $A$, and $S$ is a non-empty $G$-stable closed subvariety of $A$.
We denote the $G$-orbit of $x$ in $A$ by $G\cdot x$.
For any $\lambda \in Y$, there is a morphism
$\phi_{x,\lambda}:k^* \to A$,
given by $\phi_{x,\lambda}(a) = \lambda(a)\cdot x$ for each $a \in k^*$.
If this morphism extends to a
morphism $\widehat{\phi}_{x,\lambda}:k \to A$, then we say that
$\lim_{a\to 0} \lambda(a)\cdot x$ \emph{exists}, and we set this limit
equal to $\widehat{\phi}_{x,\lambda}(0)$.
In this case we say that $\lambda$ \emph{destabilizes $x$}, and we say that
$\lambda$ \emph{properly destabilizes $x$} if the limit does not belong to $G\cdot x$;
we call the corresponding parabolic subgroup $P_\lambda$ a \emph{(properly) destabilizing parabolic subgroup for $x$}.

The following are \cite[Def.\ 4.2 and Def.\ 4.4]{GIT}.

\begin{defn}
\label{def:destabilizing}
For each non-empty subset $U$ of $A$,
define $|A,U|$ as the set of
$\lambda \in Y$ such that $\underset{a\to 0}{\lim}\, \lambda(a)\cdot x$
exists for all $x\in U$.
We define
$$ |A,U|_S= \{\lambda\in |A,U|\mid \underset{a\to 0}{\lim}\, \lambda(a)\cdot x \in S\ \mbox{for all $x \in U$}\}. $$
If $\lambda\in |A,U|_S$, then we say $\lambda$ \emph{destabilizes $U$ into $S$}
or is a \emph{destabilizing cocharacter for $U$ with respect to $S$}.
Extending Hesselink \cite{He}, we call $U$ \emph{uniformly $S$-unstable} if $|A,U|_S$ is non-empty;
if, in addition, $U \not\subseteq S$, we call $U$ \emph{properly uniformly $S$-unstable}.
We write $|A,U|_s$ instead of $|A,U|_{\{s\}}$ if $S= \{s\}$ is a singleton, and we write $|A,x|_S$ instead of $|A,\{x\}|_S$ if $U= \{x\}$ is a singleton.
By the
Hilbert-Mumford Theorem \cite[Thm.\ 1.4]{kempf}, $x\in A$ is $S$-unstable
if and only if $\overline{G\cdot x}\cap S\ne\varnothing$.
Note that if $U$ is properly uniformly $S$-unstable, then $|A,U|_S$ is a proper subset of $|A,U|$
(we could have $\lambda\in |A,U|$ such that $\lim_{a\ra 0} \lambda(a)\cdot x$ does not lie in $S$
for some $x \in U$; the zero cocharacter gives an example of this phenomenon).
\end{defn}

Suppose $U$ is properly uniformly $S$-unstable.
In \cite[Thm.\ 4.5]{GIT}, we constructed a so-called \emph{optimal class} of cocharacters contained
in $|A,U|_S$ which enjoys a number of useful properties.
That construction consisted of a strengthening of arguments of Kempf and Hesselink;
one of the main goals of this paper is to extend these ideas even further and interpret them in
the language of buildings, where they give new positive results for Conjecture \ref{conj:tcc}.
The central idea is that the subset $|A,U|$ of $Y$ gives rise to a
convex subset of the building $\Delta(K)$,
and if $\lambda$ belongs to the optimal class, then $\zeta(\lambda)$ is a $G$-centre of this subset;
see Section \ref{sec:gitstcc} for precise details.
Indeed, many of the results of Kempf \cite{kempf} and Hesselink \cite{He}, and our uniform $S$-instability results in \cite[Sec.\ 4]{GIT},
can be recovered as special cases of the general constructions presented in this paper; see for
example Remark \ref{rem:getkempf} below.

\section{Quasi-states and optimality}
\label{sec:quasistates}

In this section we generalize some of the results of Kempf from
\cite{kempf}, concerning states.
We then translate these results into the language
of buildings, and show how they can be used to
prove Conjecture \ref{conj:tcc} in various cases.
Our core result is Theorem \ref{thm:kempf2.2},
which generalizes Kempf's key theorem \cite[Thm.\ 2.2]{kempf}.

The main point of the material at the start of this section is
that many results in \cite{kempf} go through under considerably
weaker hypotheses;
this allows us to extend Kempf's formalism to cover important new cases,
as demonstrated in Sections \ref{sec:buildings} and \ref{sec:gitstcc}.
We start by introducing quasi-states, generalizing Kempf's notion of a state \cite[Sec.~2]{kempf}.

\begin{defn}
\label{def:quasistate}
A \emph{real quasi-state} $\Xi$ of $G$ is an
assignment of a finite (possibly empty) set $\Xi(T)$ of elements
of $X_T(\RR)$ for each maximal torus $T$ of $G$.
If $\Xi(T)\subseteq X_T(\QQ)$ for every $T$,
then we call $\Xi$ a \emph{rational quasi-state},
and if $\Xi(T)\subseteq X_T$ for every $T$,
then we call $\Xi$ an \emph{integral quasi-state}.
If $K= \QQ$ or $\RR$, then \emph{$K$-quasi-state} has the obvious meaning.
Given a real quasi-state $\Xi$ and $g\in G$, we define a new
real quasi-state $g_* \Xi$ by
\[
(g_* \Xi)(T) := g_! \Xi(g^{-1}Tg) \subseteq X_T(K).
\]
This defines a left action of $G$ on the set of real quasi-states
of $G$.
Note that if $\Xi$ is rational (resp.\ integral),
then so is $g_*\Xi$ for any $g\in G$.
For each real quasi-state $\Xi$, we write
$C_G(\Xi) = \{g \in G \mid g_* \Xi = \Xi\}$ for
the centralizer of $\Xi$ in $G$.
We say that
$\Xi$ is \emph{bounded} if for every maximal torus
$T$ of $G$, the set $\bigcup_{g\in G}(g_* \Xi)(T)$ is finite.
Note that if $\Xi$ is a bounded $\QQ$-quasi-state, then there exists some $n \in \NN$
such that for all maximal tori $T$ of $G$ and any $\alpha \in \Xi(T)$, we have
$n\alpha \in X_T$;
i.e., scaling a rational quasi-state by a suitably large positive integer, we can ensure it becomes integral.
\end{defn}

\begin{defn}
\label{defn:numer}
Associated to a real quasi-state $\Xi$ and a maximal torus $T$ of $G$,
we have the $\RR$-valued function
$\mu(\Xi,T,\cdot)\colon Y_T(\RR)\ra \RR$ defined by
\[
\mu(\Xi,T,\lambda)
:= \min_{\alpha\in \Xi(T)} \langle \lambda,\alpha \rangle.
\]
We call $\mu(\Xi,T,\cdot)$ the \emph{numerical function of $\Xi$ and $T$}.
Note that for a fixed maximal torus $T$ of $G$,
$\Xi(T)$ is empty if and only if $\mu(\Xi,T,\lambda) = \infty$ for some
$\lambda \in Y_T(\RR)$ if and only if $\mu(\Xi,T,\lambda) = \infty$ for all $\lambda \in Y_T(\RR)$.
Note also that if $\Xi$ is rational (resp.\ integral),
then the associated numerical function takes rational (resp.\ integer)
values on $Y_T(\QQ)$ (resp.\ $Y_T)$, wherever it is finite.

Suppose $\lambda \in Y(\RR)$.
We say that $\Xi$ is \emph{admissible at $\lambda$}
if for any maximal torus $T$ of $G$ with $\lambda \in Y_T(\RR)$ and any $x\in P_\lambda$,
we have
\[
\mu(\Xi,xTx^{-1},x \cdot \lambda)= \mu(\Xi,T, \lambda).
\]
By extension, for any subset $S$ of $Y(\RR)$, we say that $\Xi$ is \emph{admissible on $S$}
if $\Xi$ is admissible at every point of $S$.
If $\Xi$ is admissible on all of $Y(\RR)$,
then we simply call $\Xi$ an \emph{admissible} quasi-state
(note that this agrees with the definition of admissibility given in \cite[Sec.\ 2]{kempf}).

We say that $\Xi$ is \emph{quasi-admissible}
if for any maximal torus $T$ of $G$, any $\lambda\in Y_T(\RR)$,
and any $x\in P_\lambda$,
we have

\[
\mu(\Xi,T,\lambda)\geq 0 \quad \Rightarrow \quad
\mu(\Xi,xTx^{-1},x\cdot \lambda)\geq 0.
\]
Note that if $\Xi$ is admissible,  
then $\Xi$ is quasi-admissible.
\end{defn}

\begin{rem}
\label{rem:kempforus}
Our concept of a quasi-state is weaker than Kempf's notion of a state \cite[Sec.\ 2]{kempf}.
However, Kempf's main result \cite[Thm.~2.2]{kempf} goes through
with his bounded admissible states
replaced by bounded admissible quasi-states. 
The real difference between our results and Kempf's is that
we replace admissibility with the weaker notions of
admissibility at a point and quasi-admissibility;
this gives us genuinely new results.

We also note that it is rather important for our purpose of translating results
into the language of buildings to be able to use admissibility at a point,
rather than Kempf's stronger notion of admissibility;
see especially Theorem \ref{thm:partialTCC} and the subsequent Remark
\ref{rems:howtoprovetcc}(ii) below.
\end{rem}



We collect some useful properties of quasi-states in the next lemma.

\begin{lem}
\label{lem:propertiesofstates}
Suppose $\Xi$ is a real quasi-state of $G$.
\begin{itemize}
\item[(i)] Suppose $T$ is any maximal torus of $G$,
and suppose that $\lambda_1,\lambda_2 \in Y_T(\RR)$ are such that
$\mu(\Xi,T,\lambda_i) > 0$ for $i=1,2$.
Then $\mu(\Xi,T,\nu) > 0$ for any $\nu = a_1\lambda_1 + a_2\lambda_2$, where $a_1, a_2 \in \RR_{\geq 0}$
are not both $0$.
\item[(ii)] For any maximal torus $T$ of $G$, any $\lambda \in Y_T(\RR)$, and any $g \in G$, we have
$$
\mu(\Xi,T,\lambda) = \mu(g_*\Xi, gTg\inverse, g\cdot\lambda).
$$
\item[(iii)] If $\Xi$ is admissible at $\lambda \in Y(\RR)$, then
the value of $\mu(\Xi,T,\lambda)$ is independent of the choice of maximal torus
$T$ with $\lambda\in Y_T(\RR)$.
\item[(iv)] If $\Xi$ is quasi-admissible and $\lambda \in Y(\RR)$, then
whether $\mu(\Xi,T,\lambda)$ is non-negative or not
is independent of the choice of maximal torus
$T$ with $\lambda\in Y_T(\RR)$.
\item[(v)] If $\Xi$ is admissible at $\lambda \in Y(\RR)$, then $g_*\Xi$
is admissible at $g\cdot\lambda$ for any $g \in G$.
\item[(vi)]If $\Xi$ is quasi-admissible, then $g_*\Xi$ is quasi-admissible for any $g \in G$.
\end{itemize}
\end{lem}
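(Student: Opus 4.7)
The plan is to prove the six parts more or less in the order stated, since each is essentially a direct unwinding of the definitions combined with the invariance property \eqref{eqn:ipinvce}. Part~(ii) is the workhorse that makes (iii)--(vi) routine, so I would prove it early and then apply it repeatedly.

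For (i), I would use linearity of the pairing $\langle \ , \ \rangle$: since $\Xi(T)$ is finite, the hypothesis says $\langle \lambda_i,\alpha\rangle>0$ for every $\alpha\in\Xi(T)$, so for $\nu=a_1\lambda_1+a_2\lambda_2$ with $(a_1,a_2)\neq 0$ and $a_j\geq 0$, we get $\langle\nu,\alpha\rangle=a_1\langle\lambda_1,\alpha\rangle+a_2\langle\lambda_2,\alpha\rangle>0$ for each $\alpha\in\Xi(T)$; taking the minimum over $\Xi(T)$ finishes it. For (ii), unwinding the definition of $g_*\Xi$ gives $(g_*\Xi)(gTg^{-1}) = g_!\Xi(T)$, so
\[
\mu(g_*\Xi,gTg^{-1},g\cdot\lambda)=\min_{\beta\in\Xi(T)}\langle g\cdot\lambda, g_!\beta\rangle,
\]
and each pairing on the right equals $\langle\lambda,\beta\rangle$ by \eqref{eqn:ipinvce}, yielding $\mu(\Xi,T,\lambda)$.

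For (iii) and (iv), suppose $T_1$ and $T_2$ are maximal tori with $\lambda\in Y_{T_1}(\RR)\cap Y_{T_2}(\RR)$. Then $T_1,T_2\subseteq C_G(\lambda)=L_\lambda$, so by conjugacy of maximal tori in $L_\lambda$ there is $x\in L_\lambda\subseteq P_\lambda$ with $xT_1x^{-1}=T_2$, and $x\cdot\lambda=\lambda$. The admissibility hypothesis then gives $\mu(\Xi,T_2,\lambda)=\mu(\Xi,xT_1x^{-1},x\cdot\lambda)=\mu(\Xi,T_1,\lambda)$, proving (iii). For (iv), the same $x$ together with the inverse element $x^{-1}\in L_\lambda=L_{x\cdot\lambda}\subseteq P_{x\cdot\lambda}$ lets us transport the non-negativity condition in both directions using the quasi-admissibility hypothesis, settling (iv).

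Finally, (v) and (vi) follow by combining (ii) with a conjugation trick. For (v), let $T'$ be a maximal torus with $g\cdot\lambda\in Y_{T'}(\RR)$ and let $y\in P_{g\cdot\lambda}=gP_\lambda g^{-1}$. Set $T=g^{-1}T'g$ and $x=g^{-1}yg\in P_\lambda$; then $yT'y^{-1}=g(xTx^{-1})g^{-1}$ and $y\cdot(g\cdot\lambda)=g\cdot(x\cdot\lambda)$. Applying (ii) to both sides converts the desired equality
\[
\mu(g_*\Xi,yT'y^{-1},y\cdot(g\cdot\lambda))=\mu(g_*\Xi,T',g\cdot\lambda)
\]
into $\mu(\Xi,xTx^{-1},x\cdot\lambda)=\mu(\Xi,T,\lambda)$, which holds by admissibility of $\Xi$ at $\lambda$. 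Part (vi) is identical with the equalities replaced by inequalities $\geq 0$. The only step that requires any care at all is being careful about which torus and which element of $P_\lambda$ one is working with in (iii)--(vi); there is no genuine obstacle, as the invariance \eqref{eqn:ipinvce} and the definition of $g_*\Xi$ do all the real work.
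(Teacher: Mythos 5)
Your proof is correct and follows essentially the same approach as the paper's: (i) by linearity, (ii) from the definition of $g_*\Xi$ together with \eqref{eqn:ipinvce}, (iii) and (iv) by conjugating between the two tori through an element of $L_\lambda\subseteq P_\lambda$, and (v) and (vi) by applying (ii) twice to reduce to the (quasi-)admissibility of $\Xi$. The only cosmetic difference is that in (v)--(vi) you parameterize by $T'$ and $y\in P_{g\cdot\lambda}$ and conjugate back, whereas the paper starts from $T$ and $x\in P_\lambda$ and conjugates forward; these are the same argument.
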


\begin{proof}
(i). For $i=1,2$, we have $\mu(\Xi,T,\lambda_i) >0$ if and only if $\langle\lambda_i,\alpha\rangle >0$ for all $\alpha \in \Xi(T)$.
If this holds, and if $a_1,a_2 \in \RR_{\geq 0}$ are not both $0$, then $\langle a_1\lambda_1 + a_2\lambda_2,\alpha\rangle >0$ for all $\alpha \in \Xi(T)$.
This gives the result.

(ii). By definition, $(g_*\Xi)(gTg\inverse) = g_!\Xi(T)$.
The result now follows from \eqref{eqn:ipinvce}.

(iii) and (iv). Suppose $T$ and $T'$ are maximal tori of $G$ and
$\lambda\in Y_T(\RR)\cap Y_{T'}(\RR)$.
Then $xTx^{-1}=T'$ for some
$x\in L_\lambda \subseteq P_\lambda$.
So if $\Xi$ is admissible at $\lambda$, we have
$$
\mu(\Xi,T,\lambda) = \mu(\Xi,xTx^{-1},x\cdot \lambda) = \mu(\Xi,T',\lambda),
$$

which proves (iii).
Similarly, if $\Xi$ is quasi-admissible, then
$$
\mu(\Xi,T,\lambda)\geq 0 \iff \mu(\Xi,xTx^{-1},x\cdot \lambda)\geq 0
\iff \mu(\Xi,T',\lambda)\geq 0,
$$
which proves (iv).

(v) and (vi). Suppose $g \in G$, $\lambda \in Y_T(\RR)$ and $x \in P_\lambda$.
Set $\lambda' = g\cdot\lambda$, $T' = gTg\inverse$ and $y = gxg\inverse$.
Then $\lambda' \in Y_{T'}(\RR)$ and $y \in P_{\lambda'}$.
By part (ii), in this situation we have
\begin{equation}\label{eqn:elephant}
\mu(g_*\Xi, T', \lambda') = \mu(\Xi,T,\lambda).
\end{equation}
Moreover, for the same reason, we also have
\begin{equation}\label{eqn:trousers}
\mu(g_*\Xi,yT'y\inverse,y\cdot\lambda')
=
\mu(\Xi,xTx\inverse,x\cdot\lambda).
\end{equation}
Now suppose $\Xi$ is admissible at $\lambda$.
Then $\mu(\Xi,T,\lambda) = \mu(\Xi,xTx\inverse,x\cdot\lambda)$.
Combining \eqref{eqn:elephant} and \eqref{eqn:trousers} shows that $g_*\Xi$ is admissible at $\lambda'$,
which proves (v).

Finally, suppose $\Xi$ is quasi-admissible.
Then by \eqref{eqn:elephant},
we have $\mu(g_*\Xi,T',\lambda') \geq 0$ if and only if $\mu(\Xi,T,\lambda) \geq 0$,
so $g_*\Xi$ is also quasi-admissible, by \eqref{eqn:trousers}, which proves (vi).
\end{proof}

\begin{defn}
\label{def:Z}
If $\Xi$ is a quasi-admissible $K$-quasi-state, then we can define a subset $Z(\Xi)$
of $Y(K)$ by setting
\[
Z(\Xi):= \{\lambda\in Y(K)\mid \exists
\text{ a maximal torus $T$ of $G$ with } \lambda\in Y_T(K)
\text{ and }\mu(\Xi,T,\lambda)\geq 0 \}.
\]
Here we use the convention that $\infty > 0$, so that in particular if $\Xi(T)= \varnothing$ for every maximal torus $T$ of $G$,
then $Z(\Xi) = Y(K)$.
By Lemma \ref{lem:propertiesofstates}(iv),
the quasi-admissibility of $\Xi$ implies that whether or not $\lambda$ belongs to $Z(\Xi)$ is independent
of which maximal torus $T$ of $G$ we choose with $\lambda \in Y_T(K)$.
\end{defn}

Our next two results show how to make new quasi-states by taking unions,
and how to exert some control over the stabilizer of a quasi-state.

\begin{lem}\label{lem:unionofstates}
Let $I$ be an arbitrary indexing set.
For each $i \in I$, let $\Xi_i$ be a $K$-quasi-state.
We define $\Xi := \bigcup_{i\in I}\Xi_i$ by setting $\Xi(T):= \bigcup_{i \in I} \Xi_i(T)$ for each maximal torus $T$ of $G$. Then:
\begin{itemize}
\item[(i)] If $\Xi(T)$ is finite for all maximal tori $T$ of $G$,
then $\Xi$ is a $K$-quasi-state.
\item[(ii)] If for some maximal torus $T$ of $G$ (and hence for every maximal torus $T$ of $G$),
$$
\bigcup_{i\in I} \left(\bigcup_{g\in G}(g_*\Xi_i)(T)\right)
$$
is finite, then $\Xi$ is a bounded $K$-quasi-state.
\item[(iii)] If $\Xi$ is a $K$-quasi-state and
every $\Xi_i$ is admissible at $\lambda \in Y(\RR)$, then
$\Xi$ is admissible at $\lambda$.
Similarly, if each $\Xi_i$ is quasi-admissible, then so is $\Xi$.
\item[(iv)] Suppose $\Xi$ is a $K$-quasi-state.  If $T$ is a maximal torus of $G$, $\lambda\in Y_T(K)$ and $\mu(\Xi_i,T,\lambda)>0$ for all $i$, then $\mu(\Xi,T,\lambda)>0$.
\item[(v)] Suppose $\Xi$ is a $K$-quasi-state.  Let $T$ be a maximal torus of $G$ and suppose that for every $i\in I$, there exists $\lambda_i\in Z(\Xi)_T$ such that $\mu(\Xi_i,T,\lambda_i)>0$.  Then there exists $\gamma\in Z(\Xi)_T$ such that $\mu(\Xi,T,\gamma)>0$.
\end{itemize}
\end{lem}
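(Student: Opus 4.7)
The plan rests on a single identity:
$$\mu(\Xi,T,\lambda)=\min_{\alpha\in\Xi(T)}\langle\lambda,\alpha\rangle=\inf_{i\in I}\mu(\Xi_i,T,\lambda),$$
valid whenever $\Xi(T)$ is finite. Part (i) is then immediate, since each $\Xi_i(T)$ lies in $X_T(K)$ and the finiteness is assumed outright. For (ii), taking $g=e$ in the finiteness hypothesis forces each $\Xi(T)$ to be finite, so (i) delivers a $K$-quasi-state; boundedness then reduces to the interchange $\bigcup_{g\in G}(g_*\Xi)(T)=\bigcup_{g\in G}\bigcup_{i\in I}(g_*\Xi_i)(T)=\bigcup_{i\in I}\bigcup_{g\in G}(g_*\Xi_i)(T)$. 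For the ``hence for every $T$'' remark I would pick $h\in G$ with $hTh^{-1}=T'$ and use the substitution $g=hg'$ to verify that $\bigcup_{g\in G}(g_*\Xi_i)(T')=h_!\bigl(\bigcup_{g'\in G}(g'_*\Xi_i)(T)\bigr)$, so finiteness for $T$ passes to $T'$ via the bijection $h_!$.

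Parts (iii) and (iv) follow from the key identity. For (iii), applying the identity both at $\lambda$ and at $x\cdot\lambda$ with $x\in P_\lambda$ shows that admissibility at $\lambda$ for every $\Xi_i$ forces the two resulting infima to coincide; for quasi-admissibility, note that $\mu(\Xi,T,\lambda)\geq 0$ forces $\mu(\Xi_i,T,\lambda)\geq 0$ for each $i$, whereupon quasi-admissibility of each $\Xi_i$ passes to $\Xi$. For (iv), any $\alpha\in\Xi(T)$ lies in some $\Xi_{i(\alpha)}(T)$, hence $\langle\lambda,\alpha\rangle\geq\mu(\Xi_{i(\alpha)},T,\lambda)>0$, and the minimum over the finite set $\Xi(T)$ remains positive.

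The main obstacle is (v), since $I$ may be infinite and one cannot simply form the sum $\sum_{i\in I}\lambda_i$. The plan is to exploit the finiteness of $\Xi(T)$ to extract a finite sub-selection. Enumerate $\Xi(T)=\{\alpha_1,\ldots,\alpha_m\}$ and, for each $j$, choose an index $i(j)\in I$ with $\alpha_j\in\Xi_{i(j)}(T)$; then set
$$\gamma:=\lambda_{i(1)}+\cdots+\lambda_{i(m)}\in Y_T(K).$$
For each fixed $j$, the summand indexed by $i(j)$ contributes
$\langle\lambda_{i(j)},\alpha_j\rangle\geq\mu(\Xi_{i(j)},T,\lambda_{i(j)})>0$,
while for $k\neq j$ the summand satisfies $\langle\lambda_{i(k)},\alpha_j\rangle\geq\mu(\Xi,T,\lambda_{i(k)})\geq 0$, using $\lambda_{i(k)}\in Z(\Xi)_T$ together with Lemma \ref{lem:propertiesofstates}(iv) (so that $\mu(\Xi,T,\lambda_{i(k)})\geq 0$ regardless of the torus used to test). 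Summing gives $\langle\gamma,\alpha_j\rangle>0$ for every $j$, hence $\mu(\Xi,T,\gamma)>0$, which in particular puts $\gamma\in Z(\Xi)_T$ as required.
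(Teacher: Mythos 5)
Your proposal is correct and follows essentially the same route as the paper: the identity $\mu(\Xi,T,\lambda)=\inf_{i}\mu(\Xi_i,T,\lambda)$ (reducing to a finite effective index set by the finiteness of $\Xi(T)$) drives parts (i)--(iv), and part (v) uses the same finite sum $\gamma=\sum_{\chi\in\Xi(T)}\lambda_{i(\chi)}$, where each summand contributes strictly positively against its own $\chi$ and non-negatively against the rest. The only cosmetic difference is that you spell out the ``hence for every $T$'' parenthetical in (ii) and explicitly cite Lemma~\ref{lem:propertiesofstates}(iv) in (v), both of which the paper leaves implicit.
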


\begin{proof}
(i) and (ii) are immediate. For (iii), suppose each $\Xi_i$ is admissible at $\lambda$,
and choose a maximal torus $T$ of $G$ such that
$\lambda \in Y_T(K)$.
Then for $x \in P_\lambda$, we can write
$$
\mu(\Xi,xTx\inverse,x\cdot\lambda) = \min_{i\in I} \mu(\Xi_i,xTx\inverse,x\cdot\lambda) = \min_{i\in I} \mu(\Xi_i,T,\lambda) = \mu(\Xi,T,\lambda),
$$
where the admissibility of each $\Xi_i$ tells us that
$\mu(\Xi_i,xTx\inverse,x\cdot\lambda) = \mu(\Xi_i,T,\lambda)$ for each $i$,
and (i) implies that to calculate each ``$\min$'' we only need to consider a finite set of these values.  This proves (iii).  For (iv), note that $\Xi(T)= \bigcup_{i\in J} \Xi_i(T)$ for some finite subset $J$ of $I$, so we have $\mu(\Xi,T,\lambda)= {\rm min}_{i\in J} \mu(\Xi_i,T,\lambda)> 0$.

Now we consider (v).  Let $\chi\in \Xi(T)$.  Then $\chi\in \Xi_i(T)$ for some $i\in I$.  By hypothesis, there exists $\lambda_i\in Z(\Xi)_T$ such that $\mu(\Xi_i,T,\lambda_i)>0$.  Then $\langle \lambda_i,\chi_i\rangle > 0$.  Set $\lambda_\chi:=\lambda_i$.  Set $\gamma:=\sum_\chi \lambda_\chi\in Z(\Xi)_T$ (this makes sense, because $\Xi(T)$ is finite).  Now a simple calculation shows that $\mu(\Xi,T,\gamma)>0.$
\end{proof}

\begin{lem}
\label{lem:averaging}
Let $\Xi$ be a bounded $K$-quasi-state and let $H$ be a subgroup of $G$.
Then
$\Theta:=\bigcup_{h\in H} h_*\Xi$
is a bounded $K$-quasi-state, which is admissible wherever $\Xi$ is and quasi-admissible if $\Xi$ is.
Moreover, $H \subseteq C_G(\Theta)$.
\end{lem}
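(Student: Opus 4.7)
The plan is to reduce everything to the previous two lemmas: since $\Theta$ is by construction a union of the bounded $K$-quasi-states $h_*\Xi$ for $h \in H$, the statement should follow by combining Lemma \ref{lem:propertiesofstates}, which controls how admissibility behaves under the $G$-action, with Lemma \ref{lem:unionofstates}, which packages unions of quasi-states.

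First I would verify boundedness (and hence the quasi-state property) via Lemma \ref{lem:unionofstates}(ii). Fix a maximal torus $T$; using that $G$ acts on the left on quasi-states (so $g_* h_* \Xi = (gh)_* \Xi$), we have
\[
\bigcup_{h\in H}\left(\bigcup_{g\in G}(g_*h_*\Xi)(T)\right)
= \bigcup_{h\in H}\bigcup_{g\in G}((gh)_*\Xi)(T)
= \bigcup_{g'\in G}(g'_*\Xi)(T),
\]
since for each fixed $h$, the map $g \mapsto gh$ is a bijection of $G$. The right-hand side is finite by the boundedness of $\Xi$, so Lemma \ref{lem:unionofstates}(ii) yields that $\Theta$ is a bounded $K$-quasi-state.

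For the admissibility claims, I would invoke Lemma \ref{lem:propertiesofstates}(v), (vi): if $\Xi$ is admissible at $\lambda$ (respectively quasi-admissible), then each $h_*\Xi$ is admissible at $h\cdot\lambda$ (respectively quasi-admissible) for every $h \in H$. In particular, if $\Xi$ is admissible on an $H$-invariant set (most usefully all of $Y(\RR)$), then every $h_*\Xi$ is admissible on that set, and Lemma \ref{lem:unionofstates}(iii) then lifts this pointwise to admissibility of $\Theta$. The quasi-admissibility assertion is identical, using Lemma \ref{lem:propertiesofstates}(vi) in place of (v).

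Finally, $H \subseteq C_G(\Theta)$ follows from the observation that the $G$-action commutes with unions of quasi-states -- immediate from $(g_*\Xi)(T) = g_!\Xi(g^{-1}Tg)$ -- together with a reindexing: for $h \in H$,
\[
h_*\Theta \;=\; \bigcup_{h'\in H}(hh')_*\Xi \;=\; \Theta,
\]
since $h' \mapsto hh'$ is a bijection of $H$. None of the individual steps is really an obstacle; the only point requiring care is the bookkeeping in the first step, where one must verify that reindexing by $h$ neither shrinks nor inflates the relevant union -- which is automatic since $H$ and $G$ are groups.
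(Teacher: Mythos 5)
Your proof is correct and follows essentially the same route as the paper, which simply cites Lemma~\ref{lem:unionofstates} (with $I=H$, $\Xi_h = h_*\Xi$) together with Lemma~\ref{lem:propertiesofstates}(v),(vi) and leaves the reindexing arguments implicit; you have supplied exactly those details. One small point worth noting: you correctly observe that the literal phrasing ``admissible wherever $\Xi$ is'' needs care, since Lemma~\ref{lem:propertiesofstates}(v) gives admissibility of $h_*\Xi$ at $h\cdot\lambda$ rather than at $\lambda$, so the pointwise conclusion for $\Theta$ at $\lambda$ really requires $\Xi$ to be admissible at $h^{-1}\cdot\lambda$ for all $h\in H$. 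The paper does not flag this, but it is harmless: everywhere the lemma is invoked later (Lemma~\ref{lem:qstate2}, Proposition~\ref{prop:kempfstate}, Theorem~\ref{thm:precise}), the input $\Xi$ is either only quasi-admissible or is admissible on all of $Y(\RR)$, which is trivially $H$-invariant, so your slightly more careful version recovers the intended conclusion in every case of use.
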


\begin{proof}
That $\Theta$ is a bounded $K$-quasi-state with the required admissibility properties follows from
Lemma \ref{lem:unionofstates}, setting $I = H$, and $\Xi_h = h_*\Xi$ for each $h \in H$, and from Lemma \ref{lem:propertiesofstates}(v) and (vi).
That $H \subseteq C_G(\Theta)$ is obvious from the definition of $\Theta$.
\end{proof}

The motivation for our definition of a quasi-state is that it is
precisely what is needed to capture the properties of saturated
convex polyhedral cones in $Y(K)$.  This is the content of our next results.

\begin{lem}
\label{lem:qstate1}
For $\Xi$ a quasi-admissible $K$-quasi-state of $G$ and $g \in G$, we have
$g \cdot Z(\Xi) = Z(g_* \Xi)$.
In particular, $C_G(\Xi) \subseteq N_G(Z(\Xi))$.
\end{lem}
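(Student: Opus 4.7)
The plan is to deduce the equality $g\cdot Z(\Xi) = Z(g_*\Xi)$ as a direct equivariance consequence of the formula
\[
\mu(\Xi,T,\lambda) = \mu(g_*\Xi,\, gTg^{-1},\, g\cdot\lambda)
\]
established in Lemma \ref{lem:propertiesofstates}(ii). First I would note that by Lemma \ref{lem:propertiesofstates}(vi), the quasi-admissibility of $\Xi$ transfers to $g_*\Xi$, so the set $Z(g_*\Xi)$ is well-defined in the sense of Definition \ref{def:Z}; this is essential, since otherwise the membership test ``$\mu(\cdot) \geq 0$'' might depend on the choice of maximal torus.

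Next, to prove $g\cdot Z(\Xi) \subseteq Z(g_*\Xi)$, take $\lambda \in Z(\Xi)$ and pick a maximal torus $T$ of $G$ with $\lambda \in Y_T(K)$ and $\mu(\Xi,T,\lambda) \geq 0$ (treating $\infty$ as $\geq 0$ in the degenerate case where $\Xi(T)=\varnothing$). Then $g\cdot\lambda \in Y_{gTg^{-1}}(K)$, and by Lemma \ref{lem:propertiesofstates}(ii) we obtain $\mu(g_*\Xi,gTg^{-1},g\cdot\lambda) = \mu(\Xi,T,\lambda) \geq 0$, so $g\cdot\lambda \in Z(g_*\Xi)$. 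The reverse inclusion is obtained by the symmetric argument applied to $g^{-1}$ and the quasi-state $g_*\Xi$, using that $g^{-1}_*(g_*\Xi) = \Xi$ from the fact that $G$ acts on the left on quasi-states (Definition \ref{def:quasistate}).

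Finally, for the displayed consequence, if $g \in C_G(\Xi)$ then by definition $g_*\Xi = \Xi$, and so the equality just proved gives $g\cdot Z(\Xi) = Z(g_*\Xi) = Z(\Xi)$, which is precisely the statement $g \in N_G(Z(\Xi))$.

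I do not expect any real obstacle: the lemma is an equivariance statement, and everything reduces cleanly to the compatibility formula of Lemma \ref{lem:propertiesofstates}(ii) together with the preservation of quasi-admissibility under the $G$-action. The only mild care needed is to keep track of the convention $\infty \geq 0$ in the edge case where $\Xi(T)$ is empty for every $T$, but this is handled uniformly by Definition \ref{def:Z}.
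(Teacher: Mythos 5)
Your proof is correct and follows essentially the same route as the paper's: both deduce the equality $g\cdot Z(\Xi)=Z(g_*\Xi)$ directly from the equivariance formula $\mu(g_*\Xi,gTg^{-1},g\cdot\lambda)=\mu(\Xi,T,\lambda)$ of Lemma \ref{lem:propertiesofstates}(ii), and then obtain $C_G(\Xi)\subseteq N_G(Z(\Xi))$ as an immediate consequence. You merely spell out a few details (preservation of quasi-admissibility, the two inclusions, the $\infty$ convention) that the paper leaves implicit.
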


\begin{proof}
Let $\lambda \in Y_T(K)$.
Then, by Lemma \ref{lem:propertiesofstates}(ii), we have
$\mu(g_*\Xi,gTg^{-1},g\cdot \lambda) =  \mu(\Xi,T,\lambda)$.
Therefore, $\lambda \in Z(\Xi)$ if and only if
$g\cdot \lambda \in Z(g_*\Xi)$ and the result follows.
\end{proof}

\begin{lem}
\label{lem:qstate2}
Let $C\subseteq Y(K)$ be a saturated convex polyhedral cone of finite type.
Then there exists a bounded quasi-admissible $K$-quasi-state $\Xi(C)$ such that
$C=Z(\Xi(C))$ and $N_G(C) = C_G(\Xi(C))$.
Moreover, if $K=\QQ$, then we can take $\Xi(C)$ to be integral.
\end{lem}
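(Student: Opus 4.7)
The plan is to define $\Xi(C)$ locally, on each apartment $Y_T(K)$, as a canonical set of defining inequalities for the polyhedral cone $C_T$. Concretely, for each maximal torus $T$ of $G$, the cone $C_T = C\cap Y_T(K)$ is polyhedral in $Y_T(K)$ (by hypothesis), hence so is its dual cone $D_T\subseteq X_T(K)$, by the Minkowski--Weyl theorem (Section \ref{sec:cvxcones}). Since $D_T$ is polyhedral, it has only finitely many extremal rays. I would define $\Xi(C)(T)$ to be a canonical generator on each extremal ray of $D_T$: when $K=\QQ$, take the unique primitive positive integer point on each (necessarily rational) ray, giving an integral quasi-state; when $K=\RR$, take the unique unit vector on each ray, using the $G$-invariant norm on $X_T(\RR)$ obtained by dualising the bilinear form on $Y_T(\RR)$. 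In both cases $\Xi(C)(T)$ is finite and generates $D_T$ as a cone, so $\Xi(C)$ is a $K$-quasi-state.

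To verify $C=Z(\Xi(C))$, I would invoke the bipolar relation: since $C_T$ is a closed convex polyhedral cone, $C_T = \{\lambda\in Y_T(K)\mid \langle\lambda,\alpha\rangle\geq 0 \text{ for all }\alpha\in D_T\}$, and since $\Xi(C)(T)$ generates $D_T$, this set equals $\{\lambda\in Y_T(K)\mid \mu(\Xi(C),T,\lambda)\geq 0\}$. Ranging over all maximal tori $T$ gives $C=Z(\Xi(C))$ at once. For quasi-admissibility, suppose $\lambda\in Y_T(K)$ with $\mu(\Xi(C),T,\lambda)\geq 0$, so $\lambda\in C_T\subseteq C$. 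Given $x\in P_\lambda$, write $x=ul$ with $u\in R_u(P_\lambda)$ and $l\in L_\lambda=C_G(\lambda)$; then $x\cdot\lambda = u\cdot\lambda$, which lies in $C$ by saturation, and also in $Y_{xTx\inverse}(K)$, hence in $C_{xTx\inverse}$. Therefore $\mu(\Xi(C), xTx\inverse, x\cdot\lambda)\geq 0$, as needed.

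For boundedness, fix a maximal torus $T$. Using the invariance of the pairing (\ref{eqn:ipinvce}), the set $(g_*\Xi(C))(T)=g_!\Xi(C)(g\inverse T g)$ is precisely the canonical generator set of the dual in $X_T(K)$ of the cone $g\cdot C_{g\inverse T g}$ in $Y_T(K)$. Since $C$ is of finite type, $\{g\cdot C_{g\inverse T g}\mid g\in G\}$ is finite, so only finitely many dual cones arise, and the union $\bigcup_{g\in G}(g_*\Xi(C))(T)$ is finite. For the last claim, one direction is immediate: if $g\in C_G(\Xi(C))$, then by Lemma \ref{lem:qstate1}, $g\cdot C=g\cdot Z(\Xi(C))=Z(g_*\Xi(C))=Z(\Xi(C))=C$. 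Conversely, if $g\in N_G(C)$, then $g\cdot C_{g\inverse Tg}=(g\cdot C)\cap Y_T(K)=C_T$, so the dual cones in $X_T(K)$ also coincide, namely $g_!(D_{g\inverse Tg})=D_T$. The key point is that the canonical choices from Step~1 are preserved by $g_!$: $g_!$ carries the integer lattice $X_{g\inverse Tg}$ isomorphically onto $X_T$ and preserves primitivity, and it is an isometry for the dual norms (which transform $G$-equivariantly because the forms on $Y_T$ do, by Definition \ref{def:norm}). Hence $g_!\Xi(C)(g\inverse Tg)$ is the canonical generator set of $D_T$, which is $\Xi(C)(T)$, giving $g\in C_G(\Xi(C))$.

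The main obstacle is the last step: ensuring the choice of generators is sufficiently canonical that a normaliser of $C$ automatically permutes them. This forces the slightly artificial-looking choice of primitive integer points (respectively, unit vectors), and requires verifying that the dual norms on $X_T(\RR)$ form a $G$-equivariant family, which uses the full strength of condition (a) in Definition \ref{def:norm}.
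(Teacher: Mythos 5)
Your construction proceeds apartment by apartment via the extremal rays of the dual cone $D_T$ of $C_T$, choosing primitive integer (resp.\ unit) generators as a canonical choice so that $g_!$ for $g\in N_G(C)$ will automatically carry $\Xi(C)(g^{-1}Tg)$ to $\Xi(C)(T)$. This is clean where it applies, but there is a genuine gap: the extremal rays of $D_T$ generate $D_T$ only when $D_T$ is \emph{pointed}, which happens precisely when $C_T$ spans $Y_T(K)$. In general $C_T$ need not be full-dimensional --- for example, the saturated cone above a single simplex $\sigma_P$ with $P$ a proper, non-Borel parabolic is lower-dimensional, cf.\ the generators $\lambda_j$ with $\alpha_j\notin\Pi'$ in the proof of Lemma~\ref{lem:cxcvx}. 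Then $D_T$ has a non-trivial lineality space $D_T\cap(-D_T)$ and consequently \emph{no} extremal rays at all, so your $\Xi(C)(T)$ would be empty and $Z(\Xi(C))_T$ would be all of $Y_T(K)$, not $C_T$. Repairing this by adjoining generators of the lineality space reinstates exactly the canonicality problem you flag at the end: a linear subspace, or a sublattice of $X_T$, has no distinguished finite generating set, so $g_!$ need not fix whatever choice you make.

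The paper sidesteps canonicality entirely. It fixes a base torus $T_0$, lists the finitely many cones $C_1,\dots,C_r$ of the form $g\cdot C_{g^{-1}T_0g}$ (finitely many by the finite-type hypothesis), picks \emph{arbitrary} finite defining sets $D_i\subset X_{T_0}(K)$, and defines $\Xi_0(T)$ by transporting all the relevant $D_i$ along elements of the transporter of $T_0$ to $T$. This $\Xi_0$ is bounded and quasi-admissible with $C=Z(\Xi_0)$, but need not yet be centralised by all of $N_G(C)$. The missing invariance is then obtained by averaging: $\Xi(C):=\bigcup_{g\in N_G(C)}g_*\Xi_0$ is still bounded and quasi-admissible (Lemma~\ref{lem:averaging}), has $N_G(C)\subseteq C_G(\Xi(C))$ by construction, still defines $C$ because each $Z(g_*\Xi_0)=g\cdot C=C$ (Lemma~\ref{lem:qstate1}), and satisfies $C_G(\Xi(C))\subseteq N_G(C)$ again by Lemma~\ref{lem:qstate1}. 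Replacing your canonical-choice step with this averaging device is what you need to make the argument go through regardless of whether $D_T$ is pointed.
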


\begin{proof}
Fix a maximal torus $T_0$ of $G$.
Since $C$ is of finite type, the set $\{g\cdot C_{g\inverse T_0g}\mid g \in G\}$
gives a finite number of cones in $Y_{T_0}(K)$: call these cones $C_1,\ldots,C_r$.
Since $C$ is polyhedral, for each $C_i$ we can find a finite set
$D_i  \subset X_{T_0}(K)$
such that $C_i$ is the cone defined by $D_i$.
Moreover, if $K= \QQ$, then we can pick each $D_i$ to be a subset of $X_{T_0}$.

We define a quasi-state $\Xi_0$ as follows:
For each maximal torus $T$ of $G$, let $H(T)$ be the transporter of the fixed torus $T_0$ to $T$;
i.e., the set of $g \in G$ such that $gT_0g\inverse = T$.
Note that for any $g,h \in H(T)$, we have $gh\inverse \in N_G(T)$ and $h\inverse g \in N_G(T_0)$.
For each $g \in H(T)$, we have $g\inverse\cdot C_T = C_i$ for some $1\leq i \leq r$,
and we set $D_g = g_!D_i$, which gives a finite subset of $X_T(K)$.
Note that if $g,h \in H(T)$ are such that $gh\inverse \in T$, then
$D_g = D_h$, since $T$ is abelian.
So there are only finitely many different subsets $D_g$ arising in this way
(we get a number less than or equal to the order of the Weyl group of $G$).
Moreover, for any $g \in H(T)$, we see by construction that $C_T$ is the cone defined by $D_g$ in
$Y_T(K)$.
Define a quasi-state $\Xi_0$ by
$$
\Xi_0(T) := \bigcup_{g \in H(T)} D_g \qquad \textrm{ for each maximal torus } T \textrm{ of } G.
$$
Note that this is a finite set for each $T$, so $\Xi_0$ is a $K$-quasi-state, and $\Xi_0$ is integral if $K = \QQ$.
Also, since each $D_g$ defines the cone $C_T$, we have that $\Xi_0(T)$ defines the cone $C_T$ in $Y_T(K)$.

We claim that $\Xi_0$ is bounded.
To see this, let $T$ be a maximal torus of $G$, and let $g \in G$.
Then we have $H(g\inverse Tg) = g\inverse H(T)$,
so
\begin{equation}\label{eqn:technical}
(g_*\Xi_0)(T)
= g_!\left(\Xi_0(g\inverse Tg)\right)
= g_!\left(\bigcup_{h \in H(g\inverse T g)} D_h \right)
= g_!\left(\bigcup_{x \in H(T)} D_{g\inverse x} \right).
\end{equation}
Now each $D_{g\inverse x}$ has the form $(g\inverse x)_!D_i$ for some $1 \leq i \leq r$,
so $g_!D_{g\inverse x}$ has the form $x_!D_i$ for some $1 \leq i \leq r$.
Further, if $x,y \in H(T)$ are such that $y\inverse x \in T_0$, we have $x_!D_i = y_!D_i$ for all $i$.
Hence there are only finitely many possibilities for $x_!D_i$ as $x$ runs over $H(T)$ and $i$ runs over the indices $1,\ldots,r$.
Since each $D_i$ is a finite set, we can conclude that the set
$$
\bigcup_{1\leq i \leq r} \bigcup_{x \in H(T)} x_!D_i
$$
is finite.
Since \eqref{eqn:technical} shows that $(g_*\Xi_0)(T)$ is contained in this set for all $g \in G$,
we see that $\Xi_0$ is bounded, as claimed.

We claim further that $\Xi_0$ is quasi-admissible.
To see this, suppose $T$ is a maximal torus of $G$, and $\lambda \in Y_T(K)$ is such that $\mu(\Xi_0,T,\lambda) \geq 0$.
Then, since $\Xi_0(T)$ defines the cone $C_T$ in $Y_T(K)$, we have $\lambda \in C$.
Now for any $x \in P_\lambda$, we have $x\cdot \lambda \in C$, since $C$ is saturated.
Thus $x\cdot \lambda \in C_{xTx\inverse}$, which is the cone in $Y_{xTx\inverse}(K)$ defined by $\Xi_0(xTx\inverse)$.
Thus $\mu(\Xi_0,xTx\inverse,x\cdot\lambda) \geq 0$, as required.
Moreover, since $\Xi_0(T)$ defines the cone $C_T$ in each $Y_T(K)$, we have $C = Z(\Xi_0)$.

Finally, we can prove the result claimed.
We define a new quasi-state
$$
\Xi := \Xi(C) := \bigcup_{g \in N_G(C)} g_*\Xi_0.
$$
Then, by Lemma \ref{lem:averaging}, since $\Xi_0$ is a bounded quasi-admissible $K$-quasi-state, $\Xi$ is a bounded quasi-admissible $K$-quasi-state,
and $N_G(C)\subseteq C_G(\Xi)$.
Moreover, if $K = \QQ$, then $\Xi$ is integral, since $\Xi_0$ is.
Since $C = Z(\Xi_0)$, thanks to Lemma \ref{lem:qstate1} we have
$Z(g_*\Xi_0) = g\cdot Z(\Xi_0) =  g\cdot C =  C$ for each $g \in N_G(C)$,
so $C = Z(\Xi)$.
Lemma \ref{lem:qstate1} also shows that $C_G(\Xi) \subseteq N_G(C)$, so we are done.
\end{proof}

\begin{rem}
Note that the construction of the quasi-state $\Xi(C)$ associated to
the cone $C$ in Lemma \ref{lem:qstate2}
depends on the choice of the sets $D_i$ in the first paragraph of the proof,
and different choices here may give rise to different quasi-states.
However, $\Xi(C)$ does enjoy the following ``functorial'' property:
for any $g \in G$, the quasi-state $g_*(\Xi(C))$ defines
the cone $g\cdot C$ in $Y(K)$, and $N_G(g\cdot C) = C_G(g_*\Xi(C))$.
\end{rem}

\begin{cor}
\label{cor:qstatecone}
Let $\Xi$ be a bounded quasi-admissible $K$-quasi-state.
Then $Z(\Xi)$ is a saturated convex polyhedral cone
of finite type in $Y(K)$.
In addition, $C_G(\Xi) \subseteq N_G(Z(\Xi))$.
Conversely, let $C\subseteq Y(K)$ be a saturated convex polyhedral
cone of finite type.  Then there exists a bounded quasi-admissible
$K$-quasi-state $\Xi(C)$ such that $C = Z(\Xi(C))$.
Moreover, we can choose $\Xi(C)$ so that
$N_G(C) = C_G(\Xi(C))$.
\end{cor}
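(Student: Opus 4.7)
The converse direction is exactly Lemma~\ref{lem:qstate2}, so the content of the proposal is to establish the forward direction: given a bounded quasi-admissible $K$-quasi-state $\Xi$, the subset $Z(\Xi)$ of $Y(K)$ is a saturated convex polyhedral cone of finite type, and $C_G(\Xi) \subseteq N_G(Z(\Xi))$. The final inclusion is immediate from Lemma~\ref{lem:qstate1}, so I will focus on verifying the four structural properties of $Z(\Xi)$.

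The plan is to work torus by torus, using $Z(\Xi)_T = \{\lambda\in Y_T(K) \mid \mu(\Xi,T,\lambda)\geq 0\}$, which is well defined independently of the torus by Lemma~\ref{lem:propertiesofstates}(iv) thanks to quasi-admissibility. First I would show convexity and the polyhedral/cone properties: for each maximal torus $T$ of $G$, the set $Z(\Xi)_T$ is exactly the cone in $Y_T(K)$ defined by the finite set $\Xi(T)\subseteq X_T(K)$, i.e.\ the intersection of the closed half-spaces $\{\lambda : \langle\lambda,\alpha\rangle\geq 0\}$ as $\alpha$ ranges over $\Xi(T)$. This is manifestly a convex polyhedral cone in $Y_T(K)$, so by Definition~\ref{def:convex-saturated} the set $Z(\Xi)$ is a convex polyhedral cone in $Y(K)$.

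Next I would verify saturation. If $\lambda\in Z(\Xi)$, pick a maximal torus $T$ with $\lambda\in Y_T(K)$ and $\mu(\Xi,T,\lambda)\geq 0$. For any $u\in R_u(P_\lambda)\subseteq P_\lambda$, quasi-admissibility applied to the pair $(\lambda,u)$ gives $\mu(\Xi,uTu^{-1},u\cdot\lambda)\geq 0$, so $u\cdot\lambda\in Z(\Xi)$. Thus $Z(\Xi)$ is saturated.

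The step requiring a bit of care, and the one I expect to be the main point of the argument, is finite type. By Lemma~\ref{lem:qstate1}, for every $g\in G$ and every maximal torus $T$ we have $g\cdot Z(\Xi)_{g^{-1}Tg} = Z(g_*\Xi)_T$, and this cone is determined by the subset $(g_*\Xi)(T)\subseteq X_T(K)$. Because $\Xi$ is bounded, the set $F := \bigcup_{g\in G}(g_*\Xi)(T)$ is finite, so $(g_*\Xi)(T)$ ranges, as $g$ varies over $G$, over a subset of the finite power set $2^F$. Hence there are only finitely many distinct subsets $(g_*\Xi)(T)$, and thus only finitely many distinct cones $Z(g_*\Xi)_T = g\cdot Z(\Xi)_{g^{-1}Tg}$. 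This is exactly the condition that $Z(\Xi)$ is of finite type. Finally, Lemma~\ref{lem:qstate1} gives $C_G(\Xi)\subseteq N_G(Z(\Xi))$, which completes the forward direction; the converse, together with the stronger assertion $N_G(C) = C_G(\Xi(C))$, is precisely the conclusion of Lemma~\ref{lem:qstate2}.
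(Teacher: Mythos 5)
Your proposal is correct and follows essentially the same route as the paper's proof: identify $Z(\Xi)_T$ as the polyhedral cone defined by $\Xi(T)$, derive saturation from quasi-admissibility, derive finite type from boundedness via Lemma~\ref{lem:qstate1}, and invoke Lemma~\ref{lem:qstate2} for the converse. You have simply made explicit the details (in particular the finite-type argument via subsets of the finite set $\bigcup_{g\in G}(g_*\Xi)(T)$ and the correct attribution of $C_G(\Xi)\subseteq N_G(Z(\Xi))$ to Lemma~\ref{lem:qstate1}) that the paper leaves implicit.
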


\begin{proof}
Let $C= Z(\Xi)$.  Since $\mu(\Xi,T,\lambda)=
\min_{\alpha\in \Xi(T)} \langle \lambda,\alpha\rangle$,
we have $\mu(\Xi,T,\lambda) \geq 0$ if and only if
$\langle \lambda,\alpha\rangle \geq 0$ for all $\alpha \in \Xi(T)$.
Hence $C_T$ is the convex polyhedral cone defined by the finite set $\Xi(T)$.
It follows easily from the boundedness of $\Xi$ that $C$ is of finite type,
and the quasi-admissibility of $\Xi$ implies that $C$ is saturated.

The remaining statements follow from
Lemma \ref{lem:qstate2}.
\end{proof}

\begin{defn}
Corollary \ref{cor:qstatecone} provides us with the key link between cones and quasi-states.
In view of this corollary, given a convex cone $C$ in $Y(K)$ and a quasi-admissible $K$-quasi-state $\Xi$,
we say that \emph{$\Xi$ defines $C$}, or \emph{$C$ is defined by $\Xi$}, if $C = Z(\Xi)$.
\end{defn}

\begin{rem}
\label{rem:needtwostates}
Suppose $C = Z(\Xi)$ is a convex cone defined by the quasi-admissible quasi-state $\Xi$.
If $C$ is not $Y(K)$-cr, one might hope that this is reflected in the values of
the numerical functions $\mu(\Xi,T,\cdot)$: for example, if there exists $\lambda \in C$
such that $0< \mu(\Xi,T,\lambda)<\infty$ for some maximal torus $T$, then we have
$\mu(\Xi,T,-\lambda)<0$, so $-\lambda \not\in C$.
The quasi-admissibility of $\Xi$ then
implies that $-(u\cdot \lambda)\not\in C$ for any $u\in R_u(P_\lambda)$, so $C$ is not $Y(K)$-cr.
However, it can happen that the numerical functions $\mu(\Xi,T,\cdot)$ are
identically zero on $C$ --- we might have $0\in \Xi(T)$ for every maximal torus $T$,
for example --- and this doesn't give us enough information to work with.
In order to get around this problem, we are forced to consider \emph{two} quasi-states:
one defining $C$, and one picking out certain points of $C$ without an opposite in $C$.
This is the reason that our results below (and those in \cite{kempf}) involve two quasi-states
$\Xi$ and $\Upsilon$.
\end{rem}




We continue by recalling an important lemma of Kempf \cite[Lem.\ 2.3]{kempf}.

\begin{lem}
\label{lem:cvxopt}
Let $E$ be a finite-dimensional real vector space with a
norm $\left\|\, \right\|$ arising from a positive definite $\RR$-valued
bilinear form.
Let $A$ and $B$ be finite subsets of $E^*$.  Define $a, b\colon E\ra \RR$ by
$a(v)=\min_{\alpha\in A}\alpha(v)$
and $b(v) = \min_{\beta\in B}\beta(v)$.
Assume that the cone $C = \{v\in E\mid a(v) \geq 0\}$ contains more than just the zero vector.
Then the following hold:
\begin{itemize}
\item[(i)] The function $v \mapsto b(v)/\left\|v\right\|$ attains a
maximum value $M$ on $C\setminus\{0\}$.
\item[(ii)] If the maximum value from (i) is finite and positive, then there is a
unique ray $R$
in $C$ such that for all $v\in C$, we have $b(v)/\left\|v\right\|=M$
if and only if $v\in R$.
\end{itemize}
Suppose further that the inner product and each function in $A$ and $B$ are integer-valued on some lattice $L$ in $E$.
Then the following hold:
\begin{itemize}
\item[(iii)] $L\cap R$ is non-empty.
\item[(iv)] $L\cap R$ consists of all positive integral multiples of the unique shortest element in $L\cap R$.
\end{itemize}
\end{lem}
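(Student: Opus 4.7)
The plan is to dispatch the four parts in sequence, exploiting the positive homogeneity and piecewise linearity of $b$ (a minimum of finitely many linear functionals) and the strict convexity of $\|\cdot\|^2$. For (i), I would use that $b(v)/\|v\| = b(v/\|v\|)$, so maximising this ratio on $C\setminus\{0\}$ is the same as maximising the continuous function $b$ over the non-empty compact set $C \cap S$, where $S$ is the unit sphere of $E$. The maximum is then attained, and is automatically finite.

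For (ii), assume $M > 0$. The key manoeuvre is to convert the maximisation of $b/\|v\|$ into a minimisation of $\|\cdot\|^2$: the map $v \mapsto v/b(v)$ sends any point of $\{v \in C : b(v) > 0\}$ into the closed convex set $C' := \{v \in C : b(v) \geq 1\}$, and one has $b(v)/\|v\| = 1/\|v/b(v)\|$. The set $C'$ is non-empty (rescale any maximiser from (i)) and $\|\cdot\|^2$ is strictly convex and coercive, so it attains its infimum on $C'$ at a unique point $w^*$. A short check shows $b(w^*) = 1$ (otherwise one could shrink $w^*$ inside $C'$), and then $R := \RR_{>0} w^*$ is the unique ray on which $b/\|v\|$ attains the value $M$: uniqueness comes from the uniqueness of $w^*$ together with the scale-invariance of $b/\|v\|$.

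For (iii), the extra hypothesis makes $C'$ a rational polyhedron, since its defining functionals lie in $L^* \otimes \QQ$. The minimiser $w^*$ lies in the relative interior of a unique face $F$ of $C'$, whose affine hull is cut out by the active constraints and hence is defined over $\QQ$ with respect to $L$. Since the inner product is also rational on $L$, a standard orthogonal-projection argument identifies $w^*$ as the closest point to the origin on this rational affine subspace, which is itself rational; hence $w^* \in (L \otimes \QQ) \cap R$ and some positive integer multiple of $w^*$ lies in $L \cap R$. The main obstacle is packaging this rational polyhedral step cleanly; everything else is routine convex analysis. For (iv), $L \cap R$ is a discrete subset of the one-dimensional ray $R$, so it has a shortest non-zero element $v_0$; if $v = c v_0 \in L \cap R$ with $c > 0$ and $c = n + \epsilon$ for $n \in \ZZ_{\geq 0}$ and $0 \leq \epsilon < 1$, then $\epsilon v_0 = v - n v_0 \in L \cap R$, so minimality of $v_0$ forces $\epsilon = 0$, and $L \cap R = \ZZ_{\geq 0} v_0$ as claimed.
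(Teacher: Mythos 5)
The paper does not supply its own argument for this lemma; it simply cites \cite[Lem.~2.3]{kempf}. Your proof is a correct self-contained verification, and the key mechanism --- reducing the ratio maximisation to a norm minimisation over a closed convex set and invoking strict convexity of $\left\|\cdot\right\|^2$ to get uniqueness --- is precisely the standard convex-analytic route (and is what Kempf does). Two small remarks on your write-up. In part (i) you assert the maximum is ``automatically finite''; this is true only when $B\neq\varnothing$ (so that $b$ is a genuine $\RR$-valued function), and indeed the paper's remark immediately following the lemma notes that if $B=\varnothing$ then $M=\infty$ and parts (ii)--(iv) do not apply. In part (ii), it would be worth recording explicitly that the minimum of $\left\|\cdot\right\|$ on $C'$ equals $1/M$: for any $w\in C'$ one has $b(w)/\left\|w\right\|\leq M$ and $b(w)\geq 1$, hence $\left\|w\right\|\geq 1/M$, with equality attained by rescaling a maximiser from (i); this is what guarantees that every maximising $v$ has $v/b(v)=w^*$ and hence that uniqueness of $w^*$ forces uniqueness of the ray. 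Your rational polyhedron argument for (iii) is correct: at $w^*$ the orthogonality conditions express $w^*$ as the solution of a linear system with matrix built from the Gram matrix of the inner product and the active constraint functionals, all integer-valued on $L$, so the (unique) solution lies in $L\otimes_{\ZZ}\QQ$ and some positive integer multiple lands in $L$. Part (iv) is the usual Euclidean-division-on-a-ray argument and is fine.
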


\begin{rem}
Note that if the set $B$ in Lemma \ref{lem:cvxopt} is empty, we have $M=\infty$.
Parts (ii)--(iv) do not apply in this case, because $M$ is not finite.
\end{rem}

Translating the above result into our setting gives the following corollary.

\begin{cor}
\label{cor:toruscase}
Let $\Xi$ and $\Upsilon$ be real quasi-states and let
$T$ be a maximal torus of $G$.
Let $C_T = \{\lambda \in Y_T(\RR)\mid \mu(\Xi,T,\lambda) \geq 0\}$.
Then the following hold:
\begin{itemize}
\item[(a)] The function $\lambda \mapsto \mu(\Upsilon,T,\lambda)/\left\|\lambda\right\|$
has a maximum value $M(T,\Xi,\Upsilon)$ on
$C_T\setminus\{0\}$, if
this set is non-empty.
\item[(b)] If the maximum value from (i) is finite and positive, then the following hold:
\begin{itemize}
\item[(i)] There exists a unique ray $R$ in $C_T\setminus\{0\}$ such that
$\mu(\Upsilon,T,\lambda)/\left\|\lambda\right\| = M(T,\Xi,\Upsilon)$ if and only if $\lambda \in R$.
\item[(ii)] If $\Xi$ and $\Upsilon$ are rational quasi-states,
then $R\cap Y_T(\QQ)$ is a ray in $Y_T(\QQ)$.
\item[(iii)] If $\Xi$ and $\Upsilon$ are integral quasi-states, then $R\cap Y_T$ is non-empty and consists
of all positive integer multiples of the shortest element in $R\cap Y_T$.
\end{itemize}
\end{itemize}
\end{cor}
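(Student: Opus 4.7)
The plan is to deduce Corollary \ref{cor:toruscase} directly from Lemma \ref{lem:cvxopt} by taking the vector space $E$ there to be $Y_T(\RR)$, which is finite-dimensional and carries a norm coming from a positive definite real bilinear form (the $\RR$-linear extension of the integer-valued form supplied by Definition \ref{def:norm}(b)). Under the identification $E^*=X_T(\RR)$ via the pairing $\langle\ ,\ \rangle$, the finite sets $A:=\Xi(T)$ and $B:=\Upsilon(T)$ are finite subsets of $E^*$, and the associated functions $a,b\colon E\to\RR$ of Lemma \ref{lem:cvxopt} are precisely $\mu(\Xi,T,\cdot)$ and $\mu(\Upsilon,T,\cdot)$. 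In particular the cone $C$ of Lemma \ref{lem:cvxopt} is exactly our $C_T$.

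With this dictionary in place, part (a) of the corollary is exactly Lemma \ref{lem:cvxopt}(i), and part (b)(i) is exactly Lemma \ref{lem:cvxopt}(ii). For part (b)(iii), assume $\Xi$ and $\Upsilon$ are integral, so that $\Xi(T)\cup\Upsilon(T)\subseteq X_T$. Take the lattice $L:=Y_T\subseteq Y_T(\RR)$; the elements of $A\cup B$ are integer-valued on $L$ by definition of $X_T$, and the inner product is integer-valued on $L$ by Definition \ref{def:norm}(b). Hence the hypotheses of parts (iii) and (iv) of Lemma \ref{lem:cvxopt} are met, and these give precisely (b)(iii).

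For (b)(ii), assume instead that $\Xi$ and $\Upsilon$ are rational. Choose $n\in\NN$ so that $n\alpha\in X_T$ for every $\alpha\in\Xi(T)\cup\Upsilon(T)$; such an $n$ exists because both sets are finite. Replacing $\Xi(T)$ and $\Upsilon(T)$ by $n\Xi(T)$ and $n\Upsilon(T)$ does not change $C_T$ (scaling by $n>0$ preserves the inequality $\langle\lambda,\alpha\rangle\geq0$) and merely rescales $\mu(\Upsilon,T,\cdot)/\|\cdot\|$ by the positive constant $n$, so the maximizing ray $R$ of (b)(i) is unchanged. Applying the already-established integral case (b)(iii) to these rescaled finite sets, we conclude that $R\cap Y_T$ is non-empty. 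Since $R$ is a ray through the origin in $Y_T(\RR)$, the presence of a single non-zero element of $Y_T\subset Y_T(\QQ)$ in $R$ forces $R\cap Y_T(\QQ)$ to be the $\QQ_{\geq 0}$-span of that element, which is a ray in $Y_T(\QQ)$, as required.

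There is no serious obstacle: all of the content lives in Lemma \ref{lem:cvxopt}, and the work is purely bookkeeping to verify that the hypotheses on norms, lattices and integrality translate correctly. The one spot that needs mild care is the reduction from rational to integral quasi-states in (b)(ii), where one must observe that rescaling the defining characters changes neither the cone nor the maximizing ray, so that the integral case can be invoked without loss of generality.
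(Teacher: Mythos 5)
Your proposal is correct and follows essentially the same route as the paper: parts (a), (b)(i) and (b)(iii) are read off directly from Lemma \ref{lem:cvxopt} with $E = Y_T(\RR)$, $A = \Xi(T)$, $B = \Upsilon(T)$ and, for the integral case, $L = Y_T$. The only (minor) difference is in (b)(ii): you reduce to the integral case by rescaling the finite sets $\Xi(T)$ and $\Upsilon(T)$ by a positive integer $n$ (which leaves $C_T$ and the maximizing ray $R$ unchanged) and keep $L = Y_T$, whereas the paper instead keeps the rational quasi-states and passes to a sublattice of $Y_T$ on which the numerical functions are integer-valued; these are two equivalent bookkeeping devices for meeting the integrality hypothesis of Lemma \ref{lem:cvxopt}(iii)--(iv).
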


\begin{proof}
Parts (a) and (b)(i) follow from Lemma \ref{lem:cvxopt}(i) and (ii),
setting $E = Y_T(\RR)$ with the norm $\left\|\, \right\|$
we have fixed, and with $A=\Xi(T)$ and $B=\Upsilon(T)$.



For (b)(ii), first note that the norm on $Y_T(\RR)$
arises from an integer-valued form on $Y_T$ (by Definition \ref{def:norm}).
If $\Xi$ and $\Upsilon$ are $\QQ$-quasi-states, then their numerical functions take rational values on $Y_T$ and there is a
sublattice of $Y_T$ upon which they take integer values.
By parts (iii) and (iv) of Lemma \ref{lem:cvxopt}, the ray defined by $\lambda$ intersects this lattice,
and so $R\cap Y_T(\QQ)$ is non-empty, and is hence a ray in $Y_T(\QQ)$.

For (b)(iii), we can apply Lemma \ref{lem:cvxopt}(iii) and (iv) with $L=Y_T$.
\end{proof}

\begin{rem}
\label{rem:locmax}
The previous result shows that if $\Xi$ is a quasi-admissible quasi-state, $C= Z(\Xi)$,
and $\Upsilon$ is another quasi-state, then $\Upsilon$ can be used to pick out certain rays in
the subsets $C_T$ of $C$ as $T$ ranges over the maximal tori of $G$.
Roughly speaking, each such ray is the set of points in $Y_T(\RR)$ where the numerical function $\mu(\Upsilon,T,\cdot)$ attains a maximum, for some maximal torus $T$ of $G$; we call these points local maxima.
The key to Kempf's constructions in \cite{kempf},
and to our generalizations in this paper, is to impose an extra condition
on $\Upsilon$ to ensure that these local maxima patch together nicely inside all of $Y(\RR)$;
this is where the notion of admissibility becomes important.
We formalize these ideas in the following definition.
\end{rem}

\begin{defn}\label{def:localmaxima}
Let $\Xi$ and $\Upsilon$ be bounded real quasi-states, and suppose $\Xi$ is quasi-admissible.
Let $C= Z(\Xi) \subseteq Y(\RR)$.
For each maximal torus $T$ of $G$, if $C_T = \{0\}$, then set $M(T) = -\infty$.
Otherwise, let $M(T) = M(T,\Xi,\Upsilon)$ be the maximum value provided by
Corollary \ref{cor:toruscase}.
We call a point $\lambda \in C$ a \emph{local maximum of $\Upsilon$ in C}
if there exists a maximal torus $T$ of $G$ such that $\lambda \in C_T$ and
$0 < \mu(\Upsilon,T,\lambda)/\left\|\lambda\right\| = M(T) < \infty$.
\end{defn}

\begin{rem}
Note that if $C = Z(\Xi)$ and $\Upsilon$ are as in Definition \ref{def:localmaxima},
then $\Upsilon$ has a local maximum on $C$ if and only if there exists a maximal torus $T$ of $G$ and
some $\lambda \in C_T$ such that $0<\mu(\Upsilon,T,\lambda)<\infty$.
\end{rem}

We now present a generalization of Kempf's central result \cite[Thm.\ 2.2]{kempf}.
Kempf's proof goes through almost word for word if one replaces the bounded admissible states $\Xi$ and $\Upsilon$ with bounded admissible quasi-states.
The essential difference between our result and Kempf's is that in Theorem~\ref{thm:kempf2.2}(b) we
just require that the quasi-state $\Xi$ is quasi-admissible, and that the quasi-state $\Upsilon$
is admissible at its local maxima in $C = Z(\Xi)$, cf. Definition \ref{def:localmaxima}.




\begin{thm}
\label{thm:kempf2.2}
Let $\Xi$ and $\Upsilon$ be bounded real quasi-states, and suppose $\Xi$ is quasi-admissible.
Let $C= Z(\Xi) \subseteq Y(\RR)$.
For each maximal torus $T$ of $G$ such that $C_T \ne \{0\}$ let $M(T) = M(T,\Xi,\Upsilon)$ be as in Definition \ref{def:localmaxima}.
Then the following hold:
\begin{itemize}
\item[(a)] The set $\{M(T) \mid T \textrm{ is a maximal torus of } G,\ M(T)<\infty\}$
is finite, and hence has a maximum value $M$.
\item[(b)] Suppose $M$ from (a) is positive, so that $\Upsilon$ has local maxima in $C$.
If $\Upsilon$ is admissible at its local maxima in $C$,
then the set $\Lambda := \Lambda(\Xi,\Upsilon)$ of
$\lambda \in C$ such that $\left\|\lambda\right\|=1$ and
$\mu(\Upsilon,T,\lambda) = M$ for some maximal torus $T$ of $G$
has the following properties:
\begin{itemize}
\item[(i)] $\Lambda$ is non-empty;
\item[(ii)] there is a parabolic subgroup $P=P(\Xi,\Upsilon)$ of $G$ such that
$P= P_\lambda$ for any $\lambda \in \Lambda$;
\item[(iii)] $R_u(P)$ acts simply transitively on $\Lambda$;
\item[(iv)] for each maximal torus $T'$ of $P$
there is a unique $\lambda \in \Lambda \cap Y_{T'}(\RR)$;
\item[(v)] if $\Xi$ and $\Upsilon$ are rational quasi-states, then
    some positive
multiple of each $\lambda \in \Lambda$ lies in $Y$.
\end{itemize}
\end{itemize}
\end{thm}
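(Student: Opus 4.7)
The plan is to follow Kempf's strategy in the proof of \cite[Thm.~2.2]{kempf}, using Corollary~\ref{cor:toruscase} as the torus-level optimization tool and invoking admissibility of $\Upsilon$ only at local maxima. For part (a), fix a maximal torus $T_0$; for any maximal torus $T$ and any $g\in G$ with $gT_0g^{-1}=T$, Lemma~\ref{lem:propertiesofstates}(ii) identifies $(Y_T(\RR),\|\cdot\|,\Xi(T),\Upsilon(T))$ isometrically with $(Y_{T_0}(\RR),\|\cdot\|,g^{-1}_*\Xi(T_0),g^{-1}_*\Upsilon(T_0))$, so $M(T)$ depends only on the pair $(g^{-1}_*\Xi(T_0),g^{-1}_*\Upsilon(T_0))$. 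Boundedness of $\Xi$ and $\Upsilon$ forces this pair to range over a finite set, yielding finiteness of the set of $M(T)$ values and the existence of a maximum $M$.

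For (b)(i), choose a maximal torus $T$ with $M(T)=M>0$; Corollary~\ref{cor:toruscase}(b)(i) produces a unit vector $\lambda\in C_T$ with $\mu(\Upsilon,T,\lambda)=M$, so $\lambda\in\Lambda$. For (b)(ii), given $\lambda_1,\lambda_2\in\Lambda$ with $\lambda_i\in Y_{T_i}(\RR)$, I pick a maximal torus $T$ inside $P_{\lambda_1}\cap P_{\lambda_2}$ and, using that the maximal tori of $P_{\lambda_i}$ are $P_{\lambda_i}$-conjugate, choose $x_i\in P_{\lambda_i}$ with $x_iT_ix_i^{-1}=T$. Each $\lambda_i$ is a local maximum, so admissibility of $\Upsilon$ at $\lambda_i$ gives $\mu(\Upsilon,T,x_i\cdot\lambda_i)=M$, while Lemma~\ref{lem:propertiesofstates}(iv) applied to the quasi-admissible $\Xi$ shows $x_i\cdot\lambda_i\in C_T$; both are unit vectors, so Corollary~\ref{cor:toruscase}(b)(i) forces $x_1\cdot\lambda_1=x_2\cdot\lambda_2$. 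Since $P_{x_i\cdot\lambda_i}=x_iP_{\lambda_i}x_i^{-1}=P_{\lambda_i}$, we obtain $P_{\lambda_1}=P_{\lambda_2}=:P$.

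Parts (iii)--(v) follow from the same analysis. The equality $\lambda_2=(x_2^{-1}x_1)\cdot\lambda_1$ with $x_2^{-1}x_1\in P$ and the Levi decomposition $x_2^{-1}x_1=u\ell$ in $R_u(P)\rtimes L_{\lambda_1}$ (noting $L_{\lambda_1}=C_G(\lambda_1)$) gives $\lambda_2=u\cdot\lambda_1$ for some $u\in R_u(P)$; conversely, for any $u\in R_u(P)$, saturatedness of $C$ together with admissibility of $\Upsilon$ at $\lambda_1$ forces $u\cdot\lambda_1\in\Lambda$, so $R_u(P)$ acts transitively, and simply transitively because $R_u(P)\cap L_{\lambda_1}=\{1\}$. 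For (iv), a maximal torus $T'$ of $P$ is conjugate to some $T_1$ containing $\lambda_1$ via some $p\in P$; decomposing $p=u\ell$ with $\ell\in L_{\lambda_1}$ yields $u\cdot\lambda_1=p\cdot\lambda_1\in Y_{T'}(\RR)\cap\Lambda$, and uniqueness is another direct application of Corollary~\ref{cor:toruscase}(b)(i) together with admissibility at local maxima. Part (v) is immediate from Corollary~\ref{cor:toruscase}(b)(ii) applied to any $\lambda\in\Lambda$.

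The main obstacle, and the reason for the delicacy flagged in Remark~\ref{rem:kempforus}, is that I do not have Kempf's global admissibility of $\Upsilon$ but only admissibility at points of $\Lambda$, while $\Xi$ is only quasi-admissible. All the arguments above therefore have to be arranged so that every invocation of admissibility of $\Upsilon$ occurs at a point of $\Lambda$, and every use of $\Xi$ goes through Lemma~\ref{lem:propertiesofstates}(iv); the key trick is that conjugations are always performed by elements of $P_{\lambda_i}$ for some $\lambda_i\in\Lambda$, which is exactly enough to keep these weaker hypotheses in play.
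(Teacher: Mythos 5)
Your proof is correct and follows essentially the same strategy as the paper's, which is itself closely modeled on Kempf's argument for \cite[Thm.~2.2]{kempf}: part (a) by $G$-translating everything into a single apartment and invoking boundedness, part (b) by placing two optimal unit cocharacters into a common apartment inside $P_{\lambda_1}\cap P_{\lambda_2}$ and appealing to the uniqueness in Corollary~\ref{cor:toruscase}, with the key observation being that all conjugations are by elements of $P_{\lambda_i}$ so that admissibility of $\Upsilon$ at local maxima and quasi-admissibility of $\Xi$ are exactly enough. One small correction: to conclude $x_i\cdot\lambda_i\in C_T$ you should invoke the definition of quasi-admissibility of $\Xi$ directly (as the paper does), not Lemma~\ref{lem:propertiesofstates}(iv) --- that part only covers changing the ambient torus while fixing the cocharacter as an element of $Y(\RR)$, whereas $x_i\cdot\lambda_i$ is in general a genuinely different element of $Y(\RR)$ from $\lambda_i$ when $x_i\notin L_{\lambda_i}$.
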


\begin{proof}
We follow the idea of Kempf's proof \cite[Thms.\ 2.2 and 3.4]{kempf} closely.
Fix a maximal torus $T_0$ of $G$, and let $T$ be any other maximal torus.
Then $T = g\inverse T_0g$ for some $g \in G$.
Since Lemma \ref{lem:propertiesofstates}(ii) implies that
$g\cdot C_T = \{\lambda \in Y_{T_0}(\RR) \mid \mu(g_*\Xi,T_0,\lambda) \geq 0\} = Z(g_*\Xi)_{T_0}$
and that
$\mu(\Upsilon,T,\lambda)= \mu(g_* \Upsilon, gTg\inverse, g\cdot \lambda)$
for any $\lambda\in Y_T(\RR)$, the maximum values
of the function $\lambda \mapsto \mu(\Upsilon,T,\lambda)/\left\|\lambda\right\|$ on
$C_T$ and
the function $\lambda \mapsto \mu(g_*\Upsilon,T_0,\lambda)/\left\|\lambda\right\|$ on
$g\cdot C_T$ are equal, and this maximum value is $M(T)$.
Since $\Xi$ and $\Upsilon$ are bounded, there are only finitely many
possibilities for $g_*\Xi$ and $g_*\Upsilon$,
and so there is only a finite number of values $M(T)$ arising.
This proves (a).

Now assume that $M$ is positive, so that $\Upsilon$ has local maxima in $C$,
and suppose that $\Upsilon$ is admissible at its local maxima in $C$.
Choose a local maximum $\lambda_1 \in Y(\RR)\setminus\{0\}$ such that
$\lambda_1 \in C_{T_1}$ for some maximal torus $T_1$ and
$\mu(\Upsilon,T_1,\lambda_1)/\left\|\lambda_1\right\|=M$.
Multiplying $\lambda_1$ by a
positive scalar, we can ensure that $\left\|\lambda_1\right\| =1$.
This proves part (b)(i).

Now suppose that $\lambda_2$ is any other
element of $\Lambda$, and let $T_2$ be a maximal torus for which
$\lambda_2 \in C_{T_2}$, $\left\|\lambda_2\right\| =1$ and
$\mu(\Upsilon,T_2,\lambda_2) = M$.
We can choose a maximal torus
$T\subseteq P_{\lambda_1}\cap P_{\lambda_2}$.
There exists $x_1\in P_{\lambda_1}$ such that
$x_1T_1x_1\inverse = T$, and hence $x_1\cdot\lambda_1 \in Y_T(\RR)$.
Likewise there exists
$x_2\in P_{\lambda_2}$ such that
$x_2T_2x_2\inverse = T$, and hence $x_2\cdot \lambda_2\in Y_T( \RR)$.
Note that we have $0 \leq \mu(\Xi,T,x_i\cdot \lambda_i) < \infty$ for $i=1,2$,
by the quasi-admissibility of $\Xi$,
so $x_i \cdot \lambda_i \in C_T$ for $i=1,2$.
Moreover, we have
$\mu(\Upsilon,T,x_i\cdot\lambda_i)/\left\|x_i\cdot\lambda_i\right\|
= \mu(\Upsilon,T_i,\lambda_i)/\left\|\lambda_i\right\| = M$ for
$i=1,2$, by Eqn.\ \eqref{eqn:ipinvce}, the admissibility of
$\Upsilon$ at local maxima of $C$, and the $G$-invariance of the norm.
But $M$ is the maximum possible finite value of
$\mu(\Upsilon,T',\lambda)/\left\|\lambda\right\|$
on $C_{T'}$ as $T'$ ranges over all maximal tori of $G$,
hence is the maximum value on $C_T$.
By the uniqueness statement in Corollary \ref{cor:toruscase}(b)(i), we conclude
that, as
$\left\|x_1\cdot\lambda_1\right\| = \left\|x_2\cdot\lambda_2\right\| =1$,
we have $x_1\cdot\lambda_1 = x_2\cdot\lambda_2$.
Thus $P_{\lambda_1}= P_{x_1\cdot\lambda_1}
= P_{x_2\cdot\lambda_2}= P_{\lambda_2}$.
This proves parts (b)(ii) and (iv).

The arguments of the previous paragraph show
that $P$ acts transitively on $\Lambda$.
Given $\lambda_1 , \lambda_2 \in \Lambda$ and $x \in P$ such that
$\lambda_2 = x\cdot \lambda_1$,
we can write $x = ul$ with $u \in R_u(P)$ and
$l \in L_{\lambda_1} = C_G(\lambda_1)$.
Then $\lambda_2 = u\cdot \lambda_1$, hence $R_u(P)$ acts transitively
on $\Lambda$.
Now if $u\cdot\lambda_1 = u'\cdot\lambda_1$ for $u, u' \in R_u(P)$, then
$u\inverse u' \in L_{\lambda_1}\cap R_u(P) = \{1\}$,
hence $u=u'$. This proves part (b)(iii).

For the final statement (b)(v), pick some $\lambda \in \Lambda$ and some maximal torus $T$ such that $\lambda \in Y_T(\RR)$.
Then by Corollary \ref{cor:toruscase}(b)(ii), the ray of all positive multiples of $\lambda$ intersects $Y_T(\QQ)$ in a ray.
Any element of $Y_T(\QQ)$ can be scaled by a positive integer to give an element of $Y_T$.
\end{proof}

\begin{defn}
\label{def:optclass}
We call
$\Lambda(\Xi,\Upsilon) \subseteq Y( \RR)$ from Theorem \ref{thm:kempf2.2}(b)
\emph{the class of optimal cocharacters afforded by the pair
of $\RR$-quasi-states $(\Xi,\Upsilon)$}.
Similarly, we call the parabolic subgroup $P(\Xi,\Upsilon)$ of $G$ \emph{the optimal parabolic subgroup
afforded by the pair $(\Xi,\Upsilon)$}.
\end{defn}

\begin{rem}
\label{rem:funct}
Let $\Xi$ and $\Upsilon$ be bounded real quasi-states
as in Theorem \ref{thm:kempf2.2}(b) and let
$P(\Xi,\Upsilon)$ be the optimal parabolic subgroup of $G$
afforded by the pair $(\Xi,\Upsilon)$ from Definition \ref{def:optclass}.
It is clear that the map
$(\Xi,\Upsilon)\mapsto P(\Xi,\Upsilon)$ is functorial in the following sense:
for any $g\in G$,
$g\cdot\Lambda(\Xi,\Upsilon) = \Lambda(g_*\Xi,g_*\Upsilon)$; hence
$gP(\Xi,\Upsilon)g\inverse = P(g_*\Xi,g_*\Upsilon)$. 
In particular, if $g \in C_G(\Xi)\cap C_G(\Upsilon)$,
then $g$ stabilizes the optimal class $\Lambda(\Xi,\Upsilon)$ and normalizes the
parabolic subgroup $P(\Xi,\Upsilon)$; hence $C_G(\Xi)\cap C_G(\Upsilon) \subseteq P(\Xi,\Upsilon)$.
\end{rem}

\section{Quasi-states and $G$-centres}
\label{sec:buildings}

In this section, we translate our results into the language of 
spherical buildings.
The principal result of the paper is
Theorem \ref{thm:partialTCC} which
gives a complete characterization of the existence of a $G$-centre
of a  convex polyhedral
subset of finite type $\Sigma$ in $\Delta(K)$
in terms of the existence of a bounded integral quasi-state which admits
local maxima on $\zeta^\inverse(\Sigma) \cup \{0\}$.

Recall the notation from Section \ref{sub:buildings}.
The key tool is the link between convex subsets of $\Delta(K)$ and
quasi-admissible $K$-quasi-states, which we briefly discuss now.
We first consider the special case of quasi-states which are admissible.

\begin{defn}
\label{def:mu}

Let $\Upsilon$ be a bounded admissible $K$-quasi-state of $G$. 
By Lemma \ref{lem:propertiesofstates}(iii),
for any $\lambda \in Y(K)$ the value of $\mu(\Upsilon,T,\lambda)$ is independent of the choice of
maximal torus $T$ with $\lambda \in Y_T(K)$.
Hence we can define a numerical function $\mu(\Upsilon,\cdot)$ on all of $Y(K)$
without ambiguity by setting
$$
\mu(\Upsilon,\lambda) := \mu(\Upsilon,T,\lambda),
$$
where $T$ is any maximal torus of $G$ such that $\lambda \in Y_T(K)$.
Moreover, since $\mu(\Upsilon,\lambda) = \mu(\Upsilon, u\cdot\lambda)$ for any $u \in R_u(P_\lambda)$,
this function descends to give a $K$-valued function on
$V(K)$, which we also denoted by $\mu(\Upsilon,\cdot)$.
Finally, we can also restrict to get a $K$-valued function on
$\Delta(K)$.

\end{defn}

Now let $\Xi$ be a bounded quasi-admissible $K$-quasi-state of $G$.
If $\Xi$ is not admissible, 
the numerical functions $\mu(\Xi, T, \cdot)$ on $Y(K)$ do not descend to
give a well-defined function on $\Delta(K)$ as in Definition \ref{def:mu},
since the value of $\mu(\Xi,T,\lambda)$ may depend on the choice of $T$ with $\lambda \in Y_T(K)$.
However, we can still form $Z(\Xi)$, which is a saturated convex polyhedral cone
of finite type in $Y(K)$, by Corollary \ref{cor:qstatecone}.
So $\zeta(Z(\Xi)\setminus\{0\})$ is a convex polyhedral set of
finite type in $\Delta(K)$.
This is analogous to considering the Zariski topology on projective varieties:
a homogeneous polynomial in $n+1$ variables does not give a well-defined function on projective $n$-space,
but its vanishing set \emph{is} well-defined.
Likewise, the numerical function of the quasi-admissible quasi-state $\Xi$ does not give a
well-defined function on $V(K)$ or $\Delta(K)$,
but it does make sense to speak of the set of points in $V(K)$ or $\Delta(K)$
where the numerical function is non-negative.

Our next result is one of the main results of the paper.
It provides the promised link between the quasi-state formalism
in Section \ref{sec:quasistates} and Conjecture \ref{conj:tcc},
and shows how to use this link to find centres for convex subsets of $\Delta(K)$.

\begin{thm}
\label{thm:bldgversion}
Let $\Sigma$ be a convex polyhedral set of finite type in
$\Delta(K)$ and let $C = \zeta\inverse(\Sigma) \cup \{0\}$.
Suppose that $\Upsilon$ is a bounded $K$-quasi-state of $G$ such that
$\Upsilon$ has local maxima on $C$ and $\Upsilon$ is admissible at these local maxima.
Then there exists a bounded quasi-admissible $K$-quasi-state $\Xi$ defining $C$ with $N_G(C) = C_G(\Xi)$.
Moreover, for \emph{any} such $K$-quasi-state $\Xi$ we have:
\begin{itemize}
\item[(i)] $\zeta(\Lambda(\Xi,\Upsilon))$ is a singleton set $\{c\}$, where $\Lambda(\Xi,\Upsilon)$
is the class of optimal cocharacters afforded by the pair $(\Xi,\Upsilon)$;
\item[(ii)] $c$ from part (i) is a $C_G(\Upsilon)$-centre of $\Sigma$.
\end{itemize}
\end{thm}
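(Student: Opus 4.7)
The plan is to construct $\Xi$ explicitly from $C = \zeta^{-1}(\Sigma) \cup \{0\}$ and then apply Theorem \ref{thm:kempf2.2}. First I would note that $C$ is a saturated convex polyhedral cone of finite type in $Y(K)$ by Lemma \ref{lem:Ycones}. Existence of a bounded quasi-admissible $K$-quasi-state $\Xi$ defining $C$ with $N_G(C) = C_G(\Xi)$ then follows immediately from Lemma \ref{lem:qstate2}, taking $\Xi = \Xi(C)$.

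The key observation underlying the rest of the argument is that the optimal class $\Lambda(\Xi,\Upsilon)$ depends on $\Xi$ only through the cone $C = Z(\Xi)$. Indeed, the function $\mu(\Upsilon,T,\cdot)/\left\|\cdot\right\|$ depends only on $\Upsilon$, each $M(T,\Xi,\Upsilon)$ is its supremum over $C_T\setminus\{0\}$, and so both the overall maximum $M$ and the set $\Lambda$ of unit-length vectors in $C$ attaining $M$ depend only on the pair $(C,\Upsilon)$. The same holds for the hypothesis that $\Upsilon$ admits local maxima on $C$ and is admissible at them. Granted this, I would apply Theorem \ref{thm:kempf2.2}(b) to any bounded quasi-admissible $\Xi$ defining $C$: the theorem yields a non-empty $\Lambda = \Lambda(\Xi,\Upsilon)$ on which $R_u(P)$ acts simply transitively, where $P = P_\lambda$ is common to all $\lambda \in \Lambda$. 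Any two such $\lambda_1,\lambda_2$ therefore differ by an element of $R_u(P_{\lambda_1})$, so $\varphi(\lambda_1) = \varphi(\lambda_2)$ by the very definition of $V(K)$. Since $\left\|\lambda\right\| = 1$ for each $\lambda \in \Lambda$, the image $\zeta(\Lambda)$ is a single point $c \in \Delta(K)$, proving (i).

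For (ii), let $g \in C_G(\Upsilon) \cap N_G(\Sigma)$. Since $\zeta$ is $G$-equivariant and $g$ stabilizes $\Sigma$, we have $g \in N_G(C)$; by Lemma \ref{lem:qstate1}, $Z(g_*\Xi) = g\cdot Z(\Xi) = C$, so $g_*\Xi$ is another bounded quasi-admissible $K$-quasi-state defining $C$. The functoriality recorded in Remark \ref{rem:funct} gives
\[
g\cdot \Lambda(\Xi,\Upsilon) = \Lambda(g_*\Xi, g_*\Upsilon) = \Lambda(g_*\Xi,\Upsilon),
\]
and by the key observation this last set equals $\Lambda(\Xi,\Upsilon)$. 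Hence $g$ stabilizes $\Lambda$, and $G$-equivariance of $\zeta$ yields $g\cdot c = c$.

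The step requiring the most care is the observation that $\Lambda(\Xi,\Upsilon)$ is insensitive to the particular choice of $\Xi$ defining $C$; this is really just an unpacking of definitions, but it is the essential input that allows a $g \in N_G(\Sigma)$, which only controls $C$ and need not fix any specific $\Xi$, to be leveraged via Remark \ref{rem:funct}. Once this is in hand, the simple transitivity of $R_u(P_\lambda)$ on the fibres of $\varphi$ delivers both (i) and (ii) with no further effort.
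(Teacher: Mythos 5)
Your proof follows the paper's strategy closely --- reduce to a cone via Lemma \ref{lem:Ycones}, produce $\Xi$ via the material culminating in Lemma \ref{lem:qstate2}/Corollary \ref{cor:qstatecone}, apply Theorem \ref{thm:kempf2.2}(b), and deduce (ii) from the functoriality in Remark \ref{rem:funct}. Your ``key observation'' that $\Lambda(\Xi,\Upsilon)$ depends on $\Xi$ only through $Z(\Xi)$ is correct and is a genuine, if small, improvement: it lets you handle $g\in N_G(\Sigma)\cap C_G(\Upsilon)$ by replacing $\Xi$ with $g_*\Xi$ (which defines the same cone by Lemma \ref{lem:qstate1} and remains bounded and quasi-admissible by Lemma \ref{lem:propertiesofstates}(vi)), instead of relying on the hypothesis $C_G(\Xi)=N_G(C)$ as the paper does. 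In effect you prove the ``moreover'' clause for \emph{every} bounded quasi-admissible $\Xi$ defining $C$, not just those with $C_G(\Xi)=N_G(C)$.

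There is, however, a gap in the case $K=\QQ$. Theorem \ref{thm:kempf2.2} is stated for \emph{real} quasi-states: when $K=\QQ$, the cone $Z(\Xi)$ appearing there is the closure in $Y(\RR)$ of $C\subseteq Y(\QQ)$, and the optimal class $\Lambda(\Xi,\Upsilon)$ is a priori a subset of $Y(\RR)$. You write ``the image $\zeta(\Lambda)$ is a single point $c\in\Delta(K)$'', but this only yields $c\in\Delta(\RR)$; to conclude $c\in\Delta(\QQ)$ --- which is required, since $\Sigma\subseteq\Delta(\QQ)$ and a $C_G(\Upsilon)$-centre must by Definition \ref{defn:centre} be a point \emph{of} $\Sigma$ --- you must invoke Theorem \ref{thm:kempf2.2}(b)(v). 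Since $\Xi$ and $\Upsilon$ are $\QQ$-quasi-states, (b)(v) gives a positive multiple of some $\lambda\in\Lambda$ lying in $Y$; that representative lies in $C$ (as $C_T$ consists precisely of the rational points of its closure, being cut out by rational inequalities), so $c=\zeta(\lambda)\in\Sigma$. The paper makes exactly this case distinction at the end of its proof; your argument as written handles only $K=\RR$ correctly.
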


\begin{proof}
The set $C = \zeta\inverse(\Sigma) \cup \{0\}$ is a saturated convex polyhedral cone of finite type in $Y(K)$, thanks to Lemma \ref{lem:Ycones}.
So, by Corollary \ref{cor:qstatecone}, there is a quasi-admissible
bounded $K$-quasi-state $\Xi$ such that $C= Z(\Xi)$, and we can choose
$\Xi$ in such a way that $C_G(\Xi) = N_G(C)$, which proves the first assertion of the theorem.

Now suppose $\Xi$ is any bounded quasi-admissible $K$-quasi-state defining $C$ with $N_G(C) = C_G(\Xi)$.
Since $\Upsilon$ has local maxima on $C$, and $\Upsilon$ is admissible at these local maxima,
the hypotheses of Theorem \ref{thm:kempf2.2}(b) hold,
so we can define the optimal class $\Lambda(\Xi,\Upsilon)$.
If $K = \RR$, then $\zeta(\Lambda(\Xi,\Upsilon))$
is a singleton set $\{c\}$, by Theorem \ref{thm:kempf2.2}(b)(iii), which gives (i).
Now Remark \ref{rem:funct} implies that $c$
is fixed by $C_G(\Xi) \cap C_G(\Upsilon)$.
Since $C_G(\Xi) = N_G(C) = N_G(\Sigma)$,
part (ii) follows.

In the case $K=\QQ$, we have to be a little bit more careful.
We first move into $Y(\RR)$ by looking at the cone $Z(\Xi) \subseteq Y(\RR)$
(this is just the closure in $Y(\RR)$ of the corresponding cone in $Y(\QQ)$).
Now, by Theorem \ref{thm:kempf2.2}(b)(v),
since $\Xi$ and $\Upsilon$ are $\QQ$-quasi-states,
we have $\{c\} = \zeta(\Lambda(\Xi,\Upsilon)) \subseteq \Delta(\QQ)$
so $c \in \Sigma$.
\end{proof}

\begin{cor}
\label{cor:admissiblecase}
Suppose that $\Sigma$ is a convex polyhedral
subset of finite type in $\Delta(K)$, and let $C = \zeta\inverse(\Sigma) \cup \{0\}$.
Suppose there is a bounded admissible $K$-quasi-state $\Upsilon$ of $G$ such that
$\mu(\Upsilon,\cdot)$ attains a finite positive value on $C$.
Then $\Sigma$ has a $C_G(\Upsilon)$-centre.
If further $N_G(\Sigma) \subseteq C_G(\Upsilon)$, then $\Sigma$ has a $G$-centre.
\end{cor}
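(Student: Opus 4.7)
The plan is to deduce this corollary directly from Theorem \ref{thm:bldgversion}, since the admissibility hypothesis on $\Upsilon$ is strictly stronger than what that theorem requires. The main task is simply to verify its hypotheses and then unpack the notion of $C_G(\Upsilon)$-centre.

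First I would check that $\Upsilon$ has local maxima on $C$. Since $\Upsilon$ is globally admissible, Lemma \ref{lem:propertiesofstates}(iii) together with Definition \ref{def:mu} gives us a well-defined function $\mu(\Upsilon,\cdot)$ on $Y(K)$ (and on $V(K)$ and $\Delta(K)$), and this is precisely the function appearing in the hypothesis. The assumption that $\mu(\Upsilon,\cdot)$ attains a finite positive value on $C$ means there exists $\lambda_0 \in C$ and a maximal torus $T_0$ of $G$ with $\lambda_0 \in Y_{T_0}(K)$ and $0 < \mu(\Upsilon,T_0,\lambda_0) < \infty$. By the remark immediately following Definition \ref{def:localmaxima}, this is exactly the condition that $\Upsilon$ has a local maximum on $C$. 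Moreover, since $\Upsilon$ is admissible on all of $Y(\RR)$, it is in particular admissible at every local maximum in $C$.

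Next, since $\Sigma$ is convex, polyhedral, and of finite type, and since $\Upsilon$ satisfies the hypotheses of Theorem \ref{thm:bldgversion}, that theorem applies. It delivers a bounded quasi-admissible $K$-quasi-state $\Xi$ with $C = Z(\Xi)$ and $C_G(\Xi) = N_G(C)$, together with a point $c \in \Sigma$ (namely $\zeta(\Lambda(\Xi,\Upsilon)) = \{c\}$) which is a $C_G(\Upsilon)$-centre of $\Sigma$; that is, $c$ is fixed by $N_{C_G(\Upsilon)}(\Sigma) = N_G(\Sigma) \cap C_G(\Upsilon)$. This proves the first assertion.

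For the second assertion, suppose additionally that $N_G(\Sigma) \subseteq C_G(\Upsilon)$. Then $N_G(\Sigma) \cap C_G(\Upsilon) = N_G(\Sigma)$, so the point $c$ produced above is fixed by the whole of $N_G(\Sigma)$. By Definition \ref{defn:centre}, this says precisely that $c$ is a $G$-centre of $\Sigma$. There is no real obstacle here, since all the technical work has already been carried out in Theorem \ref{thm:bldgversion}; the only subtlety worth flagging is ensuring that ``attains a finite positive value'' matches the definition of a local maximum, which is secured by the remark following Definition \ref{def:localmaxima}.
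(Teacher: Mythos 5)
Your proof is correct and follows the same route as the paper: verify the hypotheses of Theorem \ref{thm:bldgversion} (finite positive value of $\mu(\Upsilon,\cdot)$ on $C$ gives a local maximum via the remark after Definition \ref{def:localmaxima}; global admissibility trivially gives admissibility at local maxima), apply that theorem, and then unwind Definition \ref{defn:centre} for the second assertion. The only difference is that you spell out the details more explicitly than the paper's terse three-sentence argument.
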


\begin{proof}
Since $\Upsilon$ attains a finite positive value on $C$,
it has local maxima on $C$.
Since $\Upsilon$ is admissible, it is certainly admissible at local maxima in $C$,
so we can apply Theorem \ref{thm:bldgversion}.
The second assertion follows immediately.
\end{proof}

\begin{rem}
\label{rem:oppos}
Let $\Sigma$ be a convex polyhedral
subset of finite type in $\Delta(K)$.
Note that in Theorem \ref{thm:bldgversion}, Theorem \ref{thm:partialTCC} and
Corollary \ref{cor:admissiblecase}, we do not assume that $\Sigma$ is not $\Delta(K)$-cr,
and yet we still find a centre.
However, the assumptions on the existence of $\Upsilon$
do restrict the possibilities for $\Sigma$ in practice.

For example, in Corollary \ref{cor:admissiblecase}, we have that
$\mu(\Upsilon,\lambda) >0$ for some $\lambda \in C = \zeta\inverse(\Sigma)\cup \{0\}$.
This implies that $\lambda \in Z(\Upsilon) \cap C$, but
$-\lambda \not\in Z(\Upsilon) \cap C$ (cf.\ Remark \ref{rem:needtwostates}).
Thus the image of $Z(\Upsilon)\setminus\{0\} \cap C$ in $\Delta(K)$, which is
$\zeta(Z(\Upsilon)\setminus\{0\})\cap \Sigma$, is a subset of $\Delta(K)$ which is not $\Delta(K)$-cr,
and our centre actually lies in this set.
\end{rem}

Our final theorem of this section is the central result of the paper.
It shows that not only do our methods involving quasi-states
suffice to guarantee the existence of a $G$-centre
of a convex polyhedral subset of $\Delta(K)$,
but that the existence of a suitable
quasi-state is actually necessary.

\begin{thm}
\label{thm:partialTCC}
Let $\Sigma$ be a convex polyhedral
subset of finite type in $\Delta(K)$, and let $C = \zeta\inverse(\Sigma) \cup \{0\}$.
Then $\Sigma$ has a $G$-centre if and only if there is a bounded integral quasi-state
$\Upsilon$ such that $\Upsilon$ has local maxima on $C$, $\Upsilon$ is admissible at these local maxima,
and $N_G(\Sigma) \subseteq C_G(\Upsilon)$.
\end{thm}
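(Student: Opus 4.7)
The statement is an ``if and only if''; I would treat the forward direction as an immediate consequence of earlier results, and the converse via an explicit construction. For the forward direction, given $\Upsilon$ satisfying the hypotheses, I would invoke Corollary \ref{cor:qstatecone} to produce a bounded quasi-admissible $K$-quasi-state $\Xi$ with $Z(\Xi)=C$ and $C_G(\Xi)=N_G(\Sigma)$; Theorem \ref{thm:bldgversion} then yields a point $c\in\Sigma$ that is a $C_G(\Upsilon)$-centre, and the assumption $N_G(\Sigma)\subseteq C_G(\Upsilon)$ upgrades $c$ to a $G$-centre.

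For the converse, suppose $c\in\Sigma$ is a $G$-centre. My plan is to construct $\Upsilon$ explicitly as the $P_{\lambda_0}$-symmetrization of a ``seed'' quasi-state supported on a single maximal torus $T_0$. First I would choose an integer cocharacter $\lambda_0\in Y$ with $\zeta(\lambda_0)=c$, fix a maximal torus $T_0$ with $\lambda_0\in Y_{T_0}$, and use the integer-valued bilinear form on $Y_{T_0}$ afforded by Definition \ref{def:norm}(b) to produce an integer character $\alpha_0\in X_{T_0}$ representing the functional $\nu\mapsto(\nu,\lambda_0)$. Setting $P:=P_{\lambda_0}$ (which equals the stabilizer of $c$ in $G$, so that $N_G(\Sigma)\subseteq P$), I would define a seed integral quasi-state $\Upsilon_0$ by $\Upsilon_0(T_0):=\{\alpha_0\}$ and $\Upsilon_0(T):=\varnothing$ for $T\ne T_0$, and then set
\[
\Upsilon:=\bigcup_{h\in P} h_*\Upsilon_0.
\]

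The bulk of the work is then to verify the four required properties of $\Upsilon$. The key structural observation, which I would establish via the standard identity $N_G(T_0)\cap P=N_{L_{\lambda_0}}(T_0)$ together with the fact that $L_{\lambda_0}$ centralizes $\lambda_0$, is that $\Upsilon(T)$ is a singleton $\{\alpha_T\}$ for each maximal torus $T\subseteq P$ and is empty otherwise, where $\alpha_T$ is the integer character dual (via the inner product) to $\lambda_T:=h\cdot\lambda_0$ for \emph{any} $h\in P$ with $hT_0h\inverse=T$. Integrality would then be immediate; boundedness would follow because $\bigcup_{g\in G}(g_*\Upsilon)(T)$ is contained in $\{g_!\alpha_0:g\in G,\,gT_0g\inverse=T\}$, a set with at most $|W|$ elements; and $N_G(\Sigma)\subseteq P\subseteq C_G(\Upsilon)$ would be manifest from the $P$-invariance of the construction. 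To identify the local maxima I would apply Cauchy--Schwarz to $\mu(\Upsilon,T,\nu)=(\nu,\lambda_T)$ on each apartment $Y_T(K)$ with $T\subseteq P$: the maximum value $M(T)=\|\lambda_0\|$ is attained uniquely on the positive ray through $\lambda_T$, and this ray lies in $C_T$ since $\zeta(\lambda_T)=c\in\Sigma$. For $T\not\subseteq P$, $\Upsilon(T)=\varnothing$ forces $M(T)=\infty$ and so contributes no local maxima. Finally, admissibility at each local maximum reduces to the $G$-invariance of the form combined with the identity $x\cdot\lambda_T=\lambda_{xTx\inverse}$ for $x\in P$.

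The hardest part will be the well-definedness of $\Upsilon(T)$ as a singleton for $T\subseteq P$, which turns on the identification $N_G(T_0)\cap P=N_{L_{\lambda_0}}(T_0)$ combined with the $L_{\lambda_0}$-invariance of $\lambda_0$. A secondary subtlety appears in the case $K=\RR$ when the given $G$-centre $c$ is irrational: one cannot lift $c$ directly to an integer cocharacter, and additional work exploiting the polyhedral structure of $\Sigma$ is needed to reduce to a rational $G$-centre before running the construction.
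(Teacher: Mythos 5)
Your proof takes a genuinely different route from the paper's. Both constructions share the skeleton: use a cocharacter representing the centre $c$ to define a ``seed'' quasi-state at a fixed torus $T_0$, symmetrize over $P=P_\lambda$ (which automatically captures $N_G(\Sigma)$), and then verify the hypotheses of Theorem \ref{thm:bldgversion}/Corollary \ref{cor:admissiblecase}. The difference is the seed. The paper sets $\Upsilon(T_0)=\Psi(R_u(P),T_0)$, a set of \emph{roots}; you set $\Upsilon_0(T_0)=\{\alpha_0\}$ where $\alpha_0$ is the character dual (via the fixed inner product) to the cocharacter $\lambda_0$. The paper's choice depends only on the parabolic $P$, is automatically integral, and is well-defined because $N_P(T_0)$ permutes $\Psi(R_u(P),T_0)$. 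Your choice depends on $\lambda_0$ itself, gives a singleton $\Upsilon(T)$ and a particularly clean Cauchy--Schwarz picture of the local maxima, and its well-definedness requires the identification $N_P(T_0)\subseteq L_{\lambda_0}=C_G(\lambda_0)$, which is correct: any $h\in N_P(T_0)$ must in fact lie in $L_{\lambda_0}$ (the Levi of $P$ containing $T_0$ is unique, and $R_u(P)$ acts simply transitively on Levis). For $K=\QQ$ this all goes through: $c$ is rational, so $\lambda_0$ can be chosen in $Y$, and the rest of your outline (integrality, boundedness via the finitely many $W$-translates of $\alpha_0$, local maxima via Cauchy--Schwarz, admissibility via $P_{\lambda_T}=P$ and the identity $x\cdot\lambda_T=\lambda_{xTx^{-1}}$ for $x\in P$) is sound.

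However, for $K=\RR$ there is a genuine gap that I do not think your proposed patch closes. You flag that if $c$ is irrational you cannot choose $\lambda_0\in Y$, and suggest ``reducing to a rational $G$-centre.'' But for $K=\RR$ a polyhedral convex subset of finite type is permitted to be genuinely irrational: the cones $C_T$ are defined by finite subsets of $X_T(\RR)$, not $X_T(\QQ)$, and the $N_G(\Sigma)$-fixed locus inside $\Sigma$ (which contains $c$) need not contain any rational point. Nor does the weaker fix of choosing an integral $\lambda_0$ with merely $P_{\lambda_0}=P$ (dropping $\zeta(\lambda_0)=c$) work: then the local maximum of $\mu(\Upsilon,T_0,\cdot)/\|\cdot\|$ on $C_{T_0}$ need not lie on the ray through $\lambda_0$, so you lose control of $P_\nu$ at the local maximum $\nu$ and the admissibility argument collapses, since you only know $P\subseteq C_G(\Upsilon)$, not that $C_G(\Upsilon)$ contains $P_\nu$. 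The paper's choice of $\Upsilon$ is precisely engineered to sidestep this: taking $\Upsilon(T_0)=\Psi(R_u(P),T_0)$ is integral for free, and to establish that local maxima exist one only needs the inequality $0<\mu(\Upsilon,T_0,\lambda)<\infty$ for the given (possibly irrational) $\lambda$ --- one never needs the local maximum itself to be a multiple of $\lambda$. If you want to keep your construction, I would recommend either restricting your claim to $K=\QQ$ or replacing the seed by the paper's root-set construction when $K=\RR$.
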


\begin{proof}
Suppose $\Sigma$ has a $G$-centre $c$.
Let $\lambda \in Y(K)$ be such that $\zeta(\lambda) = c$.
Fix a maximal torus $T_0$ of $G$ such that $\lambda \in Y_{T_0}(K)$,
and let $P = P_\lambda$ be the (proper) parabolic subgroup of $G$ attached to $\lambda$.
We construct $\Upsilon$ with the desired properties directly;
the construction is similar to that employed in
Lemmas \ref{lem:qstate2} and \ref{lem:cxcvx}.

First, let $\Psi = \Psi(G,T_0)$ be the root system of $G$ with respect to $T_0$, and define
$$
\Upsilon(T_0):= \{\alpha \in \Psi \mid U_\alpha \subseteq R_u(P)\} = \Psi(R_u(P),T_0).
$$
Now, for any other maximal torus $T$ of $G$ such that $T \subset P$,
choose $g \in P$ such that $gT_0g\inverse = T$, and set $\Upsilon(T) = g_!\Upsilon(T_0)$.
Finally, for any maximal torus $T$ of $G$ which is not contained in $P$,
set $\Upsilon(T) = \varnothing$.

We first claim that $\Upsilon$ is well-defined.
This amounts to showing that the construction of $\Upsilon(T)$
for $T \subset P$ is independent of the choice of $g \in P$ with $gT_0g\inverse = T$.
To see this, suppose that $h \in P$ is such that $hT_0h\inverse = T$.
Then $h\inverse g \in N_P(T_0)$, and $N_P(T_0)$ stabilizes the set of roots $\Psi(R_u(P),T_0) = \Upsilon(T_0)$,
so we have $g_!\Upsilon(T_0) = h_!\Upsilon(T_0)$, as required.

Now, it is clear that $\Upsilon$ is an integral quasi-state.
The fact that $\Upsilon$ is bounded follows from
arguments similar to those in the proof of Lemma \ref{lem:qstate2}.

To show that $\Upsilon$ is admissible at local maxima, we first look at the stabilizer of $\Upsilon$.
Let $T$ be any maximal torus of $P$, and find $g \in P$ such that $gT_0g\inverse = T$;
then by construction $\Upsilon(T) = g_!\Upsilon(T_0)$.
Now for any $x \in P$, we have $x\inverse g \in P$ and
$(x\inverse g)T_0(x\inverse g)\inverse = x\inverse T x$,
so $\Upsilon(x\inverse Tx) = (x\inverse g)_!\Upsilon(T_0)$.
Thus we have
$$
(x_*\Upsilon)(T) = x_!\Upsilon(x\inverse Tx) = x_!((x\inverse g)_!\Upsilon(T_0))
= g_!\Upsilon(T_0)= \Upsilon(T),
$$
which shows that $(x_*\Upsilon)(T) = \Upsilon(T)$ for all $x \in P$.
On the other hand, if $T$ is a maximal torus of $G$ not contained in $P$,
then $\Upsilon(T) = \varnothing$ and $\Upsilon(x\inverse Tx) = \varnothing$ for all $x \in P$,
so we have $(x_*\Upsilon)(T) = \Upsilon(T)$ in this case as well.
This shows that $P \subseteq C_G(\Upsilon)$.
Now suppose $x \in C_G(\Upsilon)$.
Then $\Upsilon(T_0) = (x_*\Upsilon)(T_0) = x_!\Upsilon(x\inverse T_0x)$.
This implies that $\Upsilon(x\inverse T_0x)$ is non-empty, so $x\inverse T_0x \subset P$.
Find $g \in P$ such that $gT_0g\inverse = x\inverse T_0 x$; then $xg \in N_G(T_0)$ and
$$
\Upsilon(T_0) = x_!\Upsilon(x\inverse T_0x) = x_! \Upsilon(gT_0g^{-1}) = x_!g_!\Upsilon(T_0) = (xg)_!\Upsilon(T_0).
$$
Consequently,
$xg$ is in the subgroup of $N_G(T_0)$ consisting of the elements that stabilize
$\Upsilon(T_0) = \Psi(R_u(P),T_0)$.
But $R_u(P)$ is generated by the root groups $U_\alpha$ with $\alpha \in \Psi(R_u(P),T_0)$,
so $xg \in N_G(R_u(P)) = P$.
Since $g \in P$, we have $x \in P$, and
thus $C_G(\Upsilon) \subseteq P$.
Combining these inclusions, we get $C_G(\Upsilon) = P$.

Now suppose $\nu \in Y(K)$ is such that $0 < \mu(\Upsilon,T,\nu) < \infty$ for
some maximal torus $T$ of $G$ with $\nu \in Y_T(K)$.
Then $T \subset P$, because $\mu(\Upsilon,T,\nu)$ has a finite value,
so there exists $g \in P$ such that $gT_0g\inverse = T$
and $\Upsilon(T) = g_!\Upsilon(T_0)$.
Now $\mu(\Upsilon,T,\nu) > 0$ implies that $\langle \nu, \alpha \rangle >0$
for all $\alpha \in \Upsilon(T)$, which implies that
$\langle \nu, g_!\beta \rangle = \langle g\inverse\cdot\nu,\beta \rangle >0$
for all $\beta \in \Upsilon(T_0) = \Psi(R_u(P), T_0)$.
So we have $R_u(P) \subseteq R_u(P_{g\inverse\cdot\nu})$, so $P_{g\inverse\cdot\nu} \subseteq P$.
But $g\in P$, so we conclude that $P_\nu \subseteq P$.
Therefore, for any $x \in P_\nu$, we have $x \in P$, so $(x_*\Upsilon)(T) = \Upsilon(T)$
by the previous paragraph.
Thus
$$
\mu(\Upsilon,xTx\inverse,x\cdot\nu) = \mu(x_*^{-1}\Upsilon,T,\nu) = \mu(\Upsilon,T,\nu)
$$
for all $x \in P_\nu$, by Lemma \ref{lem:propertiesofstates}(ii).
This shows that $\Upsilon$ is admissible at all points where its numerical function
takes a finite positive value.

Let $C = \zeta\inverse(\Sigma) \cup \{0\}$.
By construction, $0<\mu(\Upsilon,T_0,\lambda) <\infty$, so $\Upsilon$ has local maxima on $C$,
and by the previous paragraph $\Upsilon$ is admissible at these local maxima.
Moreover, since $N_G(\Sigma)$ fixes $c$, and the function $\zeta:Y(K)\setminus\{0\} \to \Delta(K)$ is
$G$-equivariant, we must have that $N_G(\Sigma)$ normalizes $P = P_\lambda$,
and hence $N_G(\Sigma) \subseteq P = C_G(\Upsilon)$.
This proves the forward implication of the result.

The other direction follows immediately from Corollary \ref{cor:admissiblecase}.\end{proof}

\begin{rems}
\label{rems:howtoprovetcc}
(i). Note that Theorem \ref{thm:partialTCC} says that proving Conjecture \ref{conj:tcc}
(or at least finding a $G$-centre) for a convex polyhedral subset $\Sigma$ of
finite type in $\Delta(K)$ is equivalent to finding a suitable
quasi-state $\Upsilon$ whose numerical function is sufficiently well-behaved on
$\zeta\inverse(\Sigma)$.
It also says that, in theory at least, it is enough to look at integral quasi-states.
Moreover, given $\Upsilon$, we can construct a centre explicitly --- it is the image under $\zeta$ of
the optimal class of cocharacters $\Lambda(\Xi,\Upsilon)$ in the building $\Delta(K)$.
In Section \ref{sec:gitstcc} below, we show how to find such a quasi-state $\Upsilon$ in some specific cases
arising from GIT.

(ii). The quasi-state $\Upsilon$ in the proof of Theorem \ref{thm:partialTCC}
is admissible at local maxima of $C$ in our sense,
but is not necessarily admissible on all of $Y(K)$.
This result shows why it is important to weaken Kempf's original notions in
Definition \ref{defn:numer}, cf. Remark \ref{rem:kempforus}.
Despite this difficulty,
in our applications in Section \ref{sec:gitstcc} below, we are usually
able to find quasi-states $\Upsilon$ which \emph{are} admissible. 

(iii). 
The $G$-centre provided
by the quasi-state $\Upsilon$ may not be the same as the original $G$-centre
given in the statement of Theorem \ref{thm:partialTCC}.
For a simple example of this, consider a proper parabolic subgroup $P$ of $G$,
and let $\Sigma = \Delta(K)^P$ be the subcomplex consisting of the simplices
in $\Delta(K)$ that are contained in $\sigma_P$.
Then it is easy to see that $N_G(\Sigma) = P$.
Now, given any $\lambda \in Y(K)$ such that $P_\lambda = P$,
we have that $\zeta(\lambda)$
is fixed by $N_G(\Sigma)$; hence $\zeta(\lambda)$
is a $G$-centre of $\Sigma$, and $\Sigma$ has infinitely many $G$-centres
in general.
However, the quasi-state $\Upsilon$ constructed in the proof of Theorem \ref{thm:partialTCC}
depends only on $P$, and so picks out just one of these $G$-centres, whatever our initial choice of $\lambda$ was.
\end{rems}

\section{GIT and the Centre Conjecture}
\label{sec:gitstcc}


We now recall how Kempf's results
on GIT and optimal parabolic subgroups follow from his result
\cite[Thm.~2.2]{kempf} on states, and we recast his proof
in the language of buildings and centres.
We use Theorem \ref{thm:kempf2.2}
--- our extension of \cite[Thm.~2.2]{kempf} ---
to strengthen Kempf's results.
This allows us to deal with a special case of the
Centre Conjecture in which the subset $\Sigma$ of $\Delta(K)$
comes from a set of destabilizing cocharacters for some $G$-action.
We then illustrate these ideas by proving some further
cases of the Centre Conjecture (Theorems \ref{thm:apt} and \ref{thm:tcc});
these last two results provide applications of the GIT-methods in this
paper to situations which have no apparent connection with GIT.
This is rather striking, and supports our view that these methods
provide valuable insight
into Conjecture \ref{conj:tcc}.


Recall the notation and terminology set up in Section \ref{sec:instab}.
In particular, fix an affine $G$-variety $A$, a subset $U$ of $A$ and a closed $G$-stable
subvariety $S$ of $A$.
We introduce one further piece of notation to help elucidate the connection between
the approach of \cite{kempf}, \cite{He}, \cite{GIT} and our building-theoretic approach.

\begin{defn}\label{defn:D}
Let $A$, $U$ and $S$ be as above.
We define
$$
D_{A,U}(\QQ) := \{a\lambda \mid a\in \QQ_{\geq0}, \lambda \in |A,U|\} \subseteq Y(\QQ),
$$
and we define $D_{A,U}(\RR)$ by letting $D_{A,U}(\RR)_T$ be the closure of $D_{A,U}(\QQ)_T$ in $Y_T(\RR)$.
In both cases, we let
$E_{A,U}(K) := \zeta(D_{A,U}(K)\setminus\{0\}) \subseteq \Delta(K)$.
\end{defn}

We now show that $D_{A,U}(K)$ is a convex polyhedral cone in $Y(K)$,
by associating a bounded admissible quasi-state to $|A,U|$.
The ideas in this lemma and the next follow closely those in \cite[Sec.\ 3]{kempf}
and \cite[Sec.\ 4]{GIT}, but we reproduce many of the details for the
convenience of the reader.

\begin{lem}\label{lem:ThetaAU}
There exists a bounded admissible integral quasi-state $\Theta = \Theta_{A,U}$
such that $Z(\Theta) = D_{A,U}(K)$.
In particular, $D_{A,U}(K)$ is a convex polyhedral cone of finite type in $Y(K)$.
Moreover, $D_{A,U}(K)\cap Y = |A,U|$ and $N_G(D_{A,U}(K)) = N_G(|A,U|) \subseteq N_G(|A,U|_S)$.
\end{lem}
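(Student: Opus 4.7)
My plan is to construct $\Theta_{A,U}$ explicitly from weight data of an ambient representation, verify its abstract properties, and then apply Corollary~\ref{cor:qstatecone} to get everything else. I would fix a $G$-equivariant closed embedding $A \hookrightarrow V$ into a finite-dimensional rational $G$-module $V$. For each maximal torus $T$, decomposing any $x = \sum_\alpha x_\alpha$ into $T$-weight components, define
\[
\Theta(T) := \bigcup_{x \in U} \{\,\alpha \in X_T : x_\alpha \neq 0\,\}.
\]
Since $\Theta(T)$ is a subset of the finite set of $T$-weights of $V$, $\Theta$ is an integral quasi-state; the same observation applied to $(g_*\Theta)(T)$ yields boundedness.

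Next I would verify admissibility via the intrinsic reformulation
\[
\mu(\Theta, T, \lambda) = \min_{x \in U}\min\{\,n \in \ZZ : x_n^\lambda \neq 0\,\},
\]
where $V = \bigoplus_n V_\lambda(n)$ is the $\lambda$-weight decomposition and $x = \sum_n x_n^\lambda$ is the induced decomposition; the right-hand side is manifestly independent of $T$. For $g \in P_\lambda$, a direct computation tracking $T$-weights under $g$-conjugation gives $\mu(\Theta, gTg^{-1}, g\cdot\lambda) = \min_{x \in U}\min\{n : (g^{-1}x)_n^\lambda \neq 0\}$, and since $P_\lambda$ preserves the filtration $V_{\geq n} := \bigoplus_{m \geq n} V_\lambda(m)$, the minimum $\lambda$-weight of $g^{-1}x$ equals that of $x$, proving admissibility. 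For $\lambda \in Y_T$, $\lim_{a \to 0}\lambda(a)\cdot x$ exists for every $x \in U$ iff $\langle\lambda,\alpha\rangle\geq 0$ for all $\alpha\in\Theta(T)$ iff $\mu(\Theta, T, \lambda)\geq 0$, giving $Z(\Theta) \cap Y = |A,U|$; scaling by $\QQ_{\geq 0}$ (and, in the real case, taking closures of rational polyhedral cones) yields $Z(\Theta) = D_{A,U}(K)$, and Corollary~\ref{cor:qstatecone} then delivers the polyhedrality and finite-type assertions. Since $|A,U|$ generates $D_{A,U}(\QQ)$ as a cone and $D_{A,U}(\RR)$ as its closure, the identities $D_{A,U}(K)\cap Y = |A,U|$ and $N_G(D_{A,U}(K)) = N_G(|A,U|)$ are immediate.

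The final inclusion $N_G(|A,U|) \subseteq N_G(|A,U|_S)$ is the main obstacle. Given $g\in N_G(|A,U|)$ and $\lambda\in |A,U|_S$, one has $g\cdot\lambda \in |A,U|$ and, for $x \in U$, the identity
\[
\lim_{a\to 0}(g\cdot\lambda)(a)\cdot x \;=\; g\cdot\lim_{a\to 0}\lambda(a)\cdot(g^{-1}x)
\]
combined with $G$-stability of $S$ reduces the required membership to $\lim_{a\to 0}\lambda(a)\cdot(g^{-1}x) \in S$. Since $g^{-1}x$ need not lie in $U$, the hypothesis $\lambda \in |A,U|_S$ does not apply directly; I would close this gap using the equality $|A,gU| = |A,U|$ (equivalent to $g \in N_G(|A,U|)$) together with the weight-theoretic description of $\mu(\Theta,T,\cdot)$ above, tracking the $\lambda$-fixed weight components $(g^{-1}x)_0^\lambda$ in $V$ to force $\lim_{a\to 0}\lambda(a)\cdot(g^{-1}U) \subseteq S$. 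This step is the core technical difficulty of the proof.
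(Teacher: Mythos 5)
Your construction of $\Theta = \bigcup_{x\in U}\Theta_{V,x}$ from a $G$-module embedding $A\hookrightarrow V$, and your checks of boundedness, integrality, admissibility, and the identity $Z(\Theta)=D_{A,U}(K)$, match the paper's proof. The only real difference is the admissibility verification: the paper cites Kempf's Lemma~3.2 for each $\Theta_{V,x}$ and then passes to the union via Lemma~\ref{lem:unionofstates}(iii), whereas you reprove Kempf's lemma directly using the $P_\lambda$-stable filtration $V_{\geq n}=\bigoplus_{m\geq n}V_\lambda(m)$; both are correct. The reduction to $D_{A,U}(\QQ)$, then to $D_{A,U}(\RR)$ by closure, and the appeal to Corollary~\ref{cor:qstatecone} are exactly the paper's route.

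The sticking point is, as you identify, the inclusion $N_G(|A,U|)\subseteq N_G(|A,U|_S)$, and your proposal does not prove it. You have correctly reduced to showing that if $\lambda$ sends $U$ into $S$ and $g\in N_G(|A,U|)$, then $\lambda$ also sends $g^{-1}U$ into $S$; but the hypothesis $g\in N_G(|A,U|)$ only yields that $\lambda$ \emph{destabilizes} $g^{-1}U$, i.e.\ it controls the nonnegative-weight part of $g^{-1}U$ in $V$. The quasi-state $\Theta_{A,U}$ encodes nothing about $S$, so no amount of ``tracking weight components'' of $\Theta$ can decide whether the limit points lie in $S$; that information resides in the auxiliary quasi-state $\Theta_{W,f(U)}$ built from a morphism $f\colon A\to W$ with $f^{-1}(0)=S$, which only enters in Proposition~\ref{prop:kempfstate}. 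Be aware that the paper's own treatment of this step is the single sentence ``The final statement \dots\ is now immediate,'' with no argument supplied. What is genuinely immediate from the $G$-stability of $S$ is the weaker inclusion $N_G(U)\subseteq N_G(|A,U|_S)$: for $g\in N_G(U)$ one has $g^{-1}x\in U$ and the hypothesis applies directly. The lemma asserts the inclusion for the a priori larger group $N_G(|A,U|)$, and this is the step you should scrutinize as a substantive claim rather than try to extract from the weight data of $\Theta_{A,U}$ alone.
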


\begin{proof}
We begin by setting up some notation, following
ideas in \cite{He} and \cite[Sec.\ 4]{GIT}.\
By \cite[Lem.\ 1.1(a)]{kempf}, we can embed $A$ $G$-equivariantly
into a finite-dimensional rational $G$-module $V$.
Now for each $x \in U$ we define an integral quasi-state $\Theta_{V,x}$ as follows:
for each maximal torus $T$ of $G$, let $\Theta_{V,x}(T)$ be the set of
weights $\chi$ of $T$ on $V$ such that the projection of $x$ on the
weight space $V_\chi$ is non-zero (cf. \cite[Lem.\ 3.2]{kempf}).
It is standard that for $\lambda \in Y_T$, $\lim_{a\to 0}\lambda(a)\cdot x$
exists if and only if $\langle \lambda,\chi \rangle \geq 0$ for all
$\chi \in \Theta_{V,x}(T)$.
By \cite[Lem.\ 3.2]{kempf}, each $\Theta_{V,x}$ is a
bounded admissible integral quasi-state. 
Now we define
\begin{equation}
\label{eqn:theta}
\Theta := \Theta_{A,U} := \bigcup_{x\in U} \Theta_{V,x}.
\end{equation}
Since for each maximal torus $T$ of $G$, the set of all weights
of $T$ on $V$ is finite, Lemma \ref{lem:unionofstates}(ii) implies that $\Theta$ is still a bounded quasi-state.
Moreover, since each $\Theta_{V,x}$ is admissible, so is $\Theta$, by Lemma \ref{lem:unionofstates}(iii).
Now, for any $\lambda \in Y$,
we have $\mu(\Theta,\lambda) \geq 0$
if and only if $\mu(\Theta_{V,x},\lambda) \geq 0$ for all $x\in U$
if and only if $\lim_{a\to 0} \lambda(a)\cdot x$ exists for all $x\in U$.
This shows that $Z(\Theta) \cap Y = |A,U|$.
Note also that if $\lambda \in Y$ and $q\lambda \in |A,U|$ for some $q\in \QQ$, then $\lambda \in |A,U|$,
which shows that $D_{A,U}(\QQ) \cap Y = |A,U|$.
It follows straight away that we also have $D_{A,U}(\RR) \cap Y = |A,U|$.

To see that $D_{A,U}(K) = Z(\Theta)$, first suppose that $K=\QQ$.
If $\lambda\in Y(\QQ)$, then $n\lambda\in Y$ for some $n\in \NN$,
and we have $\lambda\in Z(\Theta)$ (resp.\ $\lambda\in D_{A,U}(\QQ)$) if and only if $n\lambda\in Z(\Theta)$
(resp.\ $n\lambda\in D_{A,U}(\QQ)$).
But $D_{A,U}(\QQ)\cap Y= |A,U| = Z(\Theta) \cap Y$, so we are done in case $K = \QQ$.
The result for $K=\RR$ now follows from the definition of $D_{A,U}(\RR)_T$ as the closure of $D_{A,U}(\QQ)_T$
for each maximal torus $T$ of $G$, and from the basic properties of cones laid out in Section \ref{sec:cvxcones}.
The final statement that $N_G(D_{A,U}(K)) = N_G(|A,U|) \subseteq N_G(|A,U|_S)$ is now immediate.
\end{proof}

\begin{rem}\label{rem:movethetamoveU}
Let $\Theta_{A,U}$ be as in Lemma \ref{lem:ThetaAU}, and suppose $g \in G$.
Then, for any maximal torus $T$ of $G$,
$\chi \in \Theta_{A,U}(T)$ if and only if $g_!\chi \in \Theta_{A,g\cdot U}(gTg\inverse)$
if and only if $\chi \in (g_*^{-1}\Theta_{A,g\cdot U})(T)$.
This shows that $g_*\Theta_{A,U} = \Theta_{A,g\cdot U}$.
In particular, $N_G(U) \subseteq C_G(\Theta_{A,U})$.
\end{rem}

We now show how one can use the quasi-state in Lemma \ref{lem:ThetaAU}
to prove another special case of Conjecture \ref{conj:tcc}.
As we remark below, there are other ways to approach Theorem \ref{thm:apt}, but our proof serves as a first
illustration of how methods from GIT can be applied to situations which apparently do not relate to this set-up.

\begin{thm}
\label{thm:apt}
Suppose $\Sigma$ is a convex polyhedral
subset of finite type in $\Delta(\QQ)$ which
is contained within a single apartment of $\Delta(\QQ)$.
If $\Sigma$ is not $\Delta(\QQ)$-completely reducible,
then $\Sigma$ has a $G$-centre.
\end{thm}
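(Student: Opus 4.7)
The plan is to produce a $G$-centre of $\Sigma$ directly by averaging extremal directions of $\Sigma$ inside its single apartment, and then recover the corresponding bounded admissible integral quasi-state via the forward direction of Theorem \ref{thm:partialTCC}. Let $T$ be a maximal torus of $G$ with $\Sigma\subseteq\Delta_T(\QQ)$, set $C:=\zeta^{-1}(\Sigma)\cup\{0\}$, and $C_T:=C\cap Y_T(\QQ)$. By Lemma \ref{lem:Ycones}, $C$ is a saturated convex polyhedral cone of finite type in $Y(\QQ)$ and $C_T$ is a convex polyhedral cone in $Y_T(\QQ)$. Because opposites within a single apartment correspond to negation, the failure of $\Delta(\QQ)$-complete reducibility provides some $\lambda_*\in C_T$ with $-\lambda_*\notin C_T$; convex separation for closed polyhedral cones then yields $\alpha^*\in X_T$ (integral after scaling) with $\alpha^*\geq 0$ on $C_T$ and $\alpha^*(\lambda_*)>0$.

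Polyhedrality gives that $\Sigma$ has finitely many extremal points $v_1,\ldots,v_k\in\Delta_T(\QQ)$ (in the pointed case; a nontrivial lineality space $L:=C_T\cap(-C_T)$ is handled by lifting extremal rays of the quotient $C_T/L$ to their canonical $L^\perp$-representatives, using the inner product furnished by the $G$-invariant norm). Form the average $\bar v:=\tfrac{1}{k}\sum_{i=1}^k v_i\in V_T(\QQ)$; this is well-defined in $V(\QQ)$ by the apartment-independence of linear combinations noted in Section \ref{sub:buildings}. Writing $\zeta(\lambda_*)$ as a spherical convex combination of the $v_i$ and evaluating $\alpha^*$ yields $\alpha^*(v_{i_0})>0$ for some $i_0$; together with $\alpha^*(v_i)\geq 0$ for all $i$, this forces $\alpha^*(\bar v)>0$ and so $\bar v\neq 0$. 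Setting $c:=\zeta(\bar v)\in\Delta_T(\QQ)$, the spherical convexity of $\Sigma$ places $c$ in $\Sigma$.

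Each $g\in N_G(\Sigma)$ acts on $\Delta(\QQ)$ as a building isometry preserving $\Sigma$, and hence permutes the intrinsically defined set $\{v_1,\ldots,v_k\}$. The $G$-equivariance of shared-apartment linear combinations then forces $g\cdot\bar v=\bar v$, so $g\cdot c=c$, confirming that $c$ is a $G$-centre. Applying the forward direction of Theorem \ref{thm:partialTCC} to $c$ supplies the associated bounded admissible integral quasi-state $\Upsilon$ with $N_G(\Sigma)\subseteq C_G(\Upsilon)$, casting the proof within the paper's quasi-state formalism. The principal obstacle is securing the non-vanishing $\bar v\neq 0$, which is exactly where the non-cr hypothesis enters through the separating character $\alpha^*$; the possible presence of a non-trivial lineality space for $C_T$ requires passing to the canonical $L^\perp$-lift, whose $N_G(\Sigma)$-equivariance follows from the $G$-invariance of the norm on $Y(\QQ)$.
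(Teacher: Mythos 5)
Your argument is genuinely different from the paper's. The paper constructs an auxiliary finite-dimensional $G$-module $V$, takes the subset $U$ of vectors whose numerical function $\mu(\Theta_{V,x},\cdot)$ is nonnegative and somewhere finite and positive on $C=\zeta^{-1}(\Sigma)\cup\{0\}$, forms the admissible integral quasi-state $\Theta_{V,U}$ and applies Corollary~\ref{cor:admissiblecase} (via Lemma~\ref{lem:unionofstates}(v) and Remark~\ref{rem:movethetamoveU}); you instead produce a candidate centre directly as an average of extremal directions inside the single apartment. Your route is close in spirit to the ``centre of a closed convex subset of a sphere'' argument that the paper explicitly mentions and sets aside in the remark following Theorem~\ref{thm:apt}, on the grounds that it does not obviously produce a point of $\Delta(\QQ)$ --- which is exactly where your gap lies.

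You take the $v_i$ in $\Delta_T(\QQ)$ and assert $\bar v=\tfrac1k\sum v_i\in V_T(\QQ)$. But $\Delta_T(\QQ)$ consists of \emph{unit} vectors in $V_T(\RR)$, which are typically irrational, and the arithmetic mean of unit representatives of rational extremal rays need not lie on a rational ray (already for the rays through $(1,0)$ and $(1,1)$ with the standard inner product the mean lies on an irrational ray); then $\zeta(\bar v)$ may fall in $\Delta(\RR)\setminus\Delta(\QQ)$ and $c\in\Sigma$ fails. Conversely, arbitrary rational representatives of the rays need not be permuted by $N_G(\Sigma)$ --- only the rays themselves are. The fix is to pick a canonical rational representative on each ray that $N_G(\Sigma)$ visibly permutes, for instance the unique shortest element of $Y_T$ on each ray (using that $\left\|\,\right\|$ is integer-valued on $Y_T$ and $G$-invariant, cf.\ Lemma~\ref{lem:cvxopt}(iii)--(iv)); with that choice $\bar v\in Y_T(\QQ)$ and the permutation argument is sound. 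You also quietly use apartment-independence of $k$-point linear combinations, of the induced inner product, and hence of $W\cap L^\perp$; the paper records only the two-point case, so these need a brief induction, and the equivariance claim itself requires noting that the set $\varphi(C_T)$ together with its linear, metric and lattice structures is $N_G(\Sigma)$-stable as a subset of $V(\QQ)$. Finally, the concluding appeal to the forward direction of Theorem~\ref{thm:partialTCC} is redundant: you have already exhibited the centre, and re-encoding it as a quasi-state adds no content.
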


\begin{proof}
Let $C = \zeta^{-1}(\Sigma) \cup \{0\}$.
Then $C$ is a saturated convex polyhedral cone of finite type in $Y(\QQ)$.
Let $T$ be a maximal torus of $G$ such that $\Sigma$ is contained in the apartment corresponding to $T$.
Then $C_T$ is a convex polyhedral cone in $Y_T(\QQ)$, and $\zeta(C_T\setminus\{0\}) = \Sigma$.
Since $C_T \subseteq Y_T(\QQ)$, we can find a subset
$\{\alpha_1, \ldots, \alpha_r\} \subset X_T$ defining $C_T$;
i.e., $C_T = \{\lambda \in Y_T(K) \mid \langle \lambda,\alpha_i \rangle \geq 0 \textrm{ for all } i\}$.
Suppose $\Sigma$ is not $\Delta(\QQ)$-cr; then there exists $y\in \Sigma$ such that $y$ has
no opposite in $\Sigma$, so
there exists $\lambda\in C_T$ corresponding to $y$ such that $-\lambda\not\in C_T$.
It follows that $\langle \lambda,\alpha_i\rangle> 0$ for some $i$.
To ease notation, let $\beta = \alpha_i$.

Now let $V$ be a finite-dimensional
representation of $G$ such that the
weight space $V_\beta$ with respect to $T$ is non-zero.
Let $U$ be the set of vectors $x \in V$ such that $\mu(\Theta_{V,x},\cdot)$ is non-negative on $C$ and
takes a finite positive value somewhere on $C$, where $\Theta_{V,x}$ is the
admissible quasi-state defined in the proof of Lemma \ref{lem:ThetaAU} (taking $A$ to be $V$).
We have $\langle \nu,\beta\rangle \geq 0$ for all $\nu \in C_T$, and
$\langle \lambda,\beta \rangle >0$, so for any $0 \neq x \in V_\beta$, we have $x \in U$.
Thus $U$ is a non-empty subset of $V$, and $U \neq \{0\}$.

We claim that $N_G(\Sigma) \subseteq N_G(U)$.
To see this, let $g \in N_G(\Sigma)$ and $x \in U$.
Then for all $\nu \in C$, we have $g \inverse \cdot \nu \in C$ and
thus $\mu(\Theta_{V,x},g\inverse \cdot \nu) \geq 0$, so
$$
\mu(\Theta_{V,g\cdot x}, \nu) = \mu(g_*^{-1}\Theta_{V,g\cdot x},g\inverse \cdot\nu) = \mu(\Theta_{V,x}, g\inverse \cdot \nu) \geq 0,
$$
where the first equality follows from Lemma \ref{lem:propertiesofstates}(ii) and the second from Remark \ref{rem:movethetamoveU}.
Moreover, there exists $\nu \in C$
for which these values are all positive,
and this shows that $g\cdot x \in U$, as required.

Let $\Theta_{V,U}$ be the admissible quasi-state given by Lemma \ref{lem:ThetaAU}.
Then $\Theta_{V,U}$ is the union of the quasi-states $\Theta_{V,x}$ as $x$ runs over $U$,
so we can find $\gamma \in C_T$ such that $0<\mu(\Theta_{V,U},\gamma)<\infty$, by Lemma \ref{lem:unionofstates}(v).
Moreover, $N_G(\Sigma) \subseteq N_G(U) \subseteq C_G(\Theta_{V,U})$, by the previous paragraph and Remark \ref{rem:movethetamoveU}.
We have now verified all the hypotheses necessary to apply Corollary \ref{cor:admissiblecase},
which finishes the proof.
\end{proof}

\begin{rem}
If one is working over $\RR$ instead of $\QQ$, so that it makes sense to ask whether a subset is contractible or not,
then it is known that a closed convex contractible subset of a sphere contains a centre,
and this centre is fixed by all the isometries of the sphere that stabilize the subset
(this follows for example from \cite[Lem.\ 1]{white}).
Now suppose $\Sigma$ is a closed convex contractible subset of a single apartment $\Delta_T(\RR)$
of $\Delta(K)$.
Then for any other apartment $\Delta_{T'}(K)$ of $\Delta(\RR)$ containing $\Sigma$, there exists
an isomorphism $\Delta_{T'}(\RR) \to \Delta_T(\RR)$ fixing $\Sigma$ pointwise, by the
building axioms.
Thus any $\Delta(\RR)$-automorphism stabilizing $\Sigma$ actually stabilizes $\Delta_T(\RR)$,
modulo an automorphism which fixes $\Sigma$ pointwise.
Now $\Delta_T(\RR)$ is a sphere, so the result follows.

If $\Sigma\subseteq \Delta(\RR)$ is a convex polyhedral subset of finite type which is contained in a single apartment $\Delta_T(\RR)$,
then it is easily seen that $\Sigma$ is closed.
Thus if $\Sigma$ is not $\Delta(\RR)$-completely reducible, then the argument of the previous paragraph shows that $\Sigma$ has an $\Aut(\Delta(\RR))$-centre $c$.
Hence Theorem \ref{thm:apt} also holds when $\QQ$ is replaced by $\RR$.
The above argument argument does not, however, tell us that if $\Sigma$ is defined by a $\QQ$-quasi-state, then $c$ belongs to $\Delta(\QQ)$.
Our proof of Theorem \ref{thm:apt} is therefore of independent interest.
\end{rem}

\begin{rem}\label{rem:fishcakes}
Note that for $\Theta_{A,U}$ as in Lemma \ref{lem:ThetaAU} above,
we have $\mu(\Theta_{A,U},\lambda)>0$ if and only if $\lim_{a\to 0} \lambda(a)\cdot x = 0$
for all $x \in U$.
In Theorem \ref{thm:apt} above, $\mu(\Theta_{A,U},\cdot)$ attains a positive value on the cone $C$.
However, we may find ourselves in a situation where $\mu(\Theta_{A,U},\lambda) = 0$ for all $\lambda \in D_{A,U}(K)$.
This happens, for example, if $U = \{x\}$ is a singleton and the closure of the $G$-orbit
$G\cdot x$ does not contain $0$.

Since our methods rely on optimizing over quasi-states whose numerical functions attain strictly positive values,
we have to introduce further quasi-states to the analysis.
In particular, we have to consider the quasi-state $\Upsilon$ in Proposition \ref{prop:kempfstate} below.
See also Remark \ref{rem:needtwostates}.
\end{rem}

\begin{prop}
\label{prop:kempfstate}
Suppose $U$ is properly uniformly $S$-unstable.
Then there exist bounded admissible integral quasi-states $\Xi=\Xi_{A,U}$
and $\Upsilon=\Upsilon_{A,U,S}$ such that:
\begin{itemize}
\item[(i)] $D_{A,U}(K) = Z(\Xi)$ and $N_G(D_{A,U}(K)) = C_G(\Xi)$.
\item[(ii)] $|A,U|_S =
\{\lambda\in |A,U| \mid \mu(\Upsilon,\lambda) > 0\}$.
\item[(iii)] $N_G(D_{A,U}(K)) \subseteq C_G(\Upsilon)$.
\end{itemize}



\end{prop}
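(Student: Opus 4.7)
The plan is to build $\Xi$ from Lemma \ref{lem:ThetaAU} via averaging, and to construct $\Upsilon$ by a Kempf-style construction using the defining ideal of $S$. For part (i), I would start with the bounded admissible integral quasi-state $\Theta_{A,U}$ of Lemma \ref{lem:ThetaAU}, which satisfies $Z(\Theta_{A,U}) = D_{A,U}(K)$ and $N_G(|A,U|) \subseteq C_G(\Theta_{A,U})$ by Remark \ref{rem:movethetamoveU}. To upgrade the latter to equality, set $\Xi := \bigcup_{h \in N_G(D_{A,U}(K))} h_*\Theta_{A,U}$. Lemma \ref{lem:averaging} shows that $\Xi$ is bounded, admissible, integral, and $N_G(D_{A,U}(K)) \subseteq C_G(\Xi)$. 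For each $h \in N_G(D_{A,U}(K))$, Lemma \ref{lem:qstate1} gives $Z(h_*\Theta_{A,U}) = h\cdot D_{A,U}(K) = D_{A,U}(K)$, so $Z(\Xi) = D_{A,U}(K)$; applying Lemma \ref{lem:qstate1} again gives $C_G(\Xi) \subseteq N_G(Z(\Xi)) = N_G(D_{A,U}(K))$, establishing (i).

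For $\Upsilon$, I would embed $A$ $G$-equivariantly into a finite-dimensional rational $G$-module $V$ via \cite[Lem.~1.1]{kempf}. Let $J_S \subseteq k[V]$ be the $G$-stable ideal of functions vanishing on $S$. Since $k[V]$ is Noetherian and a locally finite $G$-module, there is a finite-dimensional $G$-submodule $W \subseteq J_S$ that generates $J_S$ as an ideal. For each $x \in U$ and each maximal torus $T$ of $G$, set
\[
\Upsilon_{V,x,S}(T) := \{\chi \in X_T \mid f_\chi(x) \neq 0 \text{ for some } f \in W\},
\]
where $f = \sum_\chi f_\chi$ is the $T$-weight decomposition of $f$ as a function (with the convention $f_\chi(t\cdot y) = \chi(t) f_\chi(y)$). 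Since $W$ is finite-dimensional, $\Upsilon_{V,x,S}(T)$ is a finite subset of the fixed set of $T$-weights on $W$, so $\Upsilon_{V,x,S}$ is an integral quasi-state; an argument analogous to Kempf's proof of \cite[Lem.~3.2]{kempf}, based on the fact that for $h = ul$ with $l \in L_\lambda$ and $u \in R_u(P_\lambda)$, $l$ preserves $\lambda$-eigenspaces and $u$ only raises $\lambda$-weights, shows that $\Upsilon_{V,x,S}$ is bounded and admissible. Finally, set
\[
\Upsilon := \Upsilon_{A,U,S} := \bigcup_{h \in N_G(D_{A,U}(K))}\; \bigcup_{x \in U} h_*\Upsilon_{V,x,S};
\]
by Lemmas \ref{lem:unionofstates} and \ref{lem:averaging}, $\Upsilon$ is a bounded admissible integral quasi-state with $N_G(D_{A,U}(K)) \subseteq C_G(\Upsilon)$, giving (iii).

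To verify (ii), fix $\lambda \in |A, U|$, so that $\lim_{a \to 0} \lambda(a)\cdot x$ exists in $V$ for every $x \in U$. For $f \in W$, $f(\lambda(a)\cdot x) = \sum_\chi a^{\langle \lambda, \chi\rangle} f_\chi(x)$, so $\lim_{a\to 0} f(\lambda(a)\cdot x) = 0$ if and only if $\langle \lambda,\chi\rangle > 0$ for every $\chi \in \Upsilon_{V,x,S}(T)$, if and only if $\mu(\Upsilon_{V,x,S},\lambda) > 0$. Since $W$ generates $J_S$ and $S$ is the vanishing locus of $J_S$, $\lim_{a \to 0}\lambda(a)\cdot x \in S$ if and only if $\lim_{a\to 0} f(\lambda(a)\cdot x) = 0$ for every $f \in W$, so $\lambda \in |A,U|_S$ iff $\mu\bigl(\bigcup_{x\in U}\Upsilon_{V,x,S},\lambda\bigr) > 0$. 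By Lemma \ref{lem:propertiesofstates}(ii) together with the admissibility of $\Upsilon_{V,x,S}$, one has $\mu(h_*\Upsilon_{V,x,S},\lambda) = \mu(\Upsilon_{V,x,S},h^{-1}\cdot\lambda)$; since $h^{-1}\cdot\lambda \in |A,U|$ and $N_G(|A,U|)\subseteq N_G(|A,U|_S)$ by Lemma \ref{lem:ThetaAU}, the positivity condition is preserved by the averaging, yielding (ii). The main obstacle is the admissibility of $\Upsilon_{V,x,S}$: the sets $\Upsilon_{V,x,S}(T)$ are built from the evaluation of \emph{all} weight components of $W$ at a single point $x$, so one must argue carefully that moving $x$ to $h\cdot x$ for $h \in P_\lambda$ does not lower the minimum pairing with $\lambda$ (the reverse inequality then coming symmetrically from $h^{-1} \in P_\lambda$).
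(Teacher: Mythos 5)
Your proposal is essentially correct and follows the same route as the paper, which also constructs $\Xi$ by averaging $\Theta_{A,U}$ over $H:=N_G(D_{A,U}(K))$ and builds $\Upsilon$ from a Kempf-type state attached to $S$, followed by the same averaging. The one place you diverge is in how you obtain that state: the paper simply invokes \cite[Lem.~1.1(b)]{kempf} to get a $G$-equivariant morphism $f\colon A\to W$ with $f^{-1}(\{0\})=S$ scheme-theoretically, and then sets $\Upsilon_0=\Theta_{W,f(U)}$, which is bounded, admissible and integral by Lemma \ref{lem:ThetaAU} without any further work. You instead unpack the construction underlying Kempf's lemma: choosing a finite-dimensional generating $G$-submodule $W\subseteq J_S$ and looking at the weight components of its elements at $x$.

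Your closing worry about the admissibility of $\Upsilon_{V,x,S}$ can be dispelled by noticing that your construction \emph{is} a Kempf state in disguise. Let $\phi\colon V\to W^*$ be the $G$-equivariant morphism $\phi(v)=\mathrm{ev}_v|_W$. With your weight convention $f_\chi(t\cdot y)=\chi(t)f_\chi(y)$, the function $f_\chi$ lies in the $(-\chi)$-weight space of $W$ under the standard left action, and because $W$ is $T$-stable the set $\{f_\chi : f\in W\}$ is exactly that weight space. One then checks that $\chi\in\Upsilon_{V,x,S}(T)$ if and only if $\chi$ is a $T$-weight of $W^*$ for which $\phi(x)_\chi\neq 0$, i.e.\ $\Upsilon_{V,x,S}=\Theta_{W^*,\phi(x)}$. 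Since $\phi^{-1}(\{0\})=S$ scheme-theoretically (the coordinate functions of $\phi$ generate $J_S$), this $\phi$ is precisely the morphism supplied by \cite[Lem.~1.1(b)]{kempf}, and admissibility and boundedness of each $\Upsilon_{V,x,S}$ are exactly \cite[Lem.~3.2]{kempf} applied to $\phi(x)\in W^*$. With that observation in place, your parts (ii) and (iii) go through as written (the equivalence $\mu(h_*\Upsilon_0,\lambda)>0\iff\mu(\Upsilon_0,h^{-1}\cdot\lambda)>0$ is Lemma \ref{lem:propertiesofstates}(ii) plus admissibility, and passing to the union over $H$ is Lemma \ref{lem:unionofstates}(iv)), matching the paper's argument.
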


\begin{proof}
Let $H := N_G(D_{A,U}(K))$, and note that $H = N_G(|A,U|)$, by Lemma \ref{lem:ThetaAU}.
Define
$$
\Xi := \Xi_{A,U} := \bigcup_{h\in H} h_*\Theta,
$$
where $\Theta$ is the integral quasi-state given in Lemma \ref{lem:ThetaAU}.
Then, by Lemma \ref{lem:averaging},
$\Xi$ is a bounded admissible integral quasi-state and $H \subseteq C_G(\Xi)$.
Moreover, $Z(\Theta) = D_{A,U}(K)$, so by Lemma \ref{lem:ThetaAU},
for every $h\in H$ we have
$$
Z(h_*\Theta) = h\cdot Z(\Theta) = h\cdot D_{A,U}(K) = D_{A,U}(K) = Z(\Theta),
$$
by Lemma \ref{lem:qstate1}.
So $Z(\Xi) = D_{A,U}(K)$ and $C_G(\Xi) \subseteq H$.
This completes the proof of part (i).

For (ii) and (iii), we find a $G$-equivariant morphism $f:A \to W$, where $W$ is
a finite-dimensional rational $G$-module and $f\inverse(\{0\}) = S$ (scheme-theoretic preimage),
as in \cite[Lem.\ 1.1(b)]{kempf}.
We then let $\Upsilon_0 = \Theta_{W,f(U)}$,
in the notation of
\eqref{eqn:theta}.
Now it is easy to see that $|A,U|_S = |A,U| \cap |W,f(U)|_{0}$.
Moreover,
$|W,f(U)|_{0} = \{\lambda\in Y\mid \mu(\Theta_{W,f(U)},\lambda)>0 \}$.
Now, if we define
$$
\Upsilon:= \Upsilon_{A,U,S} := \bigcup_{h\in H} h_*\Upsilon_0,
$$
part (iii) follow from Lemma \ref{lem:averaging}. 

If $\lambda\in |A,U|$, then $\lambda\in |A,U|_S$ if and only if $\mu(\Upsilon_0,\lambda)>0$
(cf. Remark \ref{rem:fishcakes}).  Since $H$ normalizes $|A,U|$,
 clearly $H$ normalizes $|A,U|_S$, and it follows that if $\lambda\in |A,U|$ and $h\in H$, then $\mu(\Upsilon_0,\lambda)>0$ if and only if $\mu(\Upsilon_0,h^{-1}\cdot \lambda)>0$ if and only if $\mu(h_*\Upsilon_0,\lambda)>0$, where the last equivalence comes from Lemma \ref{lem:propertiesofstates}(ii).  Part (ii) now follows from Lemma \ref{lem:unionofstates}(iv).
\end{proof}

\begin{rem}
\label{rem:getkempf}
Using the quasi-states $\Xi$ and $\Upsilon$ from Proposition \ref{prop:kempfstate}, we can now
recover many of the existing optimality results from the literature by applying Theorem \ref{thm:kempf2.2}.
For example, to get Kempf's \cite[Thm.\ 3.4]{kempf}, we consider the case that $U = \{x\}$ is a singleton:
then Theorem \ref{thm:kempf2.2} supplies us with an optimal class $\Lambda$ of cocharacters attached to $x$,
and the corresponding optimal parabolic subgroup $P$ of $G$ contains the stabilizer $C_G(x)$,
by Proposition \ref{prop:kempfstate}
and Remark \ref{rem:funct}.  If $\lambda\in \Lambda$, then $n\lambda\in |A,U|$ for some $n\in \NN$; Proposition \ref{prop:kempfstate}(ii) ensures that $n\lambda$ actually belongs to $|A,U|_S$, as is required in \cite[Thm.\ 3.4]{kempf}.  In the more general setting that $U$ is an arbitrary subset of $A$, we obtain results on uniform $S$-instability from \cite{GIT}.
In this case, again thanks to Theorem \ref{thm:kempf2.2} and Proposition \ref{prop:kempfstate}, we obtain
\cite[Thm.\ 4.5]{GIT}.

We have now also set up all the necessary preliminaries to fully interpret the results of Kempf \cite{kempf} and
Hesselink \cite{He} in the language of buildings.
Recall that we set $E_{A,U}(K) = \zeta(D_{A,U}(K)\setminus\{0\}) \subseteq \Delta(K)$.
Now Lemma \ref{lem:ThetaAU} says that $E_{A,U}(K)$ is a convex polyhedral subset of finite type in $\Delta(K)$,
and Theorem \ref{thm:partialTCC} combined with Proposition \ref{prop:kempfstate} says that $E_{A,U}(K)$ has a $G$-centre if
$U$ is properly uniformly $S$-unstable.
Note that in this case, we also have that $E_{A,U}(K)$ is not $\Delta(K)$-cr.
This follows using similar arguments to those in Remark \ref{rem:needtwostates}:
there is an admissible quasi-state whose numerical function attains a finite positive value on
the cone $D_{A,U}(K)$.
Thus, interpreted in the building $\Delta(K)$, Kempf's result \cite[Thm.\ 3.4]{kempf} really is proving a
special case of the Centre Conjecture \ref{conj:tcc}.
\end{rem}

\begin{rem}
Keeping the notation from the previous remark,
it is worth stressing here that $E_{A,U}(K)$ is \emph{not}
a subcomplex of $\Delta(K)$ in general, so the methods in this section apply to
cases of Conjecture \ref{conj:tcc} not covered by Theorem \ref{thm:knowntcc}.

For an easy example of this, let $G = \SL_3(k)$ acting on its natural module $V = k^3$, and let $v = (1,1,0) \in V$.
Consider the cocharacters $\lambda$ and $\mu \in Y$ given by
$\lambda(a) = \diag(a^2,a,a^{-3})$ and $\mu(a) = \diag(a^3,a^{-1},a^{-2})$ for $a \in k^*$.
Then $P_\lambda = P_\mu$ is the Borel subgroup of $G$ consisting of upper triangular matrices,
but $\lambda$ destabilizes $v$ whereas $\mu$ does not.
Hence $E_{V,v}(K)$ does not contain the
whole simplex corresponding to $P_\lambda$, and
hence cannot be a subcomplex of $\Delta(K)$.
\end{rem}


\begin{thm}
\label{thm:precise}
Let $\Sigma$ be a convex polyhedral subset of finite type of
$\Delta(K)$, and let $C = \zeta\inverse(\Sigma) \cup \{0\}$.
Let $A$ be an affine $G$-variety,
$S$ a non-empty closed $G$-stable subvariety of $A$,
and $U$ a subset of $A$.
If $\Upsilon$ attains a finite positive value on $C$, where $\Upsilon = \Upsilon_{A,U,S}$
is the quasi-state from Proposition \ref{prop:kempfstate}(ii),
then $\Sigma$ has a $C_G(\Upsilon)$-centre.
If, further, $\mu(\Upsilon,\cdot)$ takes finite positive values on the whole of some $N_G(\Sigma)$-orbit
in $C$, then $\Sigma$ has a $G$-centre.
\end{thm}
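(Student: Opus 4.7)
The first claim is essentially Corollary \ref{cor:admissiblecase} in disguise: Proposition \ref{prop:kempfstate}(ii) supplies $\Upsilon = \Upsilon_{A,U,S}$ as a bounded admissible integral quasi-state, so under the hypothesis that $\mu(\Upsilon,\cdot)$ attains a finite positive value on $C$, Corollary \ref{cor:admissiblecase} immediately produces a $C_G(\Upsilon)$-centre of $\Sigma$. The real work is in the second claim, and the strategy is to replace $\Upsilon$ by an $N_G(\Sigma)$-invariant enlargement so that the \emph{further} clause of Corollary \ref{cor:admissiblecase} becomes available.

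More precisely, the plan is to set
\[
\Upsilon' := \bigcup_{h \in N_G(\Sigma)} h_*\Upsilon
\]
and apply Lemma \ref{lem:averaging}: this at once furnishes $\Upsilon'$ as a bounded $K$-quasi-state, admissible wherever $\Upsilon$ is, and with $N_G(\Sigma) \subseteq C_G(\Upsilon')$. Since $\Upsilon$ is admissible on all of $Y(K)$ by Proposition \ref{prop:kempfstate}, so is $\Upsilon'$. Once we also know that $\mu(\Upsilon',\cdot)$ attains a finite positive value somewhere on $C$, Corollary \ref{cor:admissiblecase} will yield a $G$-centre of $\Sigma$.

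The main obstacle --- and the only place where the precise orbit hypothesis is used --- is verifying this finite positivity. The difficulty is structural: averaging enlarges $\Upsilon$, so finite positivity of $\mu(\Upsilon,\cdot)$ at a single point of $C$ need not survive the averaging, whereas positivity along a whole $N_G(\Sigma)$-orbit does. Concretely, pick $\lambda_0$ in the distinguished orbit and fix a maximal torus $T$ with $\lambda_0 \in Y_T(K)$. For every $h \in N_G(\Sigma)$, Lemma \ref{lem:propertiesofstates}(ii) gives
\[
\mu(h_*\Upsilon, T, \lambda_0) = \mu(\Upsilon, h^{-1}Th, h^{-1}\cdot\lambda_0),
\]
which is finite and positive by the orbit hypothesis applied to $h^{-1}\cdot\lambda_0$. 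Boundedness of $\Upsilon'$ makes $\Upsilon'(T)$ finite, and $\Upsilon(T) \subseteq \Upsilon'(T)$ is non-empty because $\mu(\Upsilon,T,\lambda_0)$ is finite, so $\mu(\Upsilon',T,\lambda_0)$ is finite; Lemma \ref{lem:unionofstates}(iv) then gives $\mu(\Upsilon',T,\lambda_0)>0$. Applying Corollary \ref{cor:admissiblecase} with $\Upsilon'$ in place of $\Upsilon$ then completes the proof.
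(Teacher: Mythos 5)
Your proof is correct and follows essentially the same approach as the paper: both deduce the first claim directly from Corollary \ref{cor:admissiblecase}, and both handle the second by replacing $\Upsilon$ with the averaged quasi-state $\Upsilon' = \bigcup_{h\in N_G(\Sigma)} h_*\Upsilon$, invoking Lemma \ref{lem:averaging} for admissibility and $N_G(\Sigma)\subseteq C_G(\Upsilon')$, and then Lemmas \ref{lem:propertiesofstates}(ii) and \ref{lem:unionofstates}(iv) to propagate finite positivity along the orbit. You have simply spelled out the calculation that the paper leaves implicit, and the details are sound.
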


\begin{proof}
For the first assertion, just apply Corollary \ref{cor:admissiblecase}.
For the second, replace $\Upsilon$ with $\Upsilon' := \bigcup_{g \in N_G(\Sigma)} g_*\Upsilon$
and note that $\Upsilon'$ is admissible and $N_G(\Sigma) \subseteq C_G(\Upsilon')$,
by Lemma \ref{lem:averaging}.
Since $\mu(\Upsilon,\cdot)$ takes finite positive values on the whole of some $N_G(\Sigma)$-orbit in $C$, so
does $\mu(\Upsilon',\cdot)$, by Lemmas \ref{lem:propertiesofstates}(ii) and \ref{lem:unionofstates}(iv).
Now apply Corollary \ref{cor:admissiblecase} to $\Sigma$ and $\Upsilon'$.
\end{proof}

\begin{rem}
\label{rem:twosettings}
We have two different settings where Theorem \ref{thm:precise} is useful.
First, suppose we have a convex polyhedral subset $\Sigma$ of
$\Delta(K)$ such that $\Sigma$ is not $\Delta(K)$-cr.
Then we want to find a $G$-centre of $\Sigma$.
Roughly speaking, Theorem \ref{thm:precise} says that we can do
this by finding suitable $A$, $S$ and $U$ such that
some element of $\zeta^\inverse(\Sigma)$ properly destabilizes $U$ into $S$.
For an example of this, see Theorem \ref{thm:tcc} below and Theorem \ref{thm:apt}.

Second, suppose that we have suitable $A$, $S$ and  $U$, as above,
and we want to find a $G$-centre of $E_{A,U}(K)$ subject to the extra condition that
this centre also lies in some convex subset $\Sigma\subseteq \Delta(K)$. Theorem \ref{thm:precise} helps us to do this.
For example, suppose $H$ is a reductive subgroup of $G$.
Then $\Sigma = \zeta(Y_H(K)\setminus\{0\})$ is a convex subset of $\Delta(K)$,
and if Theorem \ref{thm:precise} applies, it provides a $G$-centre of $E_{A,U}(K)$ which ``comes from''
a cocharacter of $H$.
See \cite{BMRT:relative} for similar ideas.
\end{rem}


We finish this section by describing how to apply our results to another case
of the Centre Conjecture \ref{conj:tcc} which does not appear to have anything to do
with GIT (Theorem \ref{thm:tcc} below).
The idea is that finding a suitable $G$-action on an affine variety $A$ can help to
establish the existence of a centre.

Recall the material on $G$-complete reducibility introduced in
Section \ref{sec:gcr}.
Theorem \ref{thm:tcc} asserts the existence of
a $G$-centre of the convex non-$\Delta(K)$-cr subset $\Sigma$ of $\Delta(K)$,
provided $\Sigma$ is fixed pointwise by a suitable subgroup
of $G$.
We make this precise in our next definition.

\begin{defn}
\label{def:G-contractible}
Let $\Sigma$ be a convex polyhedral subset of $\Delta(K)$, and let $H$ be a subgroup of $G$.
We say that $H$ \emph{witnesses the fact that $\Sigma$ is not $\Delta(K)$-cr} if
$\Sigma \subseteq \Delta(K)^H$
and there is a $y \in \Sigma$
which has no opposite in $\Delta(K)^H$.
Note that, in this case, neither $\Sigma$ nor $\Delta(K)^H$
is $\Delta(K)$-cr,
so in particular, $H$ is not $G$-cr, \cite[\S3]{serre2}.
\end{defn}



\begin{thm}
\label{thm:tcc}

Let $\Sigma \subseteq \Delta(K)$ be a convex polyhedral subset of
finite type.
If there exists a subgroup of $G$ which witnesses the fact that $\Sigma$ is not $\Delta(K)$-cr, then $\Sigma$ has a $G$-centre.
\end{thm}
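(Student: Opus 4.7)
The plan is to apply Theorem \ref{thm:precise} to an affine $G$-variety arising from a generic tuple of (an enlargement of) the witness subgroup. To ensure the eventual quasi-state carries an $N_G(\Sigma)$-action, I would first replace $H$ by the subgroup $H'$ generated by $\{gHg^{-1}\mid g\in N_G(\Sigma)\}$. Each conjugate $gHg^{-1}$ fixes $g\cdot\Sigma=\Sigma$ pointwise, so $\Sigma\subseteq\Delta(K)^{H'}\subseteq\Delta(K)^H$, and the witness point $y\in\Sigma$ still has no opposite in $\Delta(K)^{H'}$. Hence $H'$ also witnesses---so in particular is not $G$-cr---and $N_G(\Sigma)\subseteq N_G(H')$ by construction.

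Next, I would pick a generic tuple $\mathbf{h}\in(H')^n$ of $H'$ in the sense of \cite[Def.\ 5.4]{GIT} (for $n$ sufficiently large), and set $A=G^n$ with $G$ acting by simultaneous conjugation, $U=\{\mathbf{h}\}$, and $S=\overline{G\cdot\mathbf{h}}\setminus G\cdot\mathbf{h}$. Since $H'$ is not $G$-cr, \cite[Thm.\ 5.8(iii)]{GIT} tells us $G\cdot\mathbf{h}$ is not closed, so $S$ is a nonempty closed $G$-stable subvariety of $A$ and $\mathbf{h}$ is properly uniformly $S$-unstable by Hilbert-Mumford. Proposition \ref{prop:kempfstate} then supplies a bounded admissible integral quasi-state $\Upsilon=\Upsilon_{A,U,S}$ with $N_G(|A,U|)\subseteq C_G(\Upsilon)$ such that $\mu(\Upsilon,\lambda)>0$ iff $\lambda$ properly destabilizes $\mathbf{h}$ into $S$. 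Genericity of $\mathbf{h}$ gives $|A,U|=\{\lambda\in Y\mid H'\subseteq P_\lambda\}$, and since $N_G(\Sigma)\subseteq N_G(H')$ normalizes this set, one gets $N_G(\Sigma)\subseteq C_G(\Upsilon)$.

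It then suffices to find $\lambda\in C:=\zeta^{-1}(\Sigma)\cup\{0\}$ with $\mu(\Upsilon,\lambda)>0$; Corollary \ref{cor:admissiblecase} will then deliver the desired $G$-centre. I would pick $\lambda$ with $\zeta(\lambda)=y$, so that $\lambda\in C$; since $y\in\Delta(K)^{H'}$ we have $H'\subseteq P_\lambda$, so $\lambda$ destabilizes $\mathbf{h}$. To see that the destabilization is proper, suppose otherwise: then $\lim_{a\to 0}\lambda(a)\cdot\mathbf{h}\in G\cdot\mathbf{h}$, and the generic-tuple characterization of $G$-complete reducibility underpinning \cite[Thm.\ 5.8(iii)]{GIT} (cf.\ the methods of \cite{BMR}) would supply $u\in R_u(P_\lambda)$ with $uH'u^{-1}\subseteq L_\lambda$. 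But then $u^{-1}\cdot\lambda$, and hence $-(u^{-1}\cdot\lambda)$, centralizes $H'$, so $\zeta(-(u^{-1}\cdot\lambda))\in\Delta(K)^{H'}\subseteq\Delta(K)^H$; and since $u^{-1}\in R_u(P_\lambda)$, $\zeta(u^{-1}\cdot\lambda)=\zeta(\lambda)=y$, making $\zeta(-(u^{-1}\cdot\lambda))$ an opposite of $y$ inside $\Delta(K)^H$---a contradiction to the witness property.

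The main obstacle is precisely this final contradiction, which depends on the delicate local equivalence ``$\lim_{a\to 0}\lambda(a)\cdot\mathbf{h}\in G\cdot\mathbf{h}$ if and only if some $R_u(P_\lambda)$-conjugate of $H'$ sits in $L_\lambda$''---the orbit-theoretic heart of the generic-tuple approach to $G$-complete reducibility. Everything else amounts to bookkeeping with the machinery already assembled in Proposition \ref{prop:kempfstate} and Corollary \ref{cor:admissiblecase}.
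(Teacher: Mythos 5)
Your strategy --- enlarge $H$ to make it $N_G(\Sigma)$-invariant, build a quasi-state $\Upsilon$ from a generic tuple, and invoke Corollary \ref{cor:admissiblecase} --- is sound and parallels the paper's, but you make different GIT choices and leave one genuine gap. On the choices: you take $U=\{\mathbf{h}\}$ with $S=\overline{G\cdot\mathbf{h}}\setminus G\cdot\mathbf{h}$, so that showing $\mu(\Upsilon,\lambda)>0$ requires arguing, cocharacter by cocharacter, that the limit actually escapes the orbit. The paper instead replaces $H$ by $\bigcap_{\nu\in C}P_\nu$, takes $U=H^n$ and $S=\overline{G\cdot(H')^n}$ where $H'$ is the image of $H$ under $P_\lambda\to L_\lambda$; then \emph{every} $\nu\in Y_T$ with $P_\nu=P_\lambda$ automatically destabilizes $H^n$ into $S$, and ``properness'' is established once and for all via a dimension count from the proof of \cite[Thm.\ 5.16]{GIT}. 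Your contradiction argument can be made to work --- if the limit stayed in $G\cdot\mathbf{h}$, standard GIT arguments give $\lim_{a\to 0}\lambda(a)\cdot\mathbf{h}=u\cdot\mathbf{h}$ for some $u\in R_u(P_\lambda)$, and the identity $L_\lambda=G\cap C_{\Mat_m}(\lambda(k^*))$ then forces the whole group $uH'u^{-1}$, not just the tuple components, into $L_\lambda$ --- but this is heavier machinery than the paper's route, and you should also pass to the Zariski closure of your $H'$ so that a generic tuple exists.

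The genuine gap is the case $K=\RR$. Your argument that $\mu(\Upsilon,\lambda)>0$ interprets $\lambda$ as a cocharacter properly destabilizing $\mathbf{h}$, which makes literal sense only if $\lambda$ is a positive multiple of an element of $Y$; for $K=\QQ$ one scales, but for $K=\RR$ a $\lambda\in Y_T(\RR)$ with $\zeta(\lambda)=y$ need have no such multiple. The paper handles this with a separate perturbation argument: take integral generators $\tau_1,\dots,\tau_r$ of the cone $C^P_T$ so that $P_\nu = P$ precisely on the strictly positive combinations, perturb to rational $\tau_i'$ still satisfying $P_{\tau_i'}=P$, write $\lambda$ as a positive real combination of the $\tau_i'$, and finish with the concavity of $\mu(\Upsilon,\cdot)$ from Lemma \ref{lem:propertiesofstates}(i). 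Without an argument of this type, your proof as written covers only $K=\QQ$.
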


\begin{proof}
Let $H$ be a subgroup of $G$ such that
$\Sigma \subseteq \Delta(K)^H$ and let
$y \in \Sigma$ such that $y$ has no opposite in $\Delta(K)^H$.  Let $C= \zeta^{-1}(\Sigma)\cup\{0\}$.
We may replace $H$ with the subgroup $\bigcap_{\nu\in C} P_\nu$
without affecting the hypotheses of the theorem;
this replacement ensures that $N_G(\Sigma) \subseteq N_G(H)$.
Let $\lambda \in C$ such that $y = \zeta(\lambda)\in \Sigma$, 
and let $T$ be a maximal torus of $G$ such that $\lambda\in Y_T(K)$.
Let $P = P_\lambda$ and $L = L_\lambda$.
Note that since $y$ has no opposite in $\Sigma$, $-(u\cdot\lambda) \not\in C$ for any $u \in R_u(P_\lambda)$.
We want to apply Theorem \ref{thm:precise}, so we need to verify
the conditions there.

We have $H\subseteq P$.  Suppose $H$ is contained in a Levi subgroup $M$ of $P$.
Then $M$ is of the form $M= L_{u\cdot \lambda}$ for some $u\in R_u(P)$, so $H$ fixes $u\cdot \lambda$, so $H$ fixes $-(u\cdot \lambda)$.
But $y$ has no opposite in $\Delta(K)^H$, so we have a contradiction.
Thus $H$ is not contained in any Levi subgroup of $P$.
Let $H'$ denote the image of $H$ under the canonical projection $P \to L$.
Then $H$ and $H'$ are not conjugate, by \cite[Thm.\ 5.8]{GIT}.


Now pick $n\in \NN$ such that $H$ admits a generic $n$-tuple (see Section \ref{sec:gcr}),
and recall that $G$ acts on $G^n$ by simultaneous conjugation.
There exists $\nu \in Y_T$ (a genuine cocharacter)
such that $P_\nu = P$ and $L_\nu = L$.  Taking the limit along $\nu$ moves this generic tuple for $H$ into $(H')^n$.
Thus, if we set $S= \ovl{G\cdot (H')^n} \subseteq G^n$, we see that
$H^n$ is uniformly $S$-unstable.
Moreover, since $H$ and $H'$ are not $G$-conjugate,
it follows from the proof of \cite[Thm.\ 5.16]{GIT} that $\dim C_G(\mathbf{s}) > \dim C_G(H)$
for any $\mathbf{s} \in S$.
This means that $H^n$ is not contained in $S$, and
thus $H^n$ is \emph{properly} uniformly $S$-unstable.


By Proposition \ref{prop:kempfstate},
there is a bounded quasi-state
$\Upsilon$ such that
$|G^n,H^n|_S
= \{\nu \in |G^n,H^n|\mid \mu(\Upsilon,\nu)>0 \}$,
and $N_G(H) = N_G(H^n) \subseteq C_G(\Upsilon)$.
Since $\nu \in |G^n,H^n|_S$ for all $\nu \in Y_T$ with $P_\nu = P$,
and since we can scale any point of $Y_T(\QQ)$ by a positive integer to give
an element of $Y_T$, we can conclude that every
$\nu \in Y_T(\QQ)$ with $P_\nu = P$ satisfies $\mu(\Upsilon,\nu) >0$.  Hence $\mu(\Upsilon,\lambda)>0$ if $K= \QQ$.
If $K= \RR$, then we need a more complicated argument.
It follows from the proof of Lemma \ref{lem:cxcvx} that for some $r$, the cone $C^P_T$
is generated by cocharacters
$\tau_1,\ldots, \tau_r\in Y_T$ with the property that for any
$\nu\in Y_T(\RR)$, we have $P_\nu = P$ if and only if
$\nu = \sum a_i\tau_i$ with all the $a_i >0$.
In particular, we have $\lambda = \sum a_i\tau_i$ with $a_i >0$ for each $i$.
Let $\epsilon_1,\ldots, \epsilon_r$ be positive rational numbers and define
$\tau_1',\ldots, \tau_r'$ by
$\tau_i'= \tau_i+ \sum_{j\neq i} \epsilon_j \tau_j$.
If we choose the $\epsilon_i$ to be sufficiently small, then we have
$\lambda= \sum_{i=1}^r a_i' \tau_i'$
for some $a_1',\ldots, a_r'>0$.
Since $P_{\tau_i'}= P$ for all $i$, and each $\tau_i' \in Y_T(\QQ)$,
we have $\mu(\Upsilon,\tau_i')>0$ for all $i$ by the arguments above.
Repeated application of Lemma \ref{lem:propertiesofstates}(i) gives
$\mu(\Upsilon,\lambda)>0$.

Finally, note that $N_G(\Sigma) \subseteq N_G(H) \subseteq C_G(\Upsilon)$, so $\mu(\Upsilon,\cdot)$ takes positive values on the orbit $N_G(\Sigma)\cdot \lambda$.
We have now put all the conditions in place to apply Theorem \ref{thm:precise},
which finishes the proof.
\end{proof}

\begin{rem}
Theorem \ref{thm:tcc}
generalizes the main result from \cite{BMR:tits} and also \cite[Thm. 5.31]{GIT}.
In \cite[Thm.\ 3.1]{BMR:tits}, we proved the special case of
Theorem \ref{thm:tcc} when $\Sigma = \Delta(K)^H$.
Note that, under the hypotheses of Theorem \ref{thm:tcc}, $\Delta(K)^H$ is a convex non-$\Delta(K)$-cr subcomplex of $\Delta(K)$.
Thus by Theorem \ref{thm:knowntcc}, $\Delta(K)^H$ admits
an $\Aut \Delta(K)$-centre. However, it does not follow in general that
this centre lies in $\Sigma$,
so Theorem \ref{thm:knowntcc} cannot be applied to find a centre of $\Sigma$.
\end{rem}


\section{Extensions}
\label{sec:extra}

In this section, we briefly discuss various ways in which our work in this paper can be extended.
We will return to these ideas in future work.

\subsection{Reductive groups}
\label{sub:redgps}
For simplicity, we have restricted attention in this paper to the
case when the group $G$ is semisimple.
However, in the setting of GIT, one often considers a reductive group acting on
an affine variety such that the centre does not act trivially (just consider the
action of $\GL(V)$ on the natural module $V$).
Many of our results go through under the weaker assumption that $G$ is a reductive group.
In particular, Theorem \ref{thm:kempf2.2} works for a reductive group $G$, and so the
later results that rely on it also go through.

One reason for restricting attention to the case that $G$ is semisimple is
that this facilitates our construction of the building $\Delta(K)$ of $G$ from the
set of cocharacters $Y(K)$ in Section \ref{sub:buildings}.
If $G$ is reductive but not semisimple, then the object $\Delta(K)$ we construct actually
contains a contribution from the centre of $G$ (see \cite[Sec.\ IV, Remarques]{rousseau}).
Considering convex subsets of this new object suggests a generalization of the Centre Conjecture \ref{conj:tcc}.
Our results are easily seen to go through in this case (in particular, see Theorem \ref{thm:bldgversion},
Theorem \ref{thm:partialTCC}, and the material in Section \ref{sec:gitstcc} above).

\subsection{Automorphisms of $G$}
As remarked in the previous section, if one is primarily interested
in the building of $G$, it is no real loss to assume that
$G$ is semisimple.
Further, the isogeny class of $G$ does not change the structure of the building,
so we can also assume $G$ is adjoint.
This allows us to view $G$ as a subgroup of $\Aut(G)$, the (algebraic) group of all
algebraic automorphisms of $G$.
Many of our constructions extend to give $\Aut(G)$-centres rather than $G$-centres.
The crucial observations that allow us to make this transition are that
the actions of $G$ on $Y$ and $X$ extend naturally to actions of $\Aut(G)$, and that
we can take the norm $\left\|\,\right\|$ on $Y$ in Definition \ref{def:norm}
to be $\Aut(G)$-invariant; see \cite[Sect.~7]{rich1}.
The functoriality of our constructions under the action of $G$ noted in Remark \ref{rem:funct}
extends to $\Aut(G)$-functoriality.

These facts allow us to extend our results about $G$-centres to results about $\Aut(G)$-centres
without much effort.
In particular, under the assumption that $G$ is semisimple and adjoint, we can suitably modify
Theorems \ref{thm:partialTCC}, \ref{thm:precise}, \ref{thm:tcc},
and \ref{thm:apt}
so that they provide $\Aut(G)$-centres for the subsets $\Sigma$ involved.
This is another step towards the full version of Conjecture \ref{conj:tcc} for the buildings $\Delta(K)$ in this paper,
as $\Aut \Delta(K)$ is made up of $\Aut(G)$ together with field automorphisms, \cite[Sec.\ 5]{tits1}.
See also \cite{BMR:tits} and \cite{GIT} for constructions involving $\Aut(G)$.

\subsection{Field automorphisms}\label{sub:fieldauts}
It is clear from the existing literature that our optimality
constructions behave well with respect to the induced
action of Galois groups; see \cite{kempf}, \cite{rousseau}, \cite{He}, \cite{GIT}.
More precisely, let $k$ be a field and let $G$ be defined over $k$.
Let $\Gamma$ denote the Galois group $\Gal(k_s/k)$,
where $k_s$ denotes the separable closure of $k$ in its algebraic closure.
Then $\Gamma$ also acts on the set of cocharacters $Y$ of $G$
and we can ensure that the norm is invariant under this action, cf.\ \cite[Def.\ 4.1]{GIT}.
Following \cite[5.7.1]{tits1}, any $\gamma \in \Gamma$ induces an automorphism of the building $\Delta(K)$,
and the $\Gamma$-invariance of the norm ensures that, where this makes sense, the $G$-centres we find in this paper
are also $\Gamma$-invariant.
We can thus make further progress towards proving the existence of $\Gamma$-centres of convex subsets of $\Delta(K)$.

\subsection{Non-algebraically closed fields}
We now indicate briefly how our work carries over to the case of a non-algebraically closed field.
Let $k$ be an arbitrary field and let $\ovl{k}$ be the algebraic closure of $k$.
Let $G$ be a semisimple algebraic group defined over $k$.
If $H$ is a subgroup of $G$, then we denote by $Y_{H,k}$ the subset of $Y_H$ consisting of the $k$-defined cocharacters of $H$,
and by $X_{H,k}$ the subset of $X_H$ consisting of the $k$-defined characters of $H$.
We write simply $Y_k$ for $Y_{G,k}$ and $X_k$ for the union of the $X_{T,k}$ as $T$ runs over the ($k$-defined) maximal tori of $G$.
We have a spherical building $\Delta_k= \Delta_{G,k}$ associated to $G$:
the simplices correspond to the $k$-defined parabolic subgroups of $G$, ordered by reverse inclusion,
and the apartments correspond to the maximal $k$-split tori of $G$ \cite{tits1}.
It is clear from this combinatorial description that $\Delta_k$ is a subbuilding of $\Delta$.
This inclusion holds at the level of geometric realizations as well, as we now explain.
The construction of $Y(\QQ)$ and $Y(\RR)$ carries over without problems to give spaces $Y_k(\QQ)$ and $Y_k(\RR)$,
and we can construct vector buildings $V_k(\QQ)$ and $V_k(\RR)$ and spherical buildings $\Delta_k(\QQ)$ and $\Delta_k(\RR)$.
The obvious maps from $Y_k(K)$ to $Y(K)$, $V_k(K)$ to $V(K)$ and $\Delta_k(K)$ to $\Delta(K)$ are all inclusions.
To see this, note that if $P$ is a $k$-defined parabolic subgroup of $G$, then there exists $\lambda\in Y_k$ such that
$P= P_\lambda$ \cite[15.1.2(ii)]{spr2}, and there is a maximal $k$-split torus $T$ of $G$ such that $\lambda\in Y_k(T)$;
moreover, if $x\in P_\lambda$ (resp.\ $x\in R_u(P_\lambda)$) such that $u\cdot\lambda$ is $k$-defined, then there exists $y\in P_\lambda(k)$ (resp.\ $y\in R_u(P_\lambda)(k)$) such that $x\cdot \lambda= y\cdot \lambda$.

If $G$ is $k$-split, then any maximal $k$-split torus $T$ of $G$ is $k$-split.
In this case, the notions and properties of states and quasi-states carry over in a natural way,
since every character and cocharacter of $T$ is then automatically $k$-defined.
For problems involving centres, one can often reduce to this case.
One takes a Galois extension $k'/k$ and works in the spherical building $\Delta_{k'}$;
having found a natural centre $c\in \Delta_{k'}$, one then shows that $c$ is invariant under
${\rm Gal}(k'/k)$ and deduces that $c$ lies in the subbuilding $\Delta_k$ of $\Delta_{k'}$
(cf.\ Section \ref{sub:fieldauts} and the proof of Theorem \ref{thm:perfect}).

Many of our results carry over to the non-algebraically closed case with minor modifications.
Some subtleties occur, however, when considering the results from Section \ref{sec:gitstcc}.
These were studied in \cite{GIT}, but in the language of cocharacters and GIT.
Here we translate them into the language of buildings.
Let $G$ act on an affine variety $A$ over $k$ and let $U\subseteq A$.
We define subsets $D_{A,U,k}(K)$ of $Y_k(K)$ and $E_{A,U,k}(K)$ of $\Delta_k(K)$ by
$D_{A,U,k}(K)= D_{A,U}(K)\cap Y_k(K)$ and $E_{A,U,k}(K)= E_{A,U}(K)\cap \Delta_k(K)= \zeta(D_{A,U,k}(K)\setminus\{0\})$.

Hesselink formulated Kempf's results over an arbitrary field.  In particular, he proved the following theorem, which generalizes \cite[Thm.~3.4]{kempf} (cf.\ \cite[Thm.~4.5]{He}, \cite[Thm.~5.5]{GIT}).

\begin{thm}
\label{thm:kempforig}
 Let $A$ be an affine $G$-variety over $k$ and let $x\in A(k)$.
 Suppose there exists $\lambda\in Y_k$ such that $\lambda$ properly destabilizes $x$.
 Then $E_{A,x,k}(\QQ)$ has a $G(k)$-centre.
\end{thm}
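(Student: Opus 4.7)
The plan is a Galois descent argument reducing to the $\bar{k}$-machinery already developed. Set $\Gamma := \Gal(k_s/k)$ and $S := \overline{G\cdot x}\setminus G\cdot x$, so that the hypothesis on $\lambda$ says $\{x\}$ is properly uniformly $S$-unstable over $\bar k$. Proposition \ref{prop:kempfstate} then supplies bounded admissible integral quasi-states $\Xi$ and $\Upsilon$ with the stated properties, and Theorem \ref{thm:kempf2.2} produces the optimal class $\Lambda := \Lambda(\Xi,\Upsilon)$ together with the associated parabolic $P := P(\Xi,\Upsilon)$. Setting $c := \zeta(\Lambda)\in \Delta(\QQ)$, Corollary \ref{cor:admissiblecase} combined with the inclusion $N_G(E_{A,x}(\QQ)) = C_G(\Xi) \subseteq C_G(\Upsilon)$ from Proposition \ref{prop:kempfstate}(i) and (iii) shows that $c$ is a $G$-centre of $E_{A,x}(\QQ)$.

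Next I would verify that $c$ in fact lies in $E_{A,x,k}(\QQ)$. Because $x \in A(k)$ and $S$ is $\Gamma$-stable, and because the norm may be chosen $\Gamma$-invariant, $\Xi$ and $\Upsilon$ are fixed by $\Gamma$. Applying Remark \ref{rem:funct} in $\Gamma$-equivariant form (cf.\ Section \ref{sub:fieldauts}), $\Lambda$ and $P$ are $\Gamma$-stable, so $c$ is $\Gamma$-fixed and $P$ is $k$-defined. Fix a maximal torus $T$ of $P$ defined over $k$; by Theorem \ref{thm:kempf2.2}(b)(iv) and (v) there is a unique $\lambda_T \in \Lambda \cap Y_T(\QQ)$, and uniqueness together with the $\Gamma$-stability of $\Lambda$ and $T$ forces $\lambda_T \in Y_{T,k}(\QQ) \subseteq Y_k(\QQ)$. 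Since $\mu(\Upsilon,\lambda_T) > 0$, Proposition \ref{prop:kempfstate}(ii) places a positive integer multiple of $\lambda_T$ in $|A,x|_S \cap Y_k$, so $\lambda_T \in D_{A,x,k}(\QQ)\setminus\{0\}$ and hence $c = \zeta(\lambda_T) \in E_{A,x,k}(\QQ)$.

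The main obstacle is showing that $c$ is fixed by all of $N_{G(k)}(E_{A,x,k}(\QQ))$, not merely by $N_{G(k)}(E_{A,x}(\QQ))$. Since $c$ is already a $G$-centre of the ambient set $E_{A,x}(\QQ)$, it suffices to establish the inclusion $N_{G(k)}(E_{A,x,k}(\QQ)) \subseteq N_G(E_{A,x}(\QQ))$. My plan is to prove that the $k$-rational cone $D_{A,x,k}(\QQ)$ determines its $\bar k$-extension $D_{A,x}(\QQ)$: for each maximal torus $T$ of $G$ defined over $k$, the cone $D_{A,x}(\QQ)\cap Y_T(\QQ)$ is polyhedral and of finite type by Lemma \ref{lem:ThetaAU}, and I would argue it is the $\Gamma$-saturation of its $k$-rational part $D_{A,x,k}(\QQ)\cap Y_{T,k}(\QQ)$, using polyhedrality together with a standard descent argument on extremal rays. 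Reducing arbitrary apartments to $k$-defined ones via $G(k_s)$-conjugation, this implies that any $g \in N_{G(k)}(E_{A,x,k}(\QQ))$ normalizes $D_{A,x}(\QQ)$, hence normalizes $E_{A,x}(\QQ)$, and consequently fixes $c$. The delicate part will be controlling the apartments attached to non-$k$-defined tori in the non-split case, and in particular ensuring that the $\Gamma$-averaging does not lose any extremal rays; this is the main technical hurdle and is precisely where any full writeup will need the most care.
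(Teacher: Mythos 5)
The paper's proof is a one-liner and takes a genuinely different route from yours. It regards $\Sigma = E_{A,x,k}(\QQ)$ itself as a convex polyhedral subset of $\Delta(\QQ) = \Delta_{\ovl k}(\QQ)$ and applies the algebraically closed machinery (Theorem \ref{thm:kempf2.2} via Proposition \ref{prop:kempfstate} and the results of Section \ref{sec:buildings}) directly to this set, producing a $G$-centre of $E_{A,x,k}(\QQ)$. The step from $G$-centre to $G(k)$-centre is then trivial: $G(k)\subseteq G$ gives $N_{G(k)}(E_{A,x,k}(\QQ))\subseteq N_G(E_{A,x,k}(\QQ))$ with no further work. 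In particular the paper never compares the normalizers $N_{G(k)}(E_{A,x,k}(\QQ))$ and $N_G(E_{A,x}(\QQ))$, and never needs to descend a centre from the larger set $E_{A,x}(\QQ)$ into the smaller one.

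That comparison of normalizers is exactly where your argument stalls, and it is a genuine gap. You run the optimization on $E_{A,x}(\QQ)$, obtain a $G$-centre $c$, and then must show $N_{G(k)}(E_{A,x,k}(\QQ))\subseteq N_G(E_{A,x}(\QQ))$ to conclude that $c$ is a $G(k)$-centre of the \emph{smaller} set. You correctly flag this as the hurdle. It does not follow from polyhedrality plus $\Gamma$-stability: a $\Gamma$-stable cone in $Y(\QQ)$ is not determined by its intersection with $Y_k(\QQ)$ (for a non-$k$-split torus $T$ one can have $Y_{T,k}=\{0\}$ while $Y_T\neq 0$, so extremal rays of $D_{A,x}(\QQ)_T$ need not be ``visible'' over $k$), and the proposed descent argument on extremal rays runs into precisely the non-split obstruction you name. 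Your first two paragraphs — producing $c$, showing $c$ is $\Gamma$-fixed, and using Theorem \ref{thm:kempf2.2}(b)(iv)--(v) to show $c$ lies in $\Delta_k(\QQ)$ and indeed in $E_{A,x,k}(\QQ)$ — are sound, but unnecessary once you adopt the paper's viewpoint: optimize over $E_{A,x,k}(\QQ)$ from the start, take $\Xi$ with $C_G(\Xi)=N_G(E_{A,x,k}(\QQ))$ as in Lemma \ref{lem:qstate2} and Theorem \ref{thm:bldgversion}, and then $G$-centre $\Rightarrow G(k)$-centre for free.
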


The proof follows immediately from Theorem \ref{thm:kempf2.2} and Proposition \ref{prop:kempfstate} in the algebraically closed case:
we simply note that we may regard $E_{A,x,k}(\QQ)$ as a convex subset of $\Delta(\QQ)= \Delta_{\ovl{k}}(\QQ)$ and that a $G$-centre of
$E_{A,x,k}(\QQ)$ is also a $G(k)$-centre of $E_{A,x,k}(\QQ)$.

This is not, however, an exact counterpart of \cite[Thm.~3.4]{kempf} for an arbitrary $k$.
We see this most easily using the language of buildings.
Let $A,x$ be as in Theorem \ref{thm:kempforig} and let $\lambda\in Y_k$ such that $\lambda$ properly destabilizes $x$.
It follows easily using \cite[Lem.~2.12]{GIT} that $\zeta(\lambda)$ has no opposite in $E_{A,x}(\QQ)$.
This implies that $E_{A,x}(\QQ)$ is not $\Delta(\QQ)$-cr and that $E_{A,x,k}(\QQ)$ is not $\Delta_k(\QQ)$-cr.
There exist examples of $A$ and $x$ where $E_{A,x,k}(\QQ)$ is not $\Delta_k(\QQ)$-cr but $E_{A,x}(\QQ)$ is $\Delta(\QQ)$-cr
(cf.\ \cite[Rem.~5.10]{GIT}).
It is natural to ask whether the weaker hypothesis that $E_{A,x,k}(\QQ)$ is not $\Delta_k(\QQ)$-cr alone implies that $E_{A,x,k}(\QQ)$ has a $G(k)$-centre; this would be the most natural extension of \cite[Thm.~3.4]{kempf} to an arbitrary field.
We do not know the answer in general, but we finish by giving the answer in the case of a perfect field.

\begin{thm}
\label{thm:perfect}
 Suppose $k$ is perfect.
 Let $A$ be an affine $G$-variety over $k$ and let $x\in A(k)$.
  If $E_{A,x,k}(\QQ)$ is not $\Delta_k(\QQ)$-completely reducible, then $E_{A,x,k}(\QQ)$ has a $G(k)$-centre.
\end{thm}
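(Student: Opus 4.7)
The strategy is to reduce Theorem \ref{thm:perfect} to Theorem \ref{thm:kempforig} by producing a $k$-defined cocharacter that properly destabilizes $x$. Once such a cocharacter is in hand, Theorem \ref{thm:kempforig} applies directly and yields the desired $G(k)$-centre of $E_{A,x,k}(\QQ)$.

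By hypothesis, there exists $\lambda \in Y_k$ with $\zeta(\lambda) \in E_{A,x,k}(\QQ)$ admitting no opposite in $E_{A,x,k}(\QQ)$. The plan is to show that $\lambda$ itself properly destabilizes $x$. Suppose for contradiction that this fails, so that $y_0 := \lim_{a\to 0}\lambda(a)\cdot x$ lies in $G\cdot x$. Then standard GIT arguments (using, e.g., the ideas of \cite[Lem.~2.12]{GIT}) applied over the algebraically closed field $\ovl{k}$ produce an element $u \in R_u(P_\lambda)(\ovl{k})$ such that the cocharacter $\mu := -u\cdot\lambda \in Y$ destabilizes $x$ and satisfies $\zeta(\mu)$ opposite to $\zeta(\lambda)$ in $E_{A,x}(\QQ)$. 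The remaining task is to upgrade $u$ to a $k$-rational element.

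The upgrade step is precisely where perfectness of $k$ is used. Since $\lambda \in Y_k$, the parabolic $P_\lambda$ and its unipotent radical $R_u(P_\lambda)$ are $k$-defined, and for perfect $k$ the unipotent group $R_u(P_\lambda)$ is $k$-split. The set $Z \subseteq R_u(P_\lambda)$ consisting of all $u$ such that $-u\cdot \lambda$ destabilizes $x$ is a $k$-defined subvariety, non-empty over $\ovl{k}$ by the previous paragraph. Combining the vanishing of Galois cohomology for $k$-split unipotent groups with a suitable descent structure on $Z$ (inherited from the $R_u(P_\lambda)$-action on the fibre of opposites of $\zeta(\lambda)$), one concludes that $Z(k) \neq \varnothing$. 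Choosing $u_0 \in Z(k)$, the cocharacter $-u_0 \cdot \lambda \in Y_k$ provides a $k$-defined opposite of $\zeta(\lambda)$ in $E_{A,x,k}(\QQ)$, contradicting the choice of $\lambda$. Hence $\lambda$ properly destabilizes $x$, and Theorem \ref{thm:kempforig} finishes the proof.

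The main obstacle is the Galois descent step establishing $Z(k)\neq\varnothing$: perfectness must be used in a non-trivial way, since the counterexamples of \cite[Rem.~5.10]{GIT} show that the analogous descent genuinely fails over imperfect fields, so one cannot appeal only to the geometric existence of $u$. Making the relevant torsor structure on $Z$ explicit, or refining $Z$ via a filtration of $R_u(P_\lambda)$ by $\mathbb{G}_a$-extensions (e.g., a root-group filtration), appears to be the cleanest route to the required descent.
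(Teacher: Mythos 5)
Your proposal has the same overall shape as the paper's proof: both reduce the theorem to Theorem \ref{thm:kempforig} by showing the destabilizing cocharacter $\lambda$ with no opposite actually \emph{properly} destabilizes $x$. The difference is one of framing. The paper argues directly: starting from the no-opposite hypothesis it shows $x'$ is not $R_u(P_\lambda)(k)$-conjugate to $x$, then invokes \cite[Thm.~3.1]{GIT} (which says that for perfect $k$, if two $k$-points of a $k$-defined affine variety acted on by a $k$-defined unipotent group are conjugate over $\ovl{k}$, they are already conjugate over $k$) to conclude $x'$ is not $R_u(P_\lambda)(\ovl{k})$-conjugate to $x$, and then \cite[Thm.~3.3]{GIT} to upgrade to $G$-conjugacy. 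You argue by contradiction and describe the descent step (your ``main obstacle'') as a torsor/Galois-cohomology argument for the $k$-defined subvariety $Z\subseteq R_u(P_\lambda)$.

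The gap you flag is real, but it is exactly the content of the result you should be citing: \cite[Thm.~3.1]{GIT} is precisely the statement that $Z(\ovl{k})\neq\varnothing$ forces $Z(k)\neq\varnothing$ when $k$ is perfect (in the form ``conjugate over $\ovl{k}$ implies conjugate over $k$ for $k$-defined unipotent groups acting on $k$-defined affine varieties''). Your sketch of the torsor structure on $Z$ is essentially the right picture — $Z$ is a coset of the stabilizer $C_{R_u(P_\lambda)}(x')$ where $x' = \lim_{a\to 0}\lambda(a)\cdot x \in A(k)$ — but carrying the cohomology argument through carefully requires dealing with possible non-reducedness of stabilizers in positive characteristic and with the filtration of $R_u(P_\lambda)$ by $\mathbb{G}_a$-quotients, which is what \cite[Thm.~3.1]{GIT} already packages. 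You also implicitly use a second ingredient, the passage from $G$-conjugacy to $R_u(P_\lambda)$-conjugacy of the limit point, which is \cite[Thm.~3.3]{GIT} (not just \cite[Lem.~2.12]{GIT}); this should be cited explicitly as well. With those two citations in place your argument closes, and is logically the contrapositive of the proof in the paper.
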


\begin{proof}
 Suppose $E_{A,x,k}(\QQ)$ is not $\Delta_k(\QQ)$-completely reducible.
 Then there exists $\lambda\in Y_k$ such that $\lambda$ destabilizes $x$ and $\zeta(\lambda)$ has no opposite in $E_{A,x,k}(\QQ)$.
 Set $x':= \lim_{a\ra 0} \lambda(a)\cdot x$.
 Let $u\in R_u(P_\lambda)(k)$.
 Then $-(u\cdot\lambda)$ does not destabilize $x$, so $u\cdot \lambda$ does not fix $x$,
 so $x'\neq u^{-1}\cdot x$, by \cite[Lem.~2.12]{GIT}.
 Hence $x'$ is not $R_u(P_\lambda)(k)$-conjugate to $x$.
 Since $k$ is perfect, \cite[Thm.~3.1]{GIT} implies that $x'$ is not $R_u(P_\lambda)$-conjugate to $x$.
 Hence by \cite[Thm.~3.3]{GIT}, $x'$ is not $G$-conjugate to $x$.
 This means that $\lambda$ properly destabilizes $x$.
 The result now follows from Theorem \ref{thm:kempforig}.
\end{proof}

\bigskip
{\bf Acknowledgements}:
The authors acknowledge the financial support of
Marsden Grant UOC0501, The Royal Society and
the DFG-priority program SPP1388 ``Representation Theory''.
Parts of this paper were written during a stay of the
authors at the Isaac Newton Institute for Mathematical
Sciences in Cambridge during the ``Algebraic Lie Theory'' Programme in 2009,
and during a stay of the first and third authors at the
Max Planck Institute for Mathematics in Bonn in 2010.
We are indebted to Rudolf Tange for helpful discussions.


\end{document}